\numberwithin{equation}{section}
\newtheorem{theorem}{Theorem}[section]
\newtheorem{lemma}[theorem]{Lemma}
\newtheorem{proposition}[theorem]{Proposition}
\newtheorem{remark}[theorem]{Remark}
\newtheorem{definition}[theorem]{Definition}
\def\d{\dagger}
\def\sgn{\rm sgn}
\newcommand{\R}{\mathbb{R}}
\newcommand{\e}{\varepsilon} 
\newcommand{\F}{\mathbf{F}}
\def\d{\text{d}}
\newcommand{\ba}{\mathbf{a}}
\newcommand{\K}{{\mathbb K}}
\newcommand{\beq}{\begin{equation}}
\newcommand{\eeq}{\end{equation}}
\renewcommand{\div}{\mbox{div}\,}
\newcommand{\A}{\mathbf{A}}
\newcommand{\beqs}{\begin{equation*}}
\newcommand{\eeqs}{\end{equation*}}
\begin{document}
\title[Calder\'on-Zygmund estimates  
for parabolic equations]{ Interior Calder\'on-Zygmund estimates  for solutions to general parabolic equations of $p$-Laplacian type}

\author{Truyen Nguyen}
\address{Department of Mathematics, The University of Akron, Akron, OH 44325, USA}
\email{tnguyen@uakron.edu}
\thanks{The research of the  author is supported in part
by a grant  from the Simons Foundation (\# 318995)} 
\subjclass[2010]{35B45, 35B65, 35K65, 35K67, 35K92}
\keywords{parabolic equations, $p$-Laplacian, gradient estimates, Calder\'on-Zygmund estimates,  compactness arguments, intrinsic geometry}


\maketitle

\begin{abstract} 
We study  general parabolic equations of the form $u_t  =  \div \A(x,t, u,D u)  +\div(|\mathbf{F}|^{p-2} \mathbf{F})+ f$
whose principal part depends on the solution itself. The vector field $\A$ is assumed to have small mean oscillation in $x$,   
measurable in $t$, Lipschitz continuous in $u$, and its growth in $Du$ is like the $p$-Laplace operator.
We establish interior Calder\'on-Zygmund  estimates for  locally bounded weak solutions to 
the equations when $p>2n/(n+2)$. This is achieved by employing a perturbation method together with developing a 
two-parameter technique and a new compactness argument.  We also make
crucial use  of the intrinsic geometry method by DiBenedetto \cite{D2} and the maximal function free approach by Acerbi and Mingione \cite{AM}.
\end{abstract}
\section{Introduction}
Let  $n\geq 2$ and $Q_6 = B_6(0) \times (-36,36)\subset \R^n\times \R$
be the standard parabolic cylinder centered at the 
origin. The  primary purpose of this paper is to
investigate interior spatial gradient estimates of Calder\'on-Zygmund type for  weak solutions to  quasilinear parabolic equations of the form
\begin{equation}\label{mainPDE}
u_t  =  \div \A(z, u,D u)  +\div(|\mathbf{F}|^{p-2} \mathbf{F})+ f \quad \text{in}\quad Q_6
\end{equation} 
with $z=(x,t)\in Q_6$,  $\mathbf{F}: Q_6 
\to \R^n$,  and  $f: Q_6 
\to \R$.
Let  $\K\subset \R$ be an open interval and consider general vector field
\[
\A = \A(z,u,\xi) : Q_6\times \overline\K\times \R^n \longrightarrow \R^n
\]
which is  a Carath\'eodory map, that is, $\A(z,u,\xi)$ is measurable in $z$ for every $(u,\xi)\in \overline \K \times \R^n$ and continuous 
in $(u,\xi)$ for a.e. $z\in Q_6$. We assume that there exist constants
$\Lambda>0$ and  $1< p<\infty$ such that $\A$  satisfies the following  structural conditions for a.e. $z\in Q_6$, all $ u\in \overline\K$, and all $\xi,\eta\in\R^n$:
\begin{align}
&\big\langle  \A(z,u,\xi) -\A(z,u,\eta), \xi-\eta\big\rangle \geq 
\left \{
\begin{array}{lcll}
  \Lambda^{-1} |\xi-\eta|^p &\text{if}\quad p\geq 2,\\
\Lambda^{-1} \big(1+ |\xi| +|\eta|)^{p-2} |\xi-\eta|^2 &\quad\,\,\text{ if}\quad 1 <p<2,
\end{array}\right. 
\label{structural-reference-1}\\ 
& |\A(z,u,\xi)|  \leq \Lambda (1 + |\xi|^{p-1}),\label{structural-reference-2}\\
&|\A(z,u_1,\xi)-\A(z,u_2,\xi)|  \leq \Lambda |u_1 -u_2|\,\, 
\big(1+ |\xi|^{p-1}\big) \qquad\qquad\quad\qquad \,\,\,\,\forall u_1, u_2\in \overline\K.\label{structural-reference-3}
\end{align}

 The class of equations of the form \eqref{mainPDE} with $\A$ satisfying \eqref{structural-reference-1}--\eqref{structural-reference-3}
 contains the well-known parabolic $p$-Laplace equations. More generally, it includes those of the form 
 \begin{equation}\label{PDE}
u_t  =  \div \big(\ba(x,t)|D u|^{p-2} Du \big)  +\div(|\mathbf{F}|^{p-2} \mathbf{F}) \quad \text{in}\quad Q_6.
\end{equation} 
The  regularity theory for weak solutions of \eqref{PDE} 
is well developed \cite{AM,Bo,BOR,BW1,D2,DF1,DF2,KiL,KuM1,KuM2,La,Mi1,Mi2}. In particular, Calder\'on-Zygmund-type estimates for
\eqref{PDE} were derived in \cite{AM,Mi2} exploiting the essential fact that the equation is  invariant  with respect to the so-called intrinsic geometry \cite{D2}. These generalize previous   results obtained
 for elliptic equations of $p$-Laplacian type \cite{CP,DM,I,KZ}.  
However, there is a great difficulty in studying \eqref{PDE} compared to
its elliptic 
counterpart since it
scales differently in space and time and as a result 
there is no natural maximal function associated to \eqref{PDE} when $p\neq 2$.
To handle this problem, a new  and important maximal function free approach
was developed by Acerbi and Mingione \cite{AM}.
These other key ingredients used in  \cite{AM}  are  the localization method introduced by Kinnunen and Lewis \cite{KiL} and the celebrated $L^\infty$ estimates due to 
DiBenedetto and Friedman \cite{DF1} 
for spatial gradients of solutions to the frozen homogeneous equations. The result and method  in \cite{AM} were
extended further  in recent articles \cite{Bo,BOR} to cover equations of the form $u_t  =  \div \A(x,t,Du) 
+\div(|\mathbf{F}|^{p-2} \mathbf{F}) + f$.

The aim of this paper is to address Calder\'on-Zygmund-type estimates for a new class of parabolic equations whose  principal parts
are allowed to depend on the $u$ variable. We study  general parabolic equations of the form \eqref{mainPDE} which includes equations
describing $p$-harmonic flows. It is worth pointing out that this class of equations is not invariant with respect to the intrinsic geometry 
due to the dependence of $\A$ on $u$. Nevertheless, we are able to establish the following  main result about $L^q$ estimates for $Du$. Hereafter,
 we denote
 $ Q_{\bar z}(r,\theta) := B_r(\bar x)\times (\bar t-\theta, \bar t +\theta)$
 for $\bar z=(\bar x,\bar t)$. Also for a ball $B\subset \R^n$, $\A_{B}(t,u,\xi) :=\fint_{B} \A(x,t,u,\xi)\, dx$ is the average of $\A$ with respect to the $x$ variable. 
\begin{theorem}\label{thm:main2}  Let $p> 2n/(n+2)$ and  $\A : Q_6\times \overline{\K}\times  \R^n \longrightarrow \R^n$
be  a Carath\'eodory map such that $\xi\mapsto \A(z,u,\xi)$ is differentiable on $\R^n\setminus \{0\}$ for a.e. 
$z\in Q_6$ and all $u\in \overline{\K}$.
 Assume that  $\A(\cdot,\cdot,0)=0$ and  $\A$ satisfies the following  conditions for a.e. $z\in Q_6$ and  all $(u,\xi)\in 
\overline{\K}\times (\R^n\setminus \{0\})$:
\begin{equation}\label{strengthen-stru}
\left \{
\begin{array}{lcll}
\langle \partial_\xi \A(z,u,\xi) \eta, \eta\rangle 
&\geq& \Lambda^{-1} (\mu^2+|\xi|^2)^{\frac{p-2}{2}} |\eta|^2 \, \, \qquad \forall \eta\in \R^n, \\
\big|\partial_\xi \A(z,u,\xi)\big|  &\leq& \Lambda (\mu^2 + |\xi|^2)^{\frac{p-2}{2}},\\
  |\A(z,u_1,\xi)-\A(z,u_2,\xi)| &\leq &   \Lambda |u_1 - u_2| \, (\mu^2+ |\xi|^2)^{\frac{p-1}{2}}\quad \forall u_1, \, u_2\in \overline{\K}
\end{array}\right.
\end{equation}
for some constants $\Lambda>0$ and $\mu\in [0,1]$. 
Then for any 
$M_0\in (0,\infty)$ and $q>1$, there exists $\delta >0$ depending only 
on $p$, $q$, $n$, $M_0$, $\Lambda$, and $\K$ 
such that: if
\begin{equation}\label{BMO-osc}
 \sup_{\bar z=(\bar x, \bar t)\in Q_3,\,  Q_{\bar z}(r, \theta)\subset Q_6} \fint_{ Q_{\bar z}(r,\theta)} \Big[
\sup_{u\in \overline\K}\sup_{ \xi\in\R^n}\frac{|\A(x,t, u,\xi) -  \A_{B_r(\bar x)}(t,u,\xi)|}{1+ |\xi|^{p-1}}
\Big] \, dx dt\leq \delta^p  
\end{equation}
and $u$ is a weak solution to \eqref{mainPDE} with $\|u\|_{L^\infty(Q_4)}\leq M_0$, we have
\begin{align*}
\int_{Q_3} |D u|^{pq} \, dz
\leq C&\left\{1+ \Big[\int_{Q_6} (|Du|^p  + |\F|^p)\, dz +\Big(\int_{Q_6} |f|^{\frac{p(n+2)}{p(n+2)-n}}\, dz\Big)^{\hat p} \Big]^{1+ d(q-1)} 
\right.\\
&\left. \qquad\qquad\qquad\qquad +  \int_{Q_6}|\F|^{pq} \, dz +\Big(\int_{Q_6} |f|^{\frac{pq(n+2)}{p(n+2)-n}}\, dz\Big)^{\hat p} \right\}.
\end{align*}
Here  $C>0$ is a constant depending only on  $p$, $q$, $n$, $M_0$, $\Lambda$, $\K$, and 
 $d\geq 1$ and $\hat p>1$ are the numbers given by
\begin{equation}\label{de:d}
d := \left \{
\begin{array}{lcll}
 \frac{p}{2} &\text{if}\quad p\geq 2, \\
\frac{2p}{(n+2)p - 2n} &\qquad  \text{ if}\quad \frac{2n}{n+2}<p<2
\end{array}\right.
\qquad
\mbox{and}\qquad  \hat p := \frac{p(n+2)-n}{p(n+1)-n}.
\end{equation}
\end{theorem}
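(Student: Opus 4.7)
The plan is to combine a perturbation/comparison argument with an intrinsic-geometry stopping-time scheme in the spirit of Acerbi-Mingione \cite{AM}, but enriched with a two-parameter rescaling to accommodate the dependence of $\A$ on $u$. First I would pass to level sets: using the intrinsic parabolic cylinders $Q_{\bar z}(r,\lambda^{2-p}r^2)$ on which the average of $|Du|^p$ is comparable to $\lambda$, the $L^{pq}$ bound reduces to a good-$\lambda$ inequality of the form
\[
 \big|\{|Du|^p > A\lambda\}\cap Q_3\big|\leq C\,\varepsilon \big|\{|Du|^p > \lambda\}\cap Q_3\big| + \big|\{\text{data at level } B\lambda\}\cap Q_6\big|,
\]
which, after integration against $\lambda^{q-1}\,d\lambda$, yields the claimed estimate. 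The exponent $1+d(q-1)$ and the number $\hat p$ are the natural ones dictated by intrinsic scaling in the two regimes $p\geq 2$ and $p<2$, and by the parabolic Sobolev embedding needed to absorb $f$.

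The heart of the argument is the comparison step on an intrinsic cylinder $Q=Q_{\bar z}(r,\lambda^{2-p}r^2)$ produced by a Vitali-type stopping time. I would replace $u$ successively by three auxiliary functions: $w$, the solution of \eqref{mainPDE} on $Q$ with $\F\equiv 0$ and $f\equiv 0$, controlled by the energy estimates adapted to the intrinsic geometry; $v$, the solution of the equation with coefficient frozen to the spatial average $\A_{B_r(\bar x)}(t,u,\xi)$, whose closeness to $w$ is driven by the small-BMO hypothesis \eqref{BMO-osc}; and finally $\tilde v$, the solution of the homogeneous equation with $\A$ further frozen at a representative value $u_0$ of the $u$ variable, which uses the Lipschitz-in-$u$ estimate in \eqref{strengthen-stru} together with the bound $\|u\|_{L^\infty(Q_4)}\leq M_0$. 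For $\tilde v$ the DiBenedetto-Friedman $L^\infty$ gradient bound gives $\|D\tilde v\|_{L^\infty}\lesssim \lambda^{1/p}$ on a smaller cylinder, and this transfers to $|Du|$ through the three comparison estimates. The selection of the stopping-time cylinders follows the maximal-function-free procedure of \cite{AM}, exploiting the upper Lebesgue density of level sets inside intrinsic cylinders.

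The main obstacle, and the reason that two parameters enter, is the comparison with $\tilde v$: since $\A$ depends on $u$, freezing $u$ costs a factor proportional to $\mathrm{osc}_Q u$, and under intrinsic rescaling this oscillation does not scale homogeneously with $Du$. The idea is to control $\mathrm{osc}_Q u$ jointly through the radius $r$ and the intrinsic level $\lambda$, and to run the compactness argument with a separate smallness parameter for the coefficient-BMO part and for the $u$-oscillation part, so that both errors can be driven below $\varepsilon$ simultaneously. The compactness itself I would execute by contradiction: if the required smallness of $Du - D\tilde v$ fails, extract a sequence of counterexamples, rescale each intrinsically, and pass to a limit which is a weak solution of a homogeneous frozen equation with uniform $L^\infty$ gradient bound; this contradicts the failure by the DiBenedetto-Friedman estimate. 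The hypothesis $\|u\|_{L^\infty(Q_4)}\leq M_0$ is precisely what makes the Lipschitz-in-$u$ error acceptable along this limit, and it accounts for the dependence of $\delta$ and $C$ on $M_0$.

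Once this comparison scheme produces, for every $\varepsilon>0$, a $\delta>0$ making $Du$ close in $L^p$-average to a controlled gradient on each stopping-time cylinder, the iteration of the good-$\lambda$ inequality as in \cite{AM,Bo,BOR} delivers the $L^{pq}$ estimate. The exponent $1+d(q-1)$ emerges from comparing the intrinsic measure $\lambda^{2-p}r^{n+2}$ with Lebesgue measure $r^{n+2}$ when bookkeeping the gain per level, and $\hat p$ arises from the parabolic Sobolev embedding used to estimate the $f$-contribution in the intrinsic energy inequality for $u-w$.
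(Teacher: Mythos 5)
Your overall architecture (intrinsic stopping-time cylinders, maximal-function-free good-$\lambda$ iteration, a chain of comparison maps $u\to w\to v$, and a compactness argument uniform in the rescaling parameters) matches the paper's strategy, and your account of where $1+d(q-1)$ and $\hat p$ come from is correct. The genuine gap is in your final comparison step: you propose to freeze the $u$-variable at a \emph{representative constant value} $u_0$ and to control the resulting error by $\mathrm{osc}_Q u$. On the stopping-time cylinders this oscillation is not small: the radii $r_i$ produced by the exit-time argument are only bounded above by a fixed fraction of $R_2-R_1$, so on such a cylinder $\mathrm{osc}\,u$ can be of order $M_0$, and the error $\Lambda\,\mathrm{osc}(u)\,(\mu^2+|\xi|^2)^{(p-1)/2}$ from the third condition in \eqref{strengthen-stru} is then of the same order as the principal term. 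Moreover, after intrinsic rescaling the relevant quantity is $\alpha\,\mathrm{osc}(\tilde u)$ with $\alpha=\theta\hat\lambda$ ranging over all of $(0,\infty)$, so no uniform smallness is available; "controlling $\mathrm{osc}_Q u$ jointly through $r$ and $\lambda$" has no mechanism behind it, since $u$ is merely bounded and its Hölder modulus is itself intrinsic and gradient-dependent.

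The paper resolves exactly this point differently, in two ways you would need to adopt. First, in the compactness lemmas the $u$-dependence error is measured not by $\mathrm{osc}\,u$ but by $\omega(\alpha|u-w|)$ (respectively $\omega(\alpha|w-v|)$), i.e. by the $L^1$-distance between the solution and the comparison function; this is driven to zero along the contradiction sequence by a Gr\"onwall-type argument with the truncation $h_\e$, which also handles the degenerate cases $\alpha_k\to 0$ (where $u^k,w^k$ need not be bounded in $L^p$) and $\alpha_k\to\infty$. Second, and crucially, the $L^\infty$ gradient bound is \emph{not} obtained for an equation with $u$ frozen at a constant: the final frozen vector field is $\ba(x,t,\xi)=\tilde\A_{B_4}(t,\theta\hat\lambda\,\tilde v(x,t),\xi)$, which still depends on $x$ through $\tilde v$. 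The paper first proves interior H\"older continuity of $\tilde v$ (DiBenedetto's theory), so that $\ba$ is H\"older, hence Dini, continuous in $x$ with constants depending only on $n,p,\Lambda,M_0$, and then invokes the Kuusi--Mingione Lipschitz estimate (Theorem~\ref{thm:Lip-est}), which tolerates Dini $x$-dependence. The classical DiBenedetto--Friedman bound you cite does not apply directly here, so without this substitute your final step does not close.
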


\bigskip
This result generalizes the gradient estimates obtained in \cite[Theorem~1.6]{HNP1} for the case $\K=[0,1]$ and
 $\A(z, u,  Du) = (1+\alpha u)\ba(z)\, D  u$ with $\alpha>0$ being a constant.
In Theorem~\ref{thm:main2}, $\A$ is only assumed to be measurable in the time variable.
As \eqref{BMO-osc} is automatically satisfied when $x\mapsto \A(x,t,u,\xi)$  is of vanishing mean oscillation, condition
\eqref{BMO-osc}  allows $\A$ to be discontinuous in $x$. It is also well known that
some smallness condition in $x$ for $\A$ is necessary even in the linear case. On the other hand,
 we show under merely structural 
conditions \eqref{structural-reference-1}--\eqref{structural-reference-2} for $\A$  that spatial gradients of weak solutions to  \eqref{mainPDE} enjoy  the higher integrability in the sense of Elcrat and Meyers \cite{ME}
(see  Theorem~\ref{thm:higherint}). 


We prove Theorem~\ref{thm:main2}   by using a perturbation argument 
together with  the intrinsic geometry method \cite{D2}. But as \eqref{mainPDE} is not invariant with respect to this intrinsic geometry, we are led to deal with a rescaling equation which depends  on two parameters (see equation \eqref{eq-2}). Then by
employing  the localization method   \cite{KiL} and the  
maximal function free approach\cite{AM}, we demonstrate  that 
 $L^q$ estimates for $D u$ can be derived as long as  gradients of solutions to the two-parameter equation can be approximated by  bounded gradients in a fashion that is independent of the parameters (Theorem~\ref{thm:conditionalLq}).
 The remaining and key part is to prove that there exists such Lipschitz approximation property. We achieve this through a delicate compactness argument involving two scaling parameters, and by using an important
 gradient bound in \cite{KuM1} which generalizes the fundamental $L^\infty$ gradient estimate by DiBenedetto and Friedman  \cite{DF1}.
 The compactness procedure consists of two main steps and is employed to compare gradients of solutions of our two-parameter equation to those of the corresponding frozen equation. In the first step, we reduce the problem to the homogeneous case 
(Lemma~\ref{lm:compare-solution-1}). We then handle the  homogeneous equation in the second step 
(Lemma~\ref{lm:compare-solution-2}) by making use of the higher integrability stated in Theorem~\ref{thm:higherint}.
It is crucial for our purpose  that the constants $\delta$ in these two lemmas can be chosen to be independent of the  parameters. This two-parameter technique was introduced in our recent paper \cite{HNP1} where parabolic equations whose principal parts are linear in the gradient variable were considered. The technique was further extended in
\cite{NP} to deal with quasilinear elliptic equations of $p$-Laplacian type. However, the arguments 
in \cite{HNP1,NP} do not work for the equations under consideration since \eqref{mainPDE} is degenerate/singular and it scales 
differently in time and space. We overcome this by a different approach in Section~\ref{sec:AppGra} which  exploits the nature of 
evolutionary equations and Gr\"onwall type inequality. Another nice feature of this approach is that it 
works well with highly nonlinear equations and allows us to completely 
avoid using the Minty-Browder's technique as  in \cite{NP}.
As a consequence we do not need to impose any condition 
on $\A$ in the time variable except its  measurability.

 The organization of the paper is as follows. We present some basic properties of our equations  in  Section~\ref{sec:pre} where we also state Theorem~\ref{thm:higherint} about  the higher integrability of gradients. In
 Section~\ref{sec:general}, we prove Proposition~\ref{L^lestimatewithoutPDE} about  $L^l$ estimates for a general function under 
 a decay assumption for its distribution function.  In Section~\ref{sec:conditionalLq}, we formulate a
 Lipschitz approximation property and show in Theorem~\ref{thm:conditionalLq} that this property implies  $L^q$ estimates for $D u$ 
 for any $q>p$. We then verify the  Lipschitz approximation property  in Section~\ref{sec:AppGra}
 by developing a compactness argument involving 
 scaling parameters. The proof of  our main result (Theorem~\ref{thm:main2})
 is given at the end of Subsection~\ref{sub:compactness} by combining the mentioned ingredients and employing an important gradient bound from \cite{KuM1}.  Section~\ref{sec:proof-higher} is devoted to  the the proof of Theorem~\ref{thm:higherint}  about the self improving property of gradients.
  \section{Preliminary results and higher integrability}\label{sec:pre}
In this section we derive  some  elementary estimates which will be used later. We begin with a direct consequence of
 structural condition \eqref{structural-reference-1} when  $1<p<2$.
\begin{lemma}\label{simple-est} Let $1<p<2$ and 
assume that  $\A$ satisfies  \eqref{structural-reference-1}. 
 Then there exists $C_p>0$  depending only on $p$ such that:
 for any $\tau>0$,  we have for a.e. $z\in Q_6$ that
 \begin{align*}
  \tau^{\frac{2}{p} -1} (1 -\tau )|\xi -\eta|^p
\leq \tau^{\frac{2}{p}}  (1+|\xi |^p) + C_p \Lambda   \langle \A(z, u, \xi) -\A(z,u,\eta),
  \xi - \eta \rangle\quad \forall \xi,\eta\in \R^n.
   \end{align*}
\end{lemma}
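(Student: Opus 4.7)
The aim is to apply Young's inequality to split $|\xi-\eta|^p$ into one piece matching the monotonicity bound \eqref{structural-reference-1} and another that can be absorbed into $1+|\xi|^p$. I may assume $0<\tau<1$, since for $\tau\ge 1$ the left-hand side of the desired inequality is non-positive while the right-hand side is non-negative (by \eqref{structural-reference-1}), so the claim is trivial.

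First I would use the identity
\[
|\xi-\eta|^p \;=\; \bigl[(1+|\xi|+|\eta|)^{p-2}|\xi-\eta|^2\bigr]^{p/2} \cdot (1+|\xi|+|\eta|)^{p(2-p)/2},
\]
obtained by matching powers so that the exponent of $|\xi-\eta|$ sums to $p$ and that of $(1+|\xi|+|\eta|)$ sums to $0$. Since $2/p$ and $2/(2-p)$ are conjugate H\"older exponents, weighted Young's inequality with a free parameter $\lambda>0$ yields
\[
|\xi-\eta|^p \;\le\; \frac{p}{2}\,\lambda^{2/p}\,(1+|\xi|+|\eta|)^{p-2}|\xi-\eta|^2 + \frac{2-p}{2}\,\lambda^{-2/(2-p)}(1+|\xi|+|\eta|)^{p}.
\]
The elementary bound $|\eta|\le|\xi|+|\xi-\eta|$, together with the convexity of $t\mapsto t^p$ on $[0,\infty)$, then gives $(1+|\xi|+|\eta|)^p\le c_p\bigl(1+|\xi|^p+|\xi-\eta|^p\bigr)$ for a constant $c_p$ depending only on $p$.

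Next I would choose $\lambda$ so that $\tfrac{2-p}{2}\,c_p\,\lambda^{-2/(2-p)}=\tau$, i.e., $\lambda^{2/p}=C_p\,\tau^{1-2/p}$ (using the identity $2/p-1=(2-p)/p$). Substituting, moving the $\tau\,|\xi-\eta|^p$ contribution to the left, and using $1-\tau>0$ to keep the sign correct produces
\[
(1-\tau)\,|\xi-\eta|^p \;\le\; C_p\,\tau^{1-2/p}\,(1+|\xi|+|\eta|)^{p-2}|\xi-\eta|^2 + \tau\,(1+|\xi|^p).
\]

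Finally, multiplying by $\tau^{2/p-1}$ and invoking \eqref{structural-reference-1} in the form $(1+|\xi|+|\eta|)^{p-2}|\xi-\eta|^2\le \Lambda\,\langle \A(z,u,\xi)-\A(z,u,\eta),\,\xi-\eta\rangle$ delivers the claimed inequality with a constant depending only on $p$. No step is a genuine obstacle; the only care required is bookkeeping the powers of $\tau$ and $\lambda$ so that the resulting left-hand factor is exactly $\tau^{2/p-1}(1-\tau)$ and the absorbed coefficient on the right is exactly $\tau^{2/p}$, which is just the algebraic identity $2/p-1=(2-p)/p$ used above.
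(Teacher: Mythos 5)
Your proposal is correct and follows essentially the same route as the paper: the same splitting of $|\xi-\eta|^p$ via Young's inequality with conjugate exponents $2/p$ and $2/(2-p)$, the same choice of the free parameter to produce the exact powers $\tau^{2/p-1}$ and $\tau^{2/p}$, and the same absorption of the $\tau|\xi-\eta|^p$ term into the left-hand side before invoking \eqref{structural-reference-1}. The only cosmetic difference is that the paper first rewrites the monotonicity bound in terms of $(1+|\xi|+|\xi-\eta|)$ while you keep $(1+|\xi|+|\eta|)$ and apply the triangle inequality only to the additive term; both are fine.
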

\begin{proof} Let $\xi,\eta\in\R^n$. Since $|\xi|+|\eta|\leq 2|\xi| +|\xi-\eta|$ and $1<p<2$, we have from \eqref{structural-reference-1} that
\begin{equation}\label{equiv-structural-1}
\langle  \A(z,u,\xi) -\A(z,u,\eta), \xi-\eta\rangle \geq \Lambda^{-1} 2^{p-2} \big(1+ |\xi|+ |\xi-\eta|)^{p-2} |\xi-\eta|^2.
\end{equation}
Using Young's inequality,  we obtain
 \begin{align*}
 &|\xi -\eta|^p  =\big( 1+ |\xi| + |\xi-\eta| \big)^{\frac{p(2-p)}{2}} \big( 1+ |\xi| + |\xi-\eta|\big)^{\frac{p(p-2)}{2}}  
 |\xi -\eta|^p  \\
 &\leq \tau \, 3^{-p} \big( 1+  |\xi| + |\xi-\eta|\big)^p 
 + C_p \tau^{\frac{p-2}{p}} \big(1+  |\xi| + |\xi-\eta|\big)^{p-2}  |\xi-\eta|^2.  
   \end{align*}
   This together with \eqref{equiv-structural-1} yields the conclusion of  the lemma.
\end{proof}
Let us next  introduce some notations that will be used throughout the paper.
For $\bar z=(\bar x,\bar t)$ and $r, \, \theta>0$, we define
 $ Q_{\bar z}(r,\theta) := B_r(\bar x)\times (\bar t-\theta, \bar t +\theta)$
 and 
\begin{equation}\label
{intrinsiccylinder}
Q_{r}^\lambda(\bar z) := \left \{
\begin{array}{lcll}
 B_r(\bar x) \times  (\bar t-\lambda^{2-p} r^2, \bar t+ \lambda^{2-p} r^2) &\text{if}\quad p\geq 2, \\
B_{\lambda^{\frac{p-2}{2}} r}(\bar x) \times  (\bar t- r^2, \bar t+ r^2)&\qquad \text{if}\quad 1<p<2.
\end{array}\right.
\end{equation}
Also,  $Q_{r}(\bar z) := B_r(\bar x) \times  (\bar t- r^2, \bar t+  r^2)$. 
  For simplicity, we will always  write $B_r$ for $B_r(0)$ and $Q_r$ for $Q_{r}(0)$. The 
  cylinders $Q_{r}(\bar z)$ and $Q_{r}^\lambda(\bar z)$ shall be called standard parabolic cylinder 
  and {\it intrinsic cylinder}, respectively. In addition, $\partial_p Q_r$ denotes the standard parabolic
  boundary of $Q_r$.
 \subsection{Weak solutions}
\begin{definition}[weak solutions] Let $\A$ satisfy \eqref{structural-reference-1}--\eqref{structural-reference-2}. 
Assume that $\alpha>0$, $Q_{\bar z}( r, \theta)\subset Q_6$, $f\in L^1(Q_{\bar z}( r, \theta))$, and  
$\F\in L^p(Q_{\bar z}( r, \theta))$. 
 A map
 \[
  u\in C(\bar t -\theta , \bar t +\theta; L^2(B_r))\cap L^p(\bar t - \theta, \bar t +\theta; W^{1,p}(B_r)) 
 \]
 is called a weak solution to   equation
 \begin{equation}\label{generalPDE}
u_t  =  \div \A(z, \alpha u,D u)   +\div(|\mathbf{F}|^{p-2} \mathbf{F}) +f \quad \text{in}\quad Q_{\bar z}( r, \theta)
\end{equation} 
 if $\, u(z) \in \frac1\alpha \, \overline{\K}\,$ for a.e. $z\in Q_{\bar z}( r, \theta)$ and 
 \begin{equation}\label{weak-1}
  \int_{Q_{\bar z}( r, \theta)} u \varphi_t\,dz = \int_{Q_{\bar z}( r, \theta)} \langle \A(z, \alpha u,D u) +  |\mathbf{F}|^{p-2}
  \mathbf{F}, D \varphi\rangle\,dz
  -\int_{Q_{\bar z}( r, \theta)} f \varphi\,dz \quad \forall \varphi\in C_0^\infty(Q_{\bar z}( r, \theta)).
 \end{equation}
\end{definition}
Weak solutions to  \eqref{generalPDE}
 possess a modest degree of regularity in the time variable.
In order to work with test functions involving the solution itself, it is therefore convenient to 
adopt the formulation  in terms of the so-called Steklov averages.
For $g\in L^1(Q_{\bar z}( r, \theta))$ and $0<h<\bar t +\theta$, we define the Steklov average $[g]_h$ of $g$ by 
\begin{equation}\label{Steklov-ave}
[g]_h(x,t) := \left \{
\begin{array}{lcll}
 \frac{1}{h} \int_t^{t+h} g(x,s)\, ds &\qquad\text{ for}\quad t\in (\bar t -\theta, \bar t +\theta-h], \\
0 &\text{for}\quad t> \bar t + \theta -h.
\end{array}\right.
\end{equation}
Then if $f\in L^l(Q_{\bar z}( r, \theta))$ for some $l>\frac{pn}{p(n+1)-n}$, we have that
\eqref{weak-1} is equivalent to 
\begin{equation}\label{weak-2}
  \int_{B_r(\bar x)\times \{t\}} \partial_t [u]_h \phi\,dx = - \int_{B_r(\bar x)\times \{t\}} \langle 
  [\A(z, \alpha u,D u)]_h +[|\mathbf{F}|^{p-2} \mathbf{F}]_h, D \phi\rangle\,dx
 + \int_{B_r(\bar x)\times \{t\}} [f]_h \phi\,dx 
 \end{equation}
 for all $0<t<\bar t +\theta -h$ and all $\phi \in W_0^{1,p}(B_r(\bar x))$.
 
\subsection{Scaling properties} The following result displays scaling properties of our equation.
\begin{lemma}\label{lm:scaling}
Assume that $p>1$. Suppose  $u$ is a weak solution to  equation
\begin{equation}\label{eq-1}
u_t  =  \div \A(x,t, u,D u)  +\div(|\F|^{p-2} \F) + f\quad \text{in}\quad Q_{\theta r}^\lambda(\bar z).
\end{equation}
For $p\geq 2$, we define
\begin{equation*}
 \left \{
\begin{array}{lcll}
 \tilde u(x,t) := \frac{u(\bar x + \theta x, \, \bar t + \lambda^{2-p} \theta^2 t)}{\theta \lambda},
 \quad \tilde \A(x,t, u, \xi) := \frac{\A(\bar x + \theta x, \, \bar t + \lambda^{2-p} \theta^2 t, \, u, \, \lambda \xi)}{\lambda^{p-1}},
 \\
 \tilde \F(x,t) := \frac{\F(\bar x + \theta x, \, \bar t + \lambda^{2-p} \theta^2 t)}{ \lambda},
\quad \tilde f(x,t) := \theta\, \frac{f(\bar x + \theta x, \, \bar t + \lambda^{2-p} \theta^2 t)}{\lambda^{p-1}}.
 \end{array}\right.
\end{equation*}
For $p< 2$, we define
\begin{equation*}
 \left \{
\begin{array}{lcll}
 \tilde u(x,t) := \frac{u(\bar x +  \theta \lambda^{\frac{p-2}{2}}  x, \, \bar t + \theta^2 t)}{\theta \lambda^{\frac{p}{2}}}, 
 \quad
 \tilde \A(x,t, u, \xi) := \frac{\A(\bar x +  \theta \lambda^{\frac{p-2}{2}}  x, \, \bar t + \theta^2 t, \, u, \, \lambda \xi)}{\lambda^{p-1}},\\
  \tilde \F(x,t) := \frac{\F(\bar x +  \theta \lambda^{\frac{p-2}{2}}  x, \, \bar t + \theta^2 t)}{ \lambda},\quad \tilde f(x,t) := \theta\, \frac{f(\bar x +  \theta \lambda^{\frac{p-2}{2}}  x, \, \bar t + \theta^2 t)}{ \lambda^{\frac{p}{2}}}.
 \end{array}\right.
\end{equation*}
Then $\tilde u$ is a weak solution to  equation
\begin{equation}\label{eq-2}
\tilde u_t  = \div \tilde \A(z,\theta \hat\lambda \tilde u,D \tilde u)
+\div(|\tilde \F|^{p-2} \tilde \F)  + 
\tilde f \quad 
\text{in}\quad Q_{r},
\end{equation}
where $\hat \lambda=\lambda$ if $p\geq 2$ and $\hat \lambda=\lambda^{\frac{p}{2}}$ if $1<p<2$.
\end{lemma}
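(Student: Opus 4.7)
The proof is a direct change-of-variables computation in the weak formulation \eqref{weak-1}. I would organize it as follows.

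\textbf{Step 1: chain rule bookkeeping.} Set $y = \bar x + \theta x$, $s = \bar t + \lambda^{2-p}\theta^2 t$ in the case $p \geq 2$ (and $y = \bar x + \theta \lambda^{(p-2)/2} x$, $s = \bar t + \theta^2 t$ in the case $1 < p < 2$). I would first compute, by the chain rule, how $\partial_t$ and $D_x$ acting on $\tilde u$ relate to $u_s$ and $D_y u$ at the point $(y,s)$. For $p\geq 2$ this gives $\partial_t\tilde u(x,t) = \lambda^{1-p}\theta\, u_s(y,s)$ and $D_x\tilde u(x,t) = \lambda^{-1} D_y u(y,s)$; the corresponding formulas in the singular case are analogous with $\hat\lambda = \lambda^{p/2}$ replacing $\lambda$ where appropriate. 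Note the crucial identity $\theta\hat\lambda\,\tilde u(x,t) = u(y,s)$, so the third argument of $\A$ transports cleanly under the rescaling.

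\textbf{Step 2: rewrite each term of the PDE via the change of variables.} From the definitions of $\tilde\A$ and $\tilde\F$, one reads off
\[
\A(y,s,u(y,s),D_y u(y,s)) = \lambda^{p-1}\,\tilde\A\bigl(x,t,\theta\hat\lambda\,\tilde u(x,t),D_x \tilde u(x,t)\bigr),
\qquad |\F|^{p-2}\F(y,s) = \lambda^{p-1}\,|\tilde\F|^{p-2}\tilde\F(x,t).
\]
Combining this with Step 1 and the $y$-divergence $\div_y = \theta^{-1}\div_x$ (resp.\ $\theta^{-1}\lambda^{(2-p)/2}\div_x$ when $1<p<2$), every occurrence of the original PDE at $(y,s)$ becomes a constant multiple of the expressions in \eqref{eq-2}. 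The exponent bookkeeping is designed exactly so that the factor in front of $\partial_t\tilde u$, the factor in front of $\div_x\tilde\A$, and the factor in front of $\tilde f$ all agree; this is a single multiplicative check in each of the two ranges of $p$.

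\textbf{Step 3: transfer to the weak formulation.} Rather than differentiating the PDE pointwise, I would take an arbitrary $\psi\in C_0^\infty(Q_r)$ and test \eqref{weak-1} (written on $Q_{\theta r}^\lambda(\bar z)$) against $\varphi(y,s) := \psi\bigl(\tfrac{y-\bar x}{\theta},\tfrac{s-\bar t}{\lambda^{2-p}\theta^2}\bigr)$ in the case $p\geq 2$, and against the analogous rescaled test function when $1<p<2$. Performing the substitution $(y,s)\mapsto(x,t)$ in each of the three integrals introduces a common Jacobian factor $\lambda^{2-p}\theta^{n+2}$; using the identities of Step 2 and the chain rule formulas of Step 1, every $\lambda$, $\theta$, and Jacobian power collapses, leaving precisely the weak formulation of \eqref{eq-2} with $\tilde u,\tilde\A,\tilde\F,\tilde f$ tested against $\psi\in C_0^\infty(Q_r)$.

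\textbf{Main obstacle.} There is no conceptual difficulty: the claim is essentially that the definitions of $\tilde\A$, $\tilde\F$, $\tilde f$, and the space-time rescaling have been calibrated so that the PDE is covariant. The only thing that can go wrong is an arithmetic slip in tracking the powers of $\lambda$ and $\theta$, and in particular in handling the two different rescalings for $p\geq 2$ versus $1<p<2$. I would therefore write out the $p\geq 2$ case in full detail and then indicate the minor modifications for $1<p<2$, emphasizing that the substitution $\theta\hat\lambda\,\tilde u = u$ is what keeps the $u$-dependence of $\A$ compatible with the rescaling.
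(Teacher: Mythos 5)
Your proposal is correct and follows essentially the same route as the paper: the paper's proof is likewise a change of variables in the weak formulation, pairing a rescaled test function with the original one and using the chain rule identities $D\tilde u=\lambda^{-1}D_y u$ and $\theta\hat\lambda\,\tilde u=u$ so that all powers of $\lambda$, $\theta$, and the Jacobian cancel. The only cosmetic difference is the direction of the test-function correspondence (the paper pushes $\varphi$ from the original cylinder to $Q_r$, you pull $\psi$ back) and which range of $p$ is written out in detail, neither of which matters.
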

\begin{proof}
 This can be easily checked by writing out the weak formulations  and making an appropriate change of variables. Let us consider only the 
 case $p<2$. For $\varphi\in
  C_0^\infty(Q_{\theta r}^\lambda(\bar z))$, let
 \[
  \tilde \varphi (x,t) := \varphi(\bar x +  \theta \lambda^{\frac{p-2}{2}}  x, \, \bar t + \theta^2 t).
 \]
Then since $\tilde \varphi_t = \theta^2 \varphi_s$, $D \tilde \varphi =  \theta \lambda^{\frac{p-2}{2}} D_y \varphi$, and
$D \tilde u =\frac{1}{\lambda}D_y u$, we see that
the integral
\begin{align*}
 \int_{Q_r} (\tilde u \tilde\varphi_t)(x,t) \, dx dt = \int_{Q_r}\langle \tilde \A(x,t,\theta \lambda^{\frac{p}{2}} \tilde u,D \tilde u) 
 +|\tilde \F|^{p-2} \tilde \F,
D \tilde \varphi(x,t)\rangle \, dx dt
 - \int_{Q_r} (\tilde f  \tilde \varphi)(x,t)\, dx dt
\end{align*}
is equivalent to
 \begin{align*}
  \int_{Q_{\theta r}^\lambda(\bar z)}  (u \varphi_s)(y,s) \, dy ds 
 =  \int_{Q_{\theta r}^\lambda(\bar z)}  \langle \A(y,s, u,D_y u) +| \F|^{p-2} \F, D_y \varphi(y,s)\rangle
 \, dy ds
 - \int_{Q_{\theta r}^\lambda(\bar z)}  (f \varphi)(y,s)\, dy ds.
 \end{align*}
Therefore, we infer that $u$ is a weak solution of \eqref{eq-1} if and only if  $\tilde u$ is a weak solution of \eqref{eq-2}.
\end{proof}

\subsection{Energy estimates}
We see from Lemma~\ref{lm:scaling} that our equations are not invariant with respect to the intrinsic geometry. 
This forces us to deal with equation \eqref{eq-2} involving two parameters. For simplicity, we set $\alpha
=\theta \hat \lambda$ and consider  equation
\begin{equation}\label{eq:uQ4}
 u_t =\div \A(z, \alpha u,D  u) + \div(|\F|^{p-2}\F)+ f \quad\mbox{in}
\quad Q_4.
\end{equation}
We now derive some elementary
energy estimates for \eqref{eq:uQ4}. Hereafter, $d\geq 1$ and $\hat p>1$ denote the constants 
given by \eqref{de:d}. We also use throughout the paper that
\begin{equation*}\label{pp}
\bar p = \frac{p(n+2)}{n} \,\, \quad \mbox{ and }\quad \,\, \bar p' =  \frac{p(n+2)}{p(n+2)-n}.
\end{equation*}
Notice that $\bar p'$ is the conjugate of $\bar p$, i.e. $\frac{1}{\bar p} +\frac{1}{\bar p'}=1$. 
\begin{lemma}\label{energy-estimate} 
Assume that $\alpha>0$,  $\A$ satisfies \eqref{structural-reference-1}--\eqref{structural-reference-2} in $Q_4$, and $\A(\cdot,\cdot,0)=0$. 
Suppose  $u$ is a  weak solution of  \eqref{eq:uQ4}.
There exists a constant $C>0$ depending only on $p$, $n$, and $\Lambda$ such that
\begin{enumerate}
 \item[(i)] If $p\geq 2$, then
\begin{align*}
\sup_{s\in (-9,9)} &\int_{B_3} u^2(x,s)\, dx
+ \int_{Q_3}  |D u|^p \, dz
\leq
 C \Bigg[ \int_{Q_4} \big(|u|  + u^2  + |u|^p 
  + |\F|^p \big)\,dz +\Big( \int_{Q_4}|f|^{\bar p'}dz \Big)^{\hat p}\Bigg].
\end{align*}
\item[(ii)] If $1<p<2$, then
\begin{align*}
&\sup_{s\in (-9,9)} \int_{B_3} u^2(x,s)\, dx
+\sigma \int_{Q_3}  |D u|^p \, dz\leq C \int_{Q_4}  ( |u|  + u^2 )\,dz\nonumber\\
&+
 C\left[\sigma^{\frac{2}{2-p}} \int_{Q_4} (1+ | D u|^p) \, dz
 + \sigma^{1-p}  \int_{Q_4}  |u|^p \, dz
 +\sigma^{\frac{-1}{p-1}}\int_{Q_4}  |\F|^p  \, dz +\sigma^{\frac{-(n+p)}{p(n+1)-n}}\Big( \int_{Q_4}|f|^{\bar p'}dz \Big)^{\hat p}\right]
\end{align*}
for every $\sigma>0$ small.
\end{enumerate}
\end{lemma}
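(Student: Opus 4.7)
The plan is the standard energy estimate via Steklov-averaged testing. Fix a cutoff $\eta=\psi(x)\zeta(t)\in C_0^\infty(Q_4)$ with $\eta\equiv 1$ on $Q_3$, $0\le\eta\le 1$, and $|D\eta|+|\eta_t|\le C$; set $q:=\max(p,2)$. Using \eqref{weak-2} with test function $[u]_h\eta^q$, rewriting the time term as $\tfrac12\partial_t([u]_h^2)\eta^q$, integrating by parts in $t$ up to an arbitrary level $t_0\in(-9,9)$, and sending $h\to 0$ via standard Steklov arguments, one arrives at an identity of the form
\begin{equation*}
\tfrac12\int_{B_4}u^2\eta^q(x,t_0)\,dx+\int_{Q_4^{t_0}}\langle\A(z,\alpha u,Du),D(u\eta^q)\rangle\,dz=\tfrac{q}{2}\int_{Q_4^{t_0}}u^2\eta^{q-1}\eta_t\,dz-\int_{Q_4^{t_0}}\langle|\F|^{p-2}\F,D(u\eta^q)\rangle\,dz+\int_{Q_4^{t_0}}fu\eta^q\,dz,
\end{equation*}
where $Q_4^{t_0}:=Q_4\cap\{t<t_0\}$.

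For the principal term I would split $D(u\eta^q)=\eta^q Du+q u\eta^{q-1}D\eta$. The diagonal part is bounded below using \eqref{structural-reference-1} with the second argument set to $0$ (allowed because $\A(\cdot,\cdot,0)=0$): when $p\ge 2$ this yields $\langle\A,Du\rangle\ge\Lambda^{-1}|Du|^p$, while when $1<p<2$ I invoke Lemma~\ref{simple-est} with $\tau\sim\sigma^{p/(2-p)}$ so that $\tau^{2/p-1}(1-\tau)\gtrsim\sigma$, producing a lower bound $c\sigma|Du|^p$ at the cost of the $\sigma^{2/(2-p)}(1+|Du|^p)$ penalty appearing in (ii). The off-diagonal piece and the $|\F|^{p-2}\F$ term are controlled by \eqref{structural-reference-2} combined with Young's inequality; in the subquadratic regime the Young weights are tracked in $\sigma$ and give precisely the $\sigma^{1-p}|u|^p$ and $\sigma^{-1/(p-1)}|\F|^p$ contributions.

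The source term $\int fu\eta^q$ is the subtle piece. By H\"older it is at most $\|f\|_{L^{\bar p'}(Q_4)}\|u\eta^q\|_{L^{\bar p}(Q_4)}$, and the parabolic Sobolev embedding applied to the compactly supported function $u\eta^q$ gives
\begin{equation*}
\|u\eta^q\|_{L^{\bar p}(Q_4)}^{\bar p}\le C\Big(\sup_{t<t_0}\|u\eta^q(\cdot,t)\|_{L^2(B_4)}^2\Big)^{p/n}\int_{Q_4}|D(u\eta^q)|^p\,dz.
\end{equation*}
A three-factor Young then splits this bound into a small multiple of $\sup_{t<t_0}\int u^2\eta^q\,dx$, a small multiple of $\int|Du|^p\eta^p\,dz$, and a remainder of the form $C_\epsilon(\int_{Q_4}|f|^{\bar p'})^{\hat p}$; the exponent $\hat p=(p(n+2)-n)/(p(n+1)-n)$ emerges as the conjugate power needed to close this three-way interpolation. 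Taking $\sup_{t_0\in(-9,9)}$ on the left-hand side and absorbing the small multiples yields (i) when $p\ge 2$ and (ii) when $1<p<2$.

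The main obstacle I anticipate is the bookkeeping in the $1<p<2$ case: the parameter $\sigma$ must be threaded consistently through Lemma~\ref{simple-est}, through the Young inequalities on the $\A$ cross-term and on the $|\F|^{p-2}\F$ term, and through the three-factor Young on the source, so that the resulting powers of $\sigma$ match $\sigma^{2/(2-p)},\sigma^{1-p},\sigma^{-1/(p-1)},\sigma^{-(n+p)/(p(n+1)-n)}$ exactly as stated. The Steklov averaging step and the final absorption are routine by comparison.
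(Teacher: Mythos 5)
Your proposal is correct and follows essentially the same route as the paper's proof: Steklov-averaged testing with a power of a cutoff times $u$, the lower bound on the principal term via \eqref{structural-reference-1} (using $\A(\cdot,\cdot,0)=0$) for $p\ge 2$ and via Lemma~\ref{simple-est} with $\tau\sim\sigma^{p/(2-p)}$ for $1<p<2$, Young's inequality with $\sigma$-weights on the cross and $\F$ terms, and the H\"older--parabolic-embedding--Young treatment of $\int fu\eta^q$ producing the $(\int|f|^{\bar p'})^{\hat p}$ term before the final sup-and-absorb step. The only (immaterial) difference is your choice of cutoff power $q=\max(p,2)$ where the paper uses $\varphi^p$ throughout.
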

\begin{proof}
Let $\varphi \in C^\infty_0(Q_4)$ be the standard nonnegative cut-off function which is $1$ on  $Q_3$.
Using  $\phi(x) =\varphi(x,t)^p [u]_h(x,t)$ in the  weak formulation  \eqref{weak-2} and then integrating in 
$t$  we get after letting $h\to 0^+$ that 
\begin{align*}
\frac12\int_{B_4} (u^2 \varphi^p)(x,s) \, dx 
&- \frac{p}{2}  \int_{-16}^s\int_{B_4}  u^2 \varphi^{p-1} \varphi_t\, dz\\
&= -
\int_{-16}^s\int_{B_4}  \langle \A(z, \alpha u,D  u) + |\F|^{p-2} \F , D (\varphi^p u) \rangle \, dz
+\int_{-16}^s\int_{B_4} f \varphi^p u \, dz
\end{align*}
for each $s\in (-16,16)$. 
Let $K_s := B_4\times (-16, s)$.
Then it follows from the above identity, the 
assumption $\A(z, \alpha u,0)=0$, and   \eqref{structural-reference-2} that
\begin{align}\label{ini-energy}
&\frac12\int_{B_4} (u^2 \varphi^p)(x,s) \, dx 
- \frac{p}{2}  \int_{K_s}  u^2 \varphi^{p-1} \varphi_t\, dz +\int_{K_s}  \langle 
\A(z, \alpha u,D  u)-\A(z, \alpha u,0) , D u \rangle \varphi^p \, dz \nonumber\\
&=-p\int_{K_s} \langle \A(z, \alpha u,D  u) +|\F|^{p-2}\F, D \varphi  \rangle u \varphi^{p-1} \, dz
-  \int_{K_s}  |\F|^{p-2} \langle   \F , Du\rangle \varphi^p\, dz+\int_{K_s} f \varphi^p u \, dz\nonumber\\
&\leq p \int_{K_s} \Big[\Lambda \big(1+| D  u|^{p-1}\big) +|\F|^{p-1}\Big]\varphi^{p-1} |u| |D \varphi|        \, dz
 +\int_{K_s}  |\F|^{p-1}  | D u| \varphi^p\, dz+\int_{K_s} |f| |u \varphi^p|  \, dz .
\end{align}
 We next use H\"older's inequality,  the parabolic embedding  (see \cite[Proposition~3.1, page~7]{D2}), and Young's inequality  to get
\begin{align}\label{f-sobolev}
\int_{K_s}   |f|  |u \varphi^p|\, dz
&\leq \|u \varphi^p\|_{L^{\bar p}(K_s)}
\|f\|_{L^{\bar p'}(K_s)}\nonumber\\
&\leq 
C(n,p)\Big(\int_{K_s}  |D (u \varphi^p)|^p \, dz\Big)^{\frac{1}{\bar p}}
\Big(\sup_{t\in (-16,s)}\int_{B_4} (u \varphi^p)^2(x,t)\, dx \Big)^{\frac{p}{n \bar p}}
\,\, \|f\|_{L^{\bar p'}(K_s)}\nonumber\\
&\leq \e \int_{K_s}  |D (u \varphi^p)|^p \, dz +\e \sup_{t\in (-16,s)}\int_{B_4}(u \varphi^p)^2(x,t)\, dx
+ \frac{C(n,p)}{\e^{\frac{p+n}{p(n+1)-n}}} \|f\|_{L^{\bar p'}(K_s)}^{\frac{p(n+2)}{p(n+1)-n}}\quad \forall \e>0.
\end{align}
Applying Young's inequality to the first two integrals on the right hand side of \eqref{ini-energy} and using 
\eqref{f-sobolev} with a suitable choice of $\e$, we then obtain
\begin{align}\label{pre-energy}
\frac12\int_{B_4} (u^2 \varphi^p)(x,s) \, dx 
 &+\int_{K_s}  \langle 
\A(z, \alpha u,D  u)-\A(z, \alpha u,0) , D u \rangle \varphi^p \, dz -\sigma \int_{K_s} | D  u|^p\varphi^p\, dz\nonumber\\
 &\leq \frac14 \sup_{t\in (-16,16)}\int_{B_4} (u^2 \varphi^p)(x,t)\, dx + C \int_{Q_4}  (|u|+ u^2)  \, dz
 + C \sigma^{1-p} \int_{Q_4}  |u|^p \, dz \nonumber\\ 
 &\quad  +C\, \sigma^{\frac{-1}{p-1}}\int_{Q_4}  |\F|^p\, dz
 +C \sigma^{\frac{-(n+p)}{p(n+1)-n}}\Big( \int_{Q_4}|f|^{\bar p'}dz \Big)^{\hat p} \qquad \qquad \forall \sigma\in (0,1).
\end{align}
 If $p\geq 2$, then by using structural condition 
\eqref{structural-reference-1} and choosing $\sigma$ sufficiently small we arrive at
\begin{align}\label{uphi-energy}
&\frac12\int_{B_4} (u^2 \varphi^p)(x,s) \, dx
+\frac{1}{2\Lambda}\int_{K_s}  | D u|^p \varphi^p \, dz\nonumber\\
&\leq   \frac14 \sup_{t\in (-16,16)}\int_{B_4} (u^2 \varphi^p)(x,t)\, dx 
+C\Bigg[\int_{Q_4}  \big( |u|+u^2 +|u|^p +|\F|^p\big) \, dz
 +\Big( \int_{Q_4}|f|^{\bar p'}dz \Big)^{\hat p}\Bigg]
\end{align}
for each $s\in (-16,16)$. This immediately  gives
\begin{align*}
\sup_{s\in (-16,16)}\int_{B_4} (u^2 \varphi^p)(x,s) \, dx
\leq 
C\Bigg[\int_{Q_4}  \big( |u|+u^2 +|u|^p +|\F|^p\big) \, dz
 +\Big( \int_{Q_4}|f|^{\bar p'}dz \Big)^{\hat p}\Bigg].
\end{align*}
Combing this with \eqref{uphi-energy} and using the fact
$\varphi=1$ on $Q_3$, we obtain (i). In the case $1<p<2$, it follows  from  Lemma~\ref{simple-est} that
\[
 c \tau^{\frac{2-p}{p}} |Du|^p - 2c \tau^{\frac2p}(1+|Du|^p)\leq 
\langle \A(z, \alpha u,D  u)-\A(z, \alpha u,0) , D u  \rangle \qquad \forall \tau\in (0,1/2).
\]
This together with \eqref{pre-energy} gives
\begin{align*}
&\frac12\int_{B_4} (u^2 \varphi^p)(x,s) \, dx
+(c \tau^{\frac{2-p}{p}}-\sigma) \int_{K_s}  | D u|^p \varphi^p\, dz\\
&\leq \frac14 \sup_{t\in (-16,16)}\int_{B_4} (u^2 \varphi^p)(x,t)\, dx + C \int_{Q_4}  (|u|+ u^2)  \, dz
+2c\tau^{\frac2p} \int_{Q_4} (1+ | D u|^p)\, dz\nonumber\\
& \quad + C \sigma^{1-p} \int_{Q_4}  |u|^p \, dz 
+C\, \sigma^{\frac{-1}{p-1}}\int_{Q_4}  |\F|^p\, dz
 +C \sigma^{\frac{-(n+p)}{p(n+1)-n}}\Big( \int_{Q_4}|f|^{\bar p'}dz \Big)^{\hat p}
 \quad \mbox{for  }s\in (-16,16).
\end{align*}
 By taking $\tau$ such that $c \tau^{\frac{2-p}{p}}=2\sigma$, we infer as in the case $p\geq 2$  that (ii) holds.
\end{proof}
The next  lemma allows us to estimate the difference between gradients of  solutions originating from different equations. 
\begin{lemma}\label{gradient-est-II} 
Assume that $\alpha>0$, and $\A$, $\hat \A$ satisfy \eqref{structural-reference-1}--\eqref{structural-reference-2}
in $Q_4$.
Suppose $u$ is a  weak solution of \eqref{eq:uQ4}
and  $v$ is a weak solution of  
\begin{equation*}
\left \{
\begin{array}{lcll}
 v_t &=&\div  \hat\A(z, \alpha v,D  v)  \quad &\text{in}\quad Q_4, \\
v & =& u\quad &\text{on}\quad \partial_p Q_4.
\end{array}\right.
\end{equation*}
Then there exists $C>0$ depending only on $n$, $p$, and $\Lambda$ such that
\begin{align*}
\sup_{s\in (-16,16)}\int_{B_4} | u -v|^2\, dx
&+ \int_{Q_4}  |D  u - D  v|^p \, dz\leq 
C\Bigg[ \int_{Q_4} \big(1+  | Du|^p +|\F|^p\big) \, dz +\Big( \int_{Q_4}|f|^{\bar p'}dz \Big)^{\hat p}\Bigg].
\end{align*}
\end{lemma}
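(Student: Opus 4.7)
\medskip

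\noindent\textbf{Proof plan.} The natural strategy is to use the difference $w=u-v$ as a test function in the weak formulations of the two equations, after first regularising in time via Steklov averages as in \eqref{weak-2}. Since $u=v$ on $\partial_p Q_4$, the function $w$ belongs to $L^p(-16,16;W_0^{1,p}(B_4))$, so no spatial cut--off is required, and the time integration from $-16$ to $s$ produces no contribution at $t=-16$ either. After letting $h\to 0^+$ I expect the identity
\[
\tfrac12\int_{B_4} w^2(x,s)\,dx + \int_{-16}^{s}\!\!\int_{B_4}\langle \A(z,\alpha u,Du)-\hat\A(z,\alpha v,Dv),Dw\rangle\,dz
= -\int_{-16}^s\!\!\int_{B_4}\langle |\F|^{p-2}\F,Dw\rangle\,dz+\int_{-16}^s\!\!\int_{B_4} fw\,dz
\]
for every $s\in(-16,16)$.

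\medskip

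\noindent\textbf{Key step: splitting the principal part.} I would decompose
\[
\A(z,\alpha u,Du)-\hat\A(z,\alpha v,Dv)=\bigl[\hat\A(z,\alpha v,Du)-\hat\A(z,\alpha v,Dv)\bigr]+\bigl[\A(z,\alpha u,Du)-\hat\A(z,\alpha v,Du)\bigr].
\]
The first bracket, tested against $Du-Dv$, is bounded below by the monotonicity hypothesis \eqref{structural-reference-1}: directly by $\Lambda^{-1}|Du-Dv|^p$ when $p\geq2$, and via Lemma~\ref{simple-est} (with a small parameter $\tau$) when $1<p<2$, up to an absorbable term proportional to $1+|Du|^p$. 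For the second bracket, the bound \eqref{structural-reference-2} gives $|\A(z,\alpha u,Du)-\hat\A(z,\alpha v,Du)|\leq 2\Lambda(1+|Du|^{p-1})$; pairing with $|Dw|$ and applying Young's inequality with a small parameter $\varepsilon$ yields $\varepsilon|Dw|^p+C_\varepsilon(1+|Du|^p)$, and the first term is absorbed into the monotone lower bound.

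\medskip

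\noindent\textbf{Handling the right--hand side.} The $\F$ contribution is treated identically: $|\F|^{p-1}|Dw|\leq \varepsilon|Dw|^p+C_\varepsilon|\F|^p$. For the forcing term, since $w\in L^p(-16,s;W_0^{1,p}(B_4))$, the parabolic Sobolev embedding as used in \eqref{f-sobolev} of Lemma~\ref{energy-estimate} gives, for every $\varepsilon>0$,
\[
\int_{-16}^s\!\!\int_{B_4}|fw|\,dz\leq \varepsilon\int_{-16}^s\!\!\int_{B_4}|Dw|^p\,dz+\varepsilon\sup_{t\in(-16,s)}\int_{B_4}w^2(x,t)\,dx+C_\varepsilon\Bigl(\int_{Q_4}|f|^{\bar p'}dz\Bigr)^{\hat p},
\]
so both small terms are absorbed by choosing $\varepsilon$ sufficiently small after taking $s$--supremum.

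\medskip

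\noindent\textbf{Conclusion and anticipated difficulty.} Combining the above absorptions produces
\[
\tfrac12\int_{B_4}w^2(x,s)\,dx+c\int_{-16}^s\!\!\int_{B_4}|Dw|^p\,dz\leq \tfrac14\sup_{t\in(-16,16)}\!\!\int_{B_4}w^2(x,t)\,dx+C\Bigl[\int_{Q_4}\!(1+|Du|^p+|\F|^p)dz+\Bigl(\int_{Q_4}\!|f|^{\bar p'}dz\Bigr)^{\hat p}\Bigr],
\]
and taking the supremum in $s$ and absorbing the first term on the right delivers the claim. The main technical obstacle I anticipate is the singular case $1<p<2$, where the monotonicity gives only the degenerate lower bound and one has to balance the parameter $\tau$ from Lemma~\ref{simple-est} against the $\varepsilon$'s arising from Young's inequality and from the parabolic Sobolev term, much as in part (ii) of Lemma~\ref{energy-estimate}. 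Everything else is routine bookkeeping with Steklov averages.
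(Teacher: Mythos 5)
Your proposal is correct and follows essentially the same route as the paper: the same splitting $\A(z,\alpha u,Du)-\hat\A(z,\alpha v,Dv)=[\hat\A(z,\alpha v,Du)-\hat\A(z,\alpha v,Dv)]+[\A(z,\alpha u,Du)-\hat\A(z,\alpha v,Du)]$, monotonicity via \eqref{structural-reference-1} (with Lemma~\ref{simple-est} for $1<p<2$), the growth bound \eqref{structural-reference-2} plus Young for the second bracket and the $\F$ term, and the parabolic embedding argument of \eqref{f-sobolev} for the forcing term. The only cosmetic difference is that the paper simply fixes $\tau=1/2$ in Lemma~\ref{simple-est}, since the resulting error term $C(1+|Du|^p)$ is already of the form appearing on the right-hand side and needs no balancing.
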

\begin{proof}
Let 
$h =  u -  v$. Then  $h$ is a weak solution of 
\[
h_t = \div \big[ \hat\A(z, \alpha v,D u) -
 \hat\A(z,  \alpha v,D v)  \big]+\div \big[\A(z,\alpha u,D  u) -
 \hat\A(z,\alpha v,D  u) \big] + \div(|\F|^{p-2}\F) + f
 \mbox{ in } Q_4,
\]
with $h=0$ on $\partial_p Q_4$. Multiplying the above equation by $h$ and integrating by parts we obtain for each $s\in (-16,16)$ that
\begin{align*}
&\int_{B_4} \frac{h^2(x,s)}{2} \, dx
+ \int_{-16}^s\int_{B_4} \langle \hat\A(z,\alpha  v,D  u) -
 \hat\A(z,\alpha  v,D  v) , D h\rangle \, dz\\
&= -
\int_{-16}^s\int_{B_4}  \langle \A(z,\alpha  u,D  u) -
\hat\A(z,\alpha v,D  u) + |\F|^{p-2}\F, D h\rangle \, dz
 +  \int_{-16}^s\int_{B_4} f h\, dz.
\end{align*} 
We deduce from this, structural conditions \eqref{structural-reference-1}--\eqref{structural-reference-2},
and  Lemma~\ref{simple-est} with $\tau=1/2$ that
\begin{align*}
\frac{1}{2}\int_{B_4} h^2(x,s) \, dx
&+ c(\Lambda, p)\int_{K_s}  | Dh|^p \, dz- c(\Lambda, p) \int_{K_s}  (1+| D u|^p) \, dz\\
&\leq 
 \int_{K_s}   \Big[ 2 \Lambda(1+| D  u|^{p-1}) +|\F|^{p-1} \Big]| D h|\, dz+\int_{K_s}   |f|  |  h|\, dz,
\end{align*}
where $K_s := B_4\times (-16, s)$. Hence, applying Young's inequality and collecting like-terms give
\begin{equation}\label{energy-embed}
\int_{B_4} h^2(x,s)\, dx
+ \int_{K_s}  |D h|^p \, dz
\leq 
C\, \int_{K_s} \big(1+  | Du|^p +|\F|^p\big) \, dz +C\int_{K_s}   |f|  |  h|\, dz.
\end{equation}
 But it follows from the same argument as in \eqref{f-sobolev} that 
\begin{align*}
\int_{K_s}   |f|  |  h|\, dz
\leq \e \int_{K_s}  |D h|^p \, dz +\e \sup_{t\in (-16,s)}\int_{B_4} h^2(x,t)\, dx
+ \frac{C(n,p)}{\e^{\frac{p+n}{p(n+1)-n}}} \|f\|_{L^{\bar p'}(K_s)}^{\frac{p(n+2)}{p(n+1)-n}}\qquad \forall \e>0.
\end{align*}
Hence by taking $\e=1/(2C)$ and substituting the resulting expression into \eqref{energy-embed}, we obtain
\begin{align*}
\int_{B_4} h^2(x,s)\, dx
&+\frac12 \int_{K_s}  |D h|^p \, dz\nonumber\\
&\leq \frac12 \sup_{t\in (-16,16)}\int_{B_4} h^2(x,t)\, dx +
C\Bigg[ \int_{Q_4} \big(1+  | Du|^p +|\F|^p\big) \, dz 
+  \|f\|_{L^{\bar p'}(Q_4)}^{\frac{p(n+2)}{p(n+1)-n}}\Bigg]
\end{align*}
for each  $s\in (-16,16)$. 
This implies the conclusion of the lemma.
\end{proof}

\subsection{Higher integrability of  gradients}\label{sub:higher}
We next state the higher integrability in the sense of Elcrat and Meyers \cite{ME}
for spatial gradients  of weak solutions to equation \eqref{eq:uQ4}:
\begin{theorem}
\label{thm:higherint}
Assume that $\alpha>0$ and  $\A$ satisfies \eqref{structural-reference-1}--\eqref{structural-reference-2}.
Let $p>2n/(n+2)$ and suppose that 
$u$  is  a weak solution of \eqref{eq:uQ4}.
Then there exist $\e_0>0$ small and $C>0$  depending only on $\Lambda$, $n$, and $p$  such that
\begin{align}\label{higher-in}
\int_{Q_3} |D u|^{p+\e_0}\, dz\leq C\Bigg\{ 1+ &\Big[\int_{Q_4} (|D u|^p +|\F|^p)\, dz  + \Big( \int_{Q_4}|f|^{\bar p'}dz \Big)^{\hat p}\Big]^{1+ \frac{\e_0 d}{p}}\nonumber\\
&\qquad\qquad + \int_{Q_3} |\F|^{p+\e_0}\, dz +\Big( \int_{Q_3}|f|^{\bar p'(1+\frac{\e_0}{p})}dz \Big)^{\hat p} \Bigg\}.
\end{align}
\end{theorem}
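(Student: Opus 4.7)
The plan is to prove Theorem~\ref{thm:higherint} via the classical Elcrat--Meyers/Gehring route adapted to the degenerate parabolic setting in the style of \cite{D2,KiL,AM}: derive a Caccioppoli-type inequality on DiBenedetto's intrinsic cylinders $Q_r^\lambda(\bar z)$, combine it with the parabolic Sobolev--Poincar\'e inequality to obtain a reverse H\"older inequality, and then invoke a self-improving lemma. The dependence of $\A$ on the unknown $u$ is harmless at this stage: writing $\A(z,\alpha u, Du) = [\A(z,\alpha u, Du) - \A(z,\alpha u, 0)] + \A(z,\alpha u, 0)$ and noting $|\A(z,\alpha u, 0)| \le \Lambda$ from \eqref{structural-reference-2}, the correction $\A(z,\alpha u,0)$ can be absorbed into the forcing, so all coercivity/growth estimates rest only on \eqref{structural-reference-1}--\eqref{structural-reference-2} and Lemma~\ref{simple-est}.

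First I would derive a localized Caccioppoli inequality on an intrinsic cylinder $Q_{2r}^\lambda(\bar z) \subset Q_4$. With $k := (u)_{Q_{2r}^\lambda(\bar z)}$ and a smooth cut-off $\varphi$ equal to $1$ on $Q_r^\lambda(\bar z)$ and vanishing outside $Q_{2r}^\lambda(\bar z)$, I would test the Steklov-averaged equation \eqref{weak-2} with $\varphi^p (u-k)$ and mimic the energy computation of Lemma~\ref{energy-estimate}, using \eqref{structural-reference-1} (respectively Lemma~\ref{simple-est} for $1<p<2$), Young's inequality on the coupling terms, and the parabolic-embedding argument of \eqref{f-sobolev} on the $f$-term. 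This yields an estimate of the form
\begin{align*}
\sup_{t}\fint_{B_r(\bar x)}\frac{|u-k|^2}{\tau_\lambda}\,dx + \fint_{Q_r^\lambda(\bar z)}|Du|^p\,dz &\le C\fint_{Q_{2r}^\lambda(\bar z)}\Bigl(\frac{|u-k|^p}{r^p} + |\F|^p + 1\Bigr)dz \\
&\quad + C\Bigl(\fint_{Q_{2r}^\lambda(\bar z)}|f|^{\bar p'}\,dz\Bigr)^{\hat p},
\end{align*}
where $\tau_\lambda$ is the intrinsic time-width of $Q_r^\lambda(\bar z)$ (equal to $\lambda^{2-p}r^2$ for $p\ge 2$). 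Applying the parabolic Sobolev--Poincar\'e inequality (see \cite[Chap.~I]{D2}) on $Q_{2r}^\lambda(\bar z)$ to dominate $\fint|u-k|^p/r^p$ by $\bigl(\fint|Du|^{p\theta}\,dz\bigr)^{1/\theta}$ for some $\theta = \theta(n,p) < 1$ converts the Caccioppoli bound into a reverse H\"older inequality, valid on any $Q_{2r}^\lambda(\bar z)$ on which the intrinsic balance $\fint_{Q_{2r}^\lambda(\bar z)}|Du|^p\,dz \sim \lambda^p$ is in force.

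Next I would locate such intrinsic cylinders via a stopping-time/exit-time argument at each threshold $\lambda > \lambda_0$, in the spirit of \cite{KiL,AM}, to produce a Vitali-type covering of the super-level set $\{|Du| > \lambda\}$ by cylinders satisfying the intrinsic balance; a parabolic Gehring/Giaquinta--Modica self-improving lemma (cf.\ \cite{KiL}) then delivers the existence of $\e_0 > 0$ and the quantitative bound \eqref{higher-in}. The deficit exponent $1 + \e_0 d/p$ in the bracketed data term arises from translating intrinsic $L^p$ averages back to the reference standard cylinder: the ratio $|Q_r|/|Q_r^\lambda|$ is a power of $\lambda$, and once the stopping-time construction pins $\lambda$ to the $L^p$ average of $|Du|$ on the full cylinder, this contributes precisely $d$ from \eqref{de:d} --- namely $d = p/2$ for $p \ge 2$ (from the intrinsic time rescaling $\lambda^{2-p}$) and $d = 2p/[(n+2)p - 2n]$ for $1 < p < 2$ (from the intrinsic spatial rescaling $\lambda^{(p-2)/2}$). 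The main obstacle is the careful execution of this stopping-time construction: one must ensure that the exit cylinders stay inside $Q_4$, that the Vitali covering constant is purely dimensional, and, most importantly, that the intrinsic-to-standard scaling deficit is tracked sharply enough to produce the stated exponent $1 + \e_0 d/p$ rather than a larger, non-optimal one.
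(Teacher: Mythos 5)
Your proposal follows essentially the same route as the paper's own proof in Section~\ref{sec:proof-higher}: a Caccioppoli-type estimate (Lemma~\ref{lm:Caccio}, with the $u$-dependence of $\A$ absorbed exactly as you describe via $\A(z,\alpha u,0)$ and the growth bound \eqref{structural-reference-2}), combined with the parabolic Sobolev--Poincar\'e inequality of \cite[Lemma~3.3]{KiL} to get a reverse H\"older inequality on stopping-time intrinsic cylinders, followed by the level-set/Gehring iteration with the truncation device to justify the absorption. The only differences are cosmetic (the paper uses a time-dependent weighted mean $u^\eta_{\bar x,2\rho}(t)$, justified by an absolute-continuity lemma, rather than your constant space-time average, and states the Caccioppoli bound on standard cylinders before specializing to intrinsic time scales), and your accounting for the exponent $1+\e_0 d/p$ matches the paper's.
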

In this theorem we do not impose  any smallness condition on $\A$   and this self improving property  of gradients will be used to perform the perturbation analysis in Section~\ref{sec:AppGra}.
The proof of Theorem~\ref{thm:higherint}  will be given in Section~\ref{sec:proof-higher}.

\section{General arguments without PDEs}\label{sec:general} In this section, we establish some general results which 
are independent of the PDEs under consideration.  
 \subsection{A covering argument} 
\begin{lemma}\label{lm:covering} Let $p>2n/(n+2)$ and $0<R_1< R_2$. 
Assume that $g\in L^p(Q_{R_2})$ and $h_1,\,h_2\in L^1(Q_{R_2})$ are nonnegative functions. Define
 \[
  \bar\lambda^{\frac{p}{d}} := \fint_{Q_{R_2}}\big(g^p + h_1 +1\big)\, dz
  +\frac{1}{|Q_{R_2}|} \Big(\int_{Q_{R_2}} h_2\, dz\Big)^{\hat p}\quad \mbox{and}\quad \bar B^{\frac{p}{d}}:=\Big(\frac{10 R_2}{R_2 - R_1}\Big)^{n+2}.
 \]
Then for any $\lambda\geq \bar B \bar\lambda$, there exists a sequence of disjoint {\it intrinsic cylinders}  $\{Q_{r_i}^\lambda(z_i)\}$ with 
$z_i\in Q_{R_1}$ and $r_i\in (0, \frac{R_2 - R_1}{10}]$
that satisfies the following properties:
\begin{enumerate}
 \item[1)] $\fint_{Q_{r_i}^\lambda(z_i)}\big(g^p + h_1\big)\, dz +\frac{1}{|Q_{r_i}^\lambda(z_i)|} \Big(\int_{Q_{r_i}^\lambda(z_i)} h_2\, dz\Big)^{\hat p}=\lambda^p$ for each $i$.
\item[2)] $\fint_{Q_{r}^\lambda(z_i)}\big(g^p + h_1\big)\, dz 
+\frac{1}{|Q_r^\lambda(z_i)|} \Big(\int_{Q_r^\lambda(z_i)}h_2\, dz\Big)^{\hat p} <\lambda^p$ for every 
$r\in (r_i, R_2 - R_1]$.
\item[3)] 
$ E:=\Big\{z\in Q_{R_1}:\, z\mbox{ is a Lebesgue point of $g$ and } g(z)>\lambda \Big\} \subset \bigcup_{i=1}^\infty Q_{5 r_i}^\lambda(z_i)$.
\end{enumerate}
\end{lemma}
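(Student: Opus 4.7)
The plan is a classical stopping-time construction on intrinsic cylinders, followed by a Vitali-type $5r$-covering. For each $z \in Q_{R_1}$ I would introduce the function
\[
\Phi_z(r) := \fint_{Q_r^\lambda(z)}(g^p+h_1)\,dz + \frac{1}{|Q_r^\lambda(z)|}\Bigl(\int_{Q_r^\lambda(z)}h_2\,dz\Bigr)^{\hat p}, \qquad r \in (0, R_2-R_1],
\]
which is continuous in $r>0$. The $+1$ in the integrand defining $\bar\lambda$ forces $\bar\lambda \geq 1$, hence $\lambda \geq \bar B \bar\lambda \geq 1$; since $\lambda^{2-p}$ (if $p \geq 2$) or $\lambda^{(p-2)/2}$ (if $p<2$) is then $\leq 1$, we have $Q_r^\lambda(z) \subset Q_r(z) \subset Q_{R_2}$ for all $z \in Q_{R_1}$ and $r \in (0, R_2-R_1]$, which makes $\Phi_z$ well-defined on this interval.

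For $z \in E$, Jensen's inequality gives $\liminf_{r \to 0^+}\fint_{Q_r^\lambda(z)}g^p \geq g(z)^p > \lambda^p$, so $\Phi_z(r) > \lambda^p$ for all sufficiently small $r$. Conversely, enlarging the domain of integration to $Q_{R_2}$ yields the clean bound
\[
\Phi_z(r) \leq \frac{|Q_{R_2}|}{|Q_r^\lambda(z)|}\bigl(\bar\lambda^{p/d}-1\bigr) = \Bigl(\frac{R_2}{r}\Bigr)^{n+2}\lambda^{p-p/d}\bigl(\bar\lambda^{p/d}-1\bigr),
\]
where the identity $p - p/d = p-2$ in the case $p \geq 2$ and $p - p/d = n(2-p)/2$ in the case $\tfrac{2n}{n+2} < p < 2$ follows directly from the definition of $d$. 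Specializing to $r_* := (R_2-R_1)/10$ we have $(R_2/r_*)^{n+2} = \bar B^{p/d}$, and the hypothesis $\lambda \geq \bar B \bar\lambda$ then yields
\[
\Phi_z(r_*) < \bar B^{p/d}\bar\lambda^{p/d}\lambda^{p-p/d} = (\bar B\bar\lambda)^{p/d}\lambda^{p-p/d} \leq \lambda^p,
\]
the strict inequality coming from the $-1$ produced by the $+1$ in $\bar\lambda^{p/d}$. Since $(R_2/r)^{n+2}$ is monotone in $r$, this strict inequality persists throughout $[r_*, R_2-R_1]$.

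By continuity of $\Phi_z$ and the intermediate value theorem, the stopping radius
\[
r_z := \sup\bigl\{\,r \in (0, r_*] : \Phi_z(r) \geq \lambda^p\,\bigr\}
\]
is a well-defined element of $(0, r_*)$ satisfying $\Phi_z(r_z) = \lambda^p$ and $\Phi_z(r) < \lambda^p$ for every $r \in (r_z, R_2-R_1]$, establishing properties (1) and (2) for each $z \in E$. To obtain (3) I would apply a standard Vitali-type argument to the family $\{Q_{r_z}^\lambda(z)\}_{z \in E}$, whose radii are uniformly bounded by $r_*$: extract a countable disjoint subfamily $\{Q_{r_i}^\lambda(z_i)\}$ with near-maximal radius at each stage. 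A coordinatewise check, exploiting that the spatial-scale factor $\lambda^{(p-2)/2}$ (or temporal-scale factor $\lambda^{2-p}$) is common to both cylinders, shows that whenever $Q_{r_z}^\lambda(z)\cap Q_{r_i}^\lambda(z_i) \neq \emptyset$ with $r_z \leq r_i$ one has $Q_{r_z}^\lambda(z) \subset Q_{5 r_i}^\lambda(z_i)$, which gives the desired covering. The only genuine care needed is the exponent bookkeeping across the two regimes, together with the choice of the numerical factor $10$ in $\bar B$, which is precisely what guarantees that every stopping radius automatically lies in $(0, (R_2-R_1)/10]$.
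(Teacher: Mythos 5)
Your proposal is correct and follows essentially the same route as the paper: bound the averaged quantity from above by enlarging the domain of integration to $Q_{R_2}$ (using $\lambda\geq 1$ to keep the intrinsic cylinders inside $Q_{R_2}$), obtain the lower bound at small radii from the Lebesgue-point hypothesis and Jensen, select the stopping radius by continuity, and finish with the Vitali $5r$-covering. The only nitpick is that the greedy Vitali selection yields $r_z\leq 2r_i$ rather than $r_z\leq r_i$ for an intersecting cylinder, but the engulfing $Q_{r_z}^\lambda(z)\subset Q_{5r_i}^\lambda(z_i)$ still holds in that case, so nothing is lost.
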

\begin{proof}
The proof of this lemma can be deduced from the arguments in \cite{KiL,AM,Bo}. For the sake of completeness, we reproduce  it here.
Observe  that due to $\lambda\geq 1$ we have $Q_{r}^\lambda(z)\subset Q_{R_2}$ for every $z\in Q_{R_1}$ and every $r\leq R_2 - R_1$.

Let   $z\in E$ be arbitrary. On one hand, 
 we have 
 \begin{align*}
  \fint_{Q_{r}^\lambda(z)}\big(g^p + h_1\big)\, dz
  +\frac{1}{|Q_r^\lambda(z)|} \Big(\int_{Q_r^\lambda(z)} h_2\, dz\Big)^{\hat p}
  &\leq \frac{|Q_{R_2}|}{|Q_{r}^\lambda(z)|}
  \Big[\fint_{Q_{R_2}}\big(g^p + h_1\big) \, dz +\frac{1}{|Q_{R_2}|} \Big(\int_{Q_{R_2}} h_2\, dz\Big)^{\hat p}\Big]\\
  &\leq  \lambda^p \Big(\frac{R_2}{r}\Big)^{n+2} \Big(\frac{\bar \lambda}{\lambda}\Big)^{\frac{p}{d}}<  \lambda^p \Big(\frac{10 R_2}{R_2 - R_1}\Big)^{n+2} \frac{1}{\bar B^{\frac{p}{d}}}=\lambda^p
  \end{align*}
for every $\frac{R_2 - R_1}{10} < r\leq R_2 - R_1$. On the other hand,  the Lebesgue differentiation theorem  gives
\begin{align*}
  \liminf_{r\to 0^+}\Bigg[\fint_{Q_{r}^\lambda(z)}\big(g^p + h_1\big)\, dz
  +\frac{1}{|Q_r^\lambda(z)|} \Big(\int_{Q_r^\lambda(z)} h_2\, dz\Big)^{\hat p} \Bigg]
  \geq  g(z)^p
  >\lambda^p.
  \end{align*}
  Thus by continuity, for each $z\in E$  there must exist $r_z\in (0, \frac{R_2 - R_1}{10}]$ such that
  \[
   \fint_{Q_{r_z}^\lambda(z)}\big(g^p + h_1\big)\, dz+\frac{1}{|Q_{r_z}^\lambda(z)|} 
   \Big(\int_{Q_{r_z}^\lambda(z)} h_2\, dz\Big)^{\hat p} 
   =\lambda^p\]
   and
   \[
     \fint_{Q_{r}^\lambda(z)}\big(g^p + h_1\big)\, dz +\frac{1}{|Q_r^\lambda(z)|} \Big(\int_{Q_r^\lambda(z)} h_2\, dz\Big)^{\hat p} <\lambda^p\quad \forall r\in (r_z, R_2 - R_1].
  \]
  Hence by  the Vitali covering lemma, one can extract a countable subcollection  of disjoint {\it intrinsic cylinders}  $\{Q_{r_i}^\lambda(z_i)\}$ satisfying
\[
 \bigcup_{z\in  E } Q_{r_z}^\lambda(z)\subset \bigcup_{i=1}^\infty Q_{5 r_i}^\lambda(z_i).
 \]
  The lemma  then follows since $ E\subset \bigcup_{z\in  E } Q_{r_z}^\lambda(z)$.
  \end{proof}
 
 \subsection{$L^l$ estimates under a decay assumption}
 For a nonnegative function $h$ on $Q_R$ and a number $\lambda>0$, we define
 \[
  E_h(Q_R, \lambda) := \Big\{z\in Q_R:\, h(z) >\lambda \Big\}.
 \]
In the following result, we derive $L^l$ estimates for a general function under 
 a decay assumption for its distribution function.
\begin{proposition}\label{L^lestimatewithoutPDE} 
Let $R>0$ and $\lambda_0>0$.
Let $f,\, g,\,  \hat g$ be  nonnegative Borel measurable functions on $Q_{2R}$, and let $\mu,\,\nu,\, \hat \nu$ be  nonnegative Borel measures 
on $Q_{2R}$.  Assume that there exist  constants $N\geq 1$ and $\alpha>0$ such that for any $0<R_1<R_2\leq 2R$ we have
\begin{equation}\label{densitydecay}
 \mu\big(E_f(Q_{R_1}, 2N \lambda)\big)  \leq \alpha \Big[\mu\big(E_f(Q_{R_2}, \frac{\lambda}{3})\big) 
 +\nu\big(E_{g}(Q_{R_2},\frac{\lambda}{3})\big) +\hat \nu\big(E_{\hat g}(Q_{R_2},\frac{\lambda}{3})\big)^{\hat p}\Big]
 \end{equation}
 for all $ \lambda\geq 
  \lambda_0 \Big(\frac{10 R_2}{R_2 - R_1}\Big)^{\frac{d(n+2)}{p}}$.
   Then for any $l>0$, we obtain
\begin{align*}
 \frac{1}{M^l}\int_{Q_R} f^l \, d\mu
&\leq  (c_0 \lambda_0 ) ^l \mu (Q_R)+   \Bigg[  (c_0 \lambda_0)^l \mu(Q_{2 R})   
+   \int_{Q_{2R}}g^l  d\nu  +\frac{M^l-1}{(M^{\frac{l}{\hat p}} -1)^{\hat p}} \Big(\int_{Q_{2R}}\hat g^{\frac{l}{\hat p}} 
d\hat \nu \Big)^{\hat p}\Bigg] \sum_{j=1}^\infty (\alpha M^l )^j,
 \end{align*}
 where $M:= \max{\{6N, 2^{\frac{d(n+2)}{p}}\}}$ and $c_0 :=3^{-1} 2^{\frac{6d(n+2)}{p}}$.
\end{proposition}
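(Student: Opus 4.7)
The plan is to combine a layer-cake decomposition with an iteration of the decay hypothesis \eqref{densitydecay} over a geometric level sequence and a telescoping sequence of radii, followed by a second layer-cake comparison to repackage the $\nu$- and $\hat\nu$-sums. First I would write
$$\int_{Q_R} f^l\,d\mu \;=\; l\int_0^\infty s^{l-1}\,\mu(E_f(Q_R,s))\,ds,$$
split the integral at the threshold $T=Mc_0\lambda_0$ (bounding $\mu(E_f(Q_R,s))\le \mu(Q_R)$ for $s\le T$), and dyadically decompose the tail on the geometric scale $M$. Setting $\Lambda_k := M^k c_0\lambda_0$ this yields
$$\int_{Q_R} f^l\,d\mu \;\le\; (Mc_0\lambda_0)^l\,\mu(Q_R)\;+\;(M^l-1)(Mc_0\lambda_0)^l\sum_{k\ge 0} M^{lk}\,\mu\bigl(E_f(Q_R,\Lambda_{k+1})\bigr),$$
so after dividing by $M^l$ the first piece already matches $(c_0\lambda_0)^l\mu(Q_R)$.

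Second, I would apply \eqref{densitydecay} iteratively along the shrinking radii $R_k := R(1+2^{-k})\in(R,2R]$, with $R_1=R_{k+1}$, $R_2=R_k$, and $\lambda=\Lambda_{k+1}/(2N)$. Two ingredients make the iteration close. Since $R_k-R_{k+1}=R\,2^{-(k+1)}$, the choice $c_0=3^{-1}2^{6d(n+2)/p}$ together with $M\ge 2^{d(n+2)/p}$ ensures that the admissibility constraint $\Lambda_{k+1}/(2N)\ge\lambda_0(10R_k/(R_k-R_{k+1}))^{d(n+2)/p}$ holds for every $k\ge 0$. The bound $M\ge 6N$ then gives $\Lambda_{k+1}/(6N)\ge \Lambda_k$, so the threshold on the right of the hypothesis can be replaced by $\Lambda_k$, producing
$$\mu\bigl(E_f(Q_{R_{k+1}},\Lambda_{k+1})\bigr) \;\le\; \alpha\bigl[\mu\bigl(E_f(Q_{R_k},\Lambda_k)\bigr) + G_k + H_k\bigr],$$
with $G_k := \nu(E_g(Q_{2R},\Lambda_{k+1}/(6N)))$ and $H_k := \hat\nu(E_{\hat g}(Q_{2R},\Lambda_{k+1}/(6N)))^{\hat p}$. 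Unwinding recursively and using $\mu(E_f(Q_{R_0},\Lambda_0))\le\mu(Q_{2R})$ yields $\mu(E_f(Q_R,\Lambda_{k+1}))\le \alpha^{k+1}\mu(Q_{2R}) + \sum_{i=0}^k\alpha^{k+1-i}(G_i+H_i)$.

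Third, I would substitute into the dyadic sum of step one and swap the summation order, which produces the geometric series $\sum_{j\ge 1}(\alpha M^l)^j$ and reduces the matter to bounding the residual sums $\sum_i M^{li}G_i$ and $\sum_i M^{li}H_i$ in terms of $\int g^l\,d\nu$ and $\int\hat g^{l/\hat p}\,d\hat\nu$. For $G_i$, a second Cavalieri/telescoping estimate,
$$\sum_{i\ge 0} M^{li}\,\nu\bigl(E_g(Q_{2R},M^i\tau)\bigr)\;\le\; \frac{\tau^{-l}}{1-M^{-l}}\int_{Q_{2R}} g^l\,d\nu \qquad\bigl(\tau = Mc_0\lambda_0/(6N)\bigr),$$
delivers the $\int g^l\,d\nu$ contribution. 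For $H_i$ I would exploit $\hat p>1$ through the superadditivity $\sum_i a_i^{\hat p}\le(\sum_i a_i)^{\hat p}$ to move the $\hat p$-th power outside, then apply the analogous telescoping with exponent $l/\hat p$; carrying the algebra through produces the prefactor $\frac{M^l-1}{(M^{l/\hat p}-1)^{\hat p}}$ attached to $(\int \hat g^{l/\hat p}\,d\hat\nu)^{\hat p}$.

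The main obstacle I anticipate is the delicate bookkeeping in step two: one must verify that the admissibility range $\lambda\ge\lambda_0(10R_2/(R_2-R_1))^{d(n+2)/p}$ of \eqref{densitydecay} is preserved at every iteration step while still having the level growth $M^k$ dominate the geometric blow-up of $R_k/(R_k-R_{k+1})$. This is exactly what forces the two-sided choice $M\ge\max\{6N,2^{d(n+2)/p}\}$ and $c_0=3^{-1}2^{6d(n+2)/p}$, and any looser selection would either invalidate the base step of the iteration or destroy the geometric decay that drives the sum. A secondary care is needed when performing the $\hat p$-power conversion in step three so that the constant $\frac{M^l-1}{(M^{l/\hat p}-1)^{\hat p}}$ comes out as stated rather than with an extraneous factor of $M^l$.
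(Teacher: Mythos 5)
Your proposal is correct and follows essentially the same route as the paper's proof: the same telescoping radii $R_k=R(1+2^{-k})$ (the paper's $\rho_m$), the same choice of levels $c_0\lambda_0 M^k$ with $M=\max\{6N,2^{d(n+2)/p}\}$ making the admissibility constraint and the $6N$-step close simultaneously, the same iteration and swap of summation producing $\sum_j(\alpha M^l)^j$, and the same superadditivity-plus-reverse-Cavalieri treatment of the $\hat\nu$ term (the paper's Remark~\ref{rm:lower-est-L^p-norm}). The only differences are cosmetic (order of the layer-cake decomposition versus the iteration, and the exact level retained in $G_k,H_k$), so no further comment is needed.
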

\begin{proof}
For any $m\in \{0, 1,2,\dots\}$, let $\rho_m := R\Big( 3- \sum_{k=0}^{m}\frac{1}{2^k}\Big)$. Then $\rho_0=2R$, and $\rho_m\downarrow R$ as $m\uparrow \infty$. By
using \eqref{densitydecay} for $R_1\rightsquigarrow \rho_{m+1}$ and  $R_2\rightsquigarrow \rho_m$, we have for any $m\geq 0$ that
\begin{equation*}
 \mu\big(E_f(Q_{\rho_{m+1}}, 2N \lambda)\big)  \leq \alpha \Big[\mu\big(E_f(Q_{\rho_m}, \frac{\lambda}{3})\big) +\nu\big(E_g(Q_{\rho_m},\frac{\lambda}{3})\big) +\hat\nu\big(E_{\hat g}(Q_{\rho_m},\frac{\lambda}{3})\big)^{\hat p} \Big]
\end{equation*}
for every $\lambda \geq 2^{\frac{d(n+2)(m+6)}{p}} \lambda_0$. It follows that
\begin{equation*}
 \mu\Big(E_f(Q_{\rho_{m+1}},  c_0 \lambda_0 6N\lambda')\Big)
 \leq \alpha
\left[ \mu\Big(E_f(Q_{\rho_m}, c_0 \lambda_0 \lambda')\Big)  +
\nu\Big(E_g(Q_{\rho_m}, c_0 \lambda_0 \lambda')\Big) 
+
\hat\nu\Big(E_{\hat g}(Q_{\rho_m}, c_0 \lambda_0 \lambda')\Big)^{\hat p} 
 \right]
\end{equation*}
for $\lambda' \geq  2^{\frac{d(n+2)m}{p}}$.
Since $M^m \geq  2^{\frac{d(n+2)m}{p}}$ and  $M\geq 6N$, by taking $\lambda'=M^m$ we thus obtain
\begin{align}\label{one-step}
 &\mu\Big(E_f(Q_{\rho_{m+1}},c_0 \lambda_0  M^{m+1} )\Big)\nonumber\\
 &\leq \alpha  
\left[ \mu\Big(E_f(Q_{\rho_m}, c_0 \lambda_0 M^m)\Big)  +
\nu\Big(E_g(Q_{\rho_m},  c_0 \lambda_0 M^m)\Big) + 
\hat\nu\Big(E_{\hat g}(Q_{\rho_m},  c_0 \lambda_0 M^m)\Big)^{\hat p}\right]
\quad \forall m=0,1,\dots
\end{align}
By iterating and using \eqref{one-step}, we arrive at:
\begin{align*}
 \mu\Big(E_f(Q_{\rho_k},  c_0 \lambda_0 M^k)\Big)
 \leq \alpha^k \mu\big(E_f(Q_{2 R},  c_0 \lambda_0 )\big) + \sum_{i=0}^{k-1} 
 \alpha^{k - i} 
\Big[\nu\Big(E_g(Q_{\rho_i},  c_0 \lambda_0  M^i)\Big)
+  \hat\nu\Big(E_{\hat g}(Q_{\rho_i},  c_0 \lambda_0  M^i)\Big)^{\hat p}\Big].
\end{align*}
In particular,
\begin{equation}\label{iteration}
 \mu\Big(E_f(Q_R, c_0 \lambda_0 M^k )\Big)
 \leq \alpha^k \mu\big(E_f(Q_{2 R}, c_0 \lambda_0 )\big)+ \sum_{i=0}^{k-1} \alpha^{k - i}  I_i
\quad \forall k\geq 1,
\end{equation} 
where $I_i :=\nu\Big(E_g(Q_{2 R},  c_0 \lambda_0 M^i)\Big)
+  \hat\nu\Big(E_{\hat g}(Q_{2 R},  c_0 \lambda_0 M^i)\Big)^{\hat p}$.

Since  
\begin{align*}
\int_{Q_R} f^l \, d\mu  
&=l \int_0^\infty t^{l-1} \mu\Big(\{Q_R: f>t\}\Big)\, dt\\
&=l \int_0^{c_0 \lambda_0 M} t^{l -1} \mu\Big(\{Q_R: f>t\}\Big)\, dt 
+l  \sum_{k=1}^\infty\int_{c_0 \lambda_0 M^{k}}^{c_0 \lambda_0 M^{k+1}} t^{l -1} \mu\Big(\{Q_R: f>t\}\Big)\, dt\\
&\leq (c_0 \lambda_0 M) ^l \mu (Q_R)  + (M^{l} -1) (c_0 \lambda_0)^l\sum_{k=1}^\infty  M^{l k}\mu\Big(E_f(Q_R, c_0 \lambda_0 M^k )\Big),
\end{align*}
we obtain from \eqref{iteration} that
\begin{align*}
\frac{1}{(M^l -1)(c_0 \lambda_0)^l}\Big[\int_{Q_R} f^l \, d\mu &- (c_0 \lambda_0 M) ^l \mu (Q_R) \Big]
\leq  \mu\big(E_f(Q_{2 R}, c_0 \lambda_0 )\big)   \sum_{k=1}^\infty (\alpha M^l )^k + \sum_{k=1}^\infty\sum_{i=0}^{k-1}    M^{lk} \alpha^{k - i} I_i\\
&= \mu\big(E_f(Q_{2 R}, c_0 \lambda_0 )\big)   \sum_{k=1}^\infty (\alpha M^l)^k
+ \sum_{i=0}^\infty M^{l i} I_i \Big(\sum_{k=i+1}^\infty  (\alpha M^l)^{k - i} \Big)\\
&=  \left[ \mu\big(E_f(Q_{2 R}, c_0 \lambda_0 )\big)   
+ \sum_{i=0}^\infty M^{l i} I_i \right]\sum_{j=1}^\infty (\alpha M^l)^j.
\end{align*}
Moreover, as $\hat p>1$ we have
\begin{align*}
\sum_{i=0}^\infty M^{l i} I_i
&\leq 
\sum_{i=0}^\infty M^{l i} \nu\big(E_g(Q_{2 R},  c_0 \lambda_0 M^i)\big) +\Big[\sum_{i=0}^\infty M^{\frac{l}{\hat p} i} 
\hat\nu\big(E_{\hat g}(Q_{2 R},  c_0 \lambda_0 M^i)\big)\Big]^{\hat p}.
\end{align*}
These together with  Remark~\ref{rm:lower-est-L^p-norm} below imply that
\begin{align*}
&\int_{Q_R} f^l \, d\mu
\leq  (c_0 \lambda_0 M) ^l \mu (Q_R)\\
& +   \left[ (M^l -1) (c_0 \lambda_0)^l \mu\big(E_f(Q_{2 R}, c_0 \lambda_0 )\big)   
+  M^l \int_{Q_{2R}}g^l \, d\nu + M^l\frac{M^l-1}{(M^{\frac{l}{\hat p}} -1)^{\hat p}} \Big(\int_{Q_{2R}}\hat g^{\frac{l}{\hat p}} \, d\hat \nu \Big)^{\hat p} \right]\sum_{j=1}^\infty (\alpha M^l )^j.
\end{align*}
This gives the conclusion of the proposition.
\end{proof}

\begin{remark}\label{rm:lower-est-L^p-norm}
Assume that $V\subset \R^n\times\R$, $\nu$ is a nonnegative Borel measure on $V$, and  $g\in L^l_\nu(V)$ for some $l>0$. 
Then for any $\alpha_0>0$ and $M>1$, we have 
\[
(M^l -1) \Big(\frac{\alpha_0}{M}\Big)^l \sum_{i=0}^\infty  M^{l i} \nu\big(\{ V: |g|> \alpha_0 M^i\} \big)
\leq \int_{V}|g|^l \, d\nu.
\]  
Indeed,
\begin{align*}
\int_{V}|g|^l \, d\nu
&=l \int_0^\infty t^{l-1} \nu\big(\{V: |g| >t\}\big)\, dt
\geq 
l \sum_{i=0}^\infty\int_{\alpha_0 M^{i-1}}^{\alpha_0 M^i} t^{l-1} \nu\big(\{V: |g|>t\}\big)\, dt\\
&\geq \sum_{i=0}^\infty \big[(\alpha_0 M^i)^l - (\alpha_0 M^{i-1})^l\big]\, \nu\big(\{ V: |g|> \alpha_0 M^i\} \big).
\end{align*}
\end{remark}

\section{Conditional $L^q$ estimates for spatial gradients}\label{sec:conditionalLq}
In this section, we formulate  a condition guaranteeing $L^q$ estimates for spatial gradients of
weak solutions to 
equation \eqref{mainPDE}.   The verification of this condition for a large class of vector fields will be carried out in
Section~\ref{sec:AppGra}. For a vector field $\mathbf{G}(x,t,u,\xi)$ and a ball $B\subset \R^n$, we define
\[
\mathbf{G}_B(t,u,\xi) :=\fint_{B} \mathbf{G}(x,t,u,\xi)\, dx.
\]

\begin{definition}[local Lipschitz approximation property]\label{locallip-property} Assume  $\A$ satisfies \eqref{structural-reference-1}--\eqref{structural-reference-2}  and $p>1$.  Given $\bar z = (\bar x, \bar t)$, we define
\begin{equation}
\label{eq:tildeA}
\tilde\A(x,t,u,\xi) := \left \{
\begin{array}{lcll}
 \frac{\A(\bar x + \theta x, \, \bar t + \lambda^{2-p} \theta^2 t, \, u, \, \lambda \xi)}{\lambda^{p-1}} &\text{ if}\quad p\geq 2, \\
\frac{\A(\bar x +  \theta \lambda^{\frac{p-2}{2}}  x, \, \bar t + \theta^2 t, \, u, \, \lambda \xi)}{\lambda^{p-1}} &\qquad  \text{if}\quad 1<p<2.
\end{array}\right.
\end{equation}
 We say that $\A$ satisfies the local Lipschitz approximation property with constant 
 $M_0\in (0, \infty]$ if for any $\e>0$, 
 there exists $\delta =\delta(\e,p,n, M_0,\Lambda,\K)>0$ such that: if $\lambda\geq 1$, $0<\theta<2$, $Q_{4\theta}^\lambda(\bar z)
 \subset Q_6$,
\[
  \fint_{Q_4} \Big[
\sup_{u\in \overline \K}\sup_{ \xi\in\R^n}\frac{|\tilde \A(x,t, u,\xi) - \tilde \A_{B_4}(t, u,\xi)|}{1+ |\xi|^{p-1}}
\Big] \, dxdt + \fint_{Q_4}  |\tilde \F|^p \, dz
+\Big(\fint_{Q_4}  |\tilde f|^{\bar p'} \, dz\Big)^{\hat p}\leq \delta^p, 
\]
 and $\tilde u$ is a weak solution  to 
\begin{equation*}
 \tilde u_t  =  \div \tilde \A(z,\theta \hat\lambda \tilde u,D  \tilde u) 
+\div(|\tilde \F|^{p-2} \tilde \F)+\tilde f \quad 
\text{in}\quad Q_4
\end{equation*}
satisfying $\|\tilde u\|_{L^\infty(Q_4)}\leq M_0/\theta\hat\lambda$ and $\fint_{Q_4} |D  \tilde u|^p\, dz\leq 1$, then we have
\begin{equation}\label{eq:appxgoodgradient}
 \fint_{Q_2} |D\tilde  u - \tilde \Psi|^p\, dz\leq  \e^p
 \end{equation}
 for some  function $ \tilde \Psi\in L^\infty(Q_2;\R^n)$ with
  $  \|\tilde \Psi\|_{L^\infty(Q_2)}\leq N$, where $N\geq 1$ is a constant depending only on $p$, $n$, $M_0$, $\Lambda$, and $\K$. Here 
  $\hat \lambda=\lambda$ if $p\geq 2$ and $\hat \lambda=\lambda^{\frac{p}{2}}$ if $1<p<2$.
\end{definition}

  The following main result of the section  shows that 
 the Lipschitz approximation property for the vector field $\A$ 
 implies $L^q$ estimates for gradients of weak solutions to the corresponding
 equation for any $q>p$.
\begin{theorem}\label{thm:conditionalLq}  Assume $p>2n/(n+2)$ and  $\A$ satisfies  the local Lipschitz
approximation property with constant $M_0\in (0, \infty]$. Then for any $q>1$, there exists $\delta >0$ depending 
only on $p$, $q$, $n$, $M_0$, $\Lambda$, and $\K$ 
such that: if
\begin{equation}\label{BMO-1}
 \sup_{\bar z=(\bar x,\bar t)\in Q_3,\,  Q_{\bar z}( r, \theta)\subset Q_6} \fint_{ Q_{\bar z}( r, \theta)} \Big[
\sup_{u\in \overline\K}\sup_{ \xi\in\R^n}\frac{|\A(x,t, u,\xi) -  \A_{B_r(\bar x)}(t,u,\xi)|}{1+ |\xi|^{p-1}}
\Big] \, dx dt\leq \delta^p  
\end{equation}
and $u$ is a weak solution to \eqref{mainPDE} with $\|u\|_{L^\infty(Q_4)}\leq M_0$, we have
\begin{align*}
\int_{Q_3} |D u|^{pq} \, dz
\leq C \Bigg\{ 1+ \Big[\int_{Q_6} (|Du|^p  &+ |\F|^p)\, dz +\Big(\int_{Q_6}|f|^{\bar p'} \, dz \Big)^{\hat p}\Big]^{1+ d(q-1)} 
\\
& +  \int_{Q_6}|\F|^{pq} \, dz
+\Big(\int_{Q_6}|f|^{\bar p' q} \, dz \Big)^{\hat p}\Bigg\}.
\end{align*}
Here $C$ is a positive constant depending only on  $p$, $q$, $n$, $M_0$, $\Lambda$, and $\K$. 
\end{theorem}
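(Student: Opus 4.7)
The strategy is to combine the intrinsic covering of Lemma~\ref{lm:covering} with the local Lipschitz approximation property of Definition~\ref{locallip-property} to establish a density-decay inequality of the form required by Proposition~\ref{L^lestimatewithoutPDE}, and then invoke that proposition to upgrade the decay into the desired $L^{pq}$ estimate on $|Du|$.

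Fix $R_1 < R_2$ with $Q_{R_2} \subset Q_6$; ultimately $R_1 = 3$ and $R_2 = 6$. Applying Lemma~\ref{lm:covering} with $g = |Du|$, $h_1 = |\F|^p$, $h_2 = |f|^{\bar p'}$ produces, for each level $\Lambda \geq \bar B \bar \lambda$, a disjoint family of intrinsic cylinders $\{Q_{r_i}^\Lambda(z_i)\}$ with $r_i \leq (R_2 - R_1)/10$: the stopping average at $r_i$ equals $\Lambda^p$, the same average stays strictly below $\Lambda^p$ on every concentric intrinsic cylinder of radius up to $R_2 - R_1$, and the inflated union $\bigcup_i Q_{5 r_i}^\Lambda(z_i)$ covers every Lebesgue point of $|Du|$ above level $\Lambda$ in $Q_{R_1}$. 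On each $Q_{r_i}^\Lambda(z_i)$ I would use Lemma~\ref{lm:scaling} with $\theta = 5 r_i/2$ and $r = 4$, so that $Q_4$ in the rescaled coordinates corresponds to $Q_{10 r_i}^\Lambda(z_i) \subset Q_{R_2}$ and $Q_2$ to $Q_{5 r_i}^\Lambda(z_i)$. The upper-bound part of Lemma~\ref{lm:covering}(2) at radius $10 r_i$ then yields $\fint_{Q_4}|D\tilde u|^p + \fint_{Q_4}|\tilde\F|^p + (\fint_{Q_4}|\tilde f|^{\bar p'})^{\hat p} < 1$ (these averages rescale by $\Lambda^{-p}$), the smallness condition~\eqref{BMO-1} transfers to $\tilde \A$ on $Q_4$ thanks to $\Lambda \geq 1$ and $p > 1$ giving $\Lambda^{p-1} + |\xi|^{p-1} \geq 1 + |\xi|^{p-1}$, and $\|\tilde u\|_{L^\infty(Q_4)} \leq M_0/(\theta \hat\lambda)$ follows directly from $\|u\|_{L^\infty(Q_4)} \leq M_0$.

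I now split into two subcases on each cylinder. If the data contribution $\fint_{Q_4}|\tilde \F|^p + (\fint_{Q_4}|\tilde f|^{\bar p'})^{\hat p}$ is at most a fixed fraction of $\delta^p$, Definition~\ref{locallip-property} applies and yields $\tilde \Psi$ on $Q_2$ with $\|\tilde \Psi\|_\infty \leq N$ and $\fint_{Q_2}|D\tilde u - \tilde \Psi|^p \leq \e^p$; after unscaling, $\Psi$ on $Q_{5r_i}^\Lambda(z_i)$ satisfies $|\Psi| \leq N\Lambda$ and $\fint|Du - \Psi|^p \leq (\e\Lambda)^p$. On $\{|Du| > 2N\Lambda\}$ the triangle inequality gives $|Du| \leq 2|Du - \Psi|$, so integration yields $\int_{\{|Du|>2N\Lambda\}\cap Q_{5r_i}^\Lambda}|Du|^p \leq 2^p(\e\Lambda)^p|Q_{5r_i}^\Lambda|$. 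In the complementary subcase the data term dominates, so the corresponding contribution is absorbed directly into the $|\F|$-or-$|f|$ superlevel-set integrals by a standard level splitting. Summing over the disjoint family $\{Q_{r_i}^\Lambda\}$, using the case analysis at the stopping radius (where one of the three averages is at least $\Lambda^p/3$) together with the superadditivity $\sum_i a_i^{\hat p} \leq (\sum_i a_i)^{\hat p}$ for $\hat p > 1$ to handle the $|f|^{\bar p'}$ term, produces the density-decay bound
\[
\int_{\{|Du|>2N\Lambda\}\cap Q_{R_1}}|Du|^p\,dz \leq C \e^p \Bigl[\int_{\{|Du|>\Lambda/3\}\cap Q_{R_2}}|Du|^p + \int_{\{|\F|>\Lambda/3\}\cap Q_{R_2}}|\F|^p + \Bigl(\int_{\{|f|^{\bar p'}>c\Lambda^{p/\hat p}\}\cap Q_{R_2}}|f|^{\bar p'}\Bigr)^{\hat p}\Bigr]
\]
for all $\Lambda$ above the threshold prescribed by Proposition~\ref{L^lestimatewithoutPDE}. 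This matches~\eqref{densitydecay} with the weighted measures $d\mu = |Du|^p\,dz$, $d\nu = |\F|^p\,dz$, $d\hat\nu = |f|^{\bar p'}\,dz$, functions $f_P = |Du|$, $g_P = |\F|$, $\hat g_P = c^{-1}|f|^{\bar p'\hat p/p}$, and exponent $l = p(q-1)$. Choosing $\delta$ small enough that $\alpha := C\e^p$ satisfies $\alpha M^l < 1$, Proposition~\ref{L^lestimatewithoutPDE} produces the claimed $L^{pq}$ bound on $|Du|$ via $\int f_P^l\,d\mu = \int|Du|^{pq}$, $\int g_P^l\,d\nu = \int|\F|^{pq}$, and $(\int \hat g_P^{l/\hat p}\,d\hat\nu)^{\hat p}$ comparable to $(\int|f|^{\bar p' q})^{\hat p}$.

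The main obstacle is the density-decay step: converting the integral-type stopping information of Lemma~\ref{lm:covering} into the measure-theoretic inequality~\eqref{densitydecay}. The choice of weighted measures above is what aligns the two formulations — the weights $|Du|^p$, $|\F|^p$, $|f|^{\bar p'}$ absorb the factor $\Lambda^p$ that remains after Chebyshev — while superadditivity in $\hat p$ handles the source term. A secondary subtlety, already built into Definition~\ref{locallip-property}, is the uniformity of the constants $N$ and $\delta$ with respect to the two scaling parameters $\Lambda$ and $\theta$; establishing that uniformity is precisely the content of Section~\ref{sec:AppGra}.
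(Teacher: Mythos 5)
Your overall architecture --- intrinsic covering, rescaling to $Q_4$ with $\theta=5r_i/2$, the Lipschitz approximation on each stopping cylinder, Chebyshev, and then Proposition~\ref{L^lestimatewithoutPDE} with weighted measures and $l=p(q-1)$ --- is exactly the paper's, and most individual steps (the transfer of \eqref{BMO-1} under rescaling, the bound $\int_{\{|Du|>2N\lambda\}\cap Q_{5r_i}^\lambda}|Du|^p\leq C\e^p\lambda^p|Q_{5r_i}^\lambda|$ on good cylinders, the superadditivity in $\hat p$) are correct. The one place you deviate is the treatment of the data terms, and there the argument as written does not deliver the inequality you claim.

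You run the covering lemma with the unweighted functions $h_1=|\F|^p$, $h_2=|f|^{\bar p'}$ and then split the stopping cylinders into ``good'' ones (rescaled data at most a fraction of $\delta^p$, where Definition~\ref{locallip-property} applies) and ``bad'' ones. On a bad cylinder there is no approximating $\tilde\Psi$, so the only available bound for $\int_{Q_{5r_i}^\lambda\cap E(2N\lambda)}|Du|^p$ is the crude one $\lambda^p|Q_{10r_i}^\lambda|$ with no factor $\e^p$; the badness condition then converts $|Q_{10r_i}^\lambda|$ into data integrals at the cost of a factor $\delta^{-p}$, and the level splitting forces superlevel sets of the form $\{|\F|>c\,\delta\lambda\}$ rather than $\{|\F|>\lambda/3\}$. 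Hence the density-decay inequality your argument actually produces carries the coefficient $C\delta^{-p}$ (not $C\e^p$) in front of the $\F$- and $f$-terms. This is not of the form \eqref{densitydecay}: Proposition~\ref{L^lestimatewithoutPDE} requires a single $\alpha$ multiplying all three terms, and taking $\alpha=C\delta^{-p}$ destroys the convergence condition $\alpha M^l<1$, since $\delta\to 0$ as $\e\to 0$. The approach is salvageable --- one can prove a two-constant variant of Proposition~\ref{L^lestimatewithoutPDE}, since the large constant only multiplies the non-compounding source terms in the iteration \eqref{iteration} --- but the cleaner fix is the paper's: run Lemma~\ref{lm:covering} with $h_1=\delta^{-p}|\F|^p$ and $h_2=\delta^{-p/\hat p}|f|^{\bar p'}$, and use $\nu(dz)=|\F/\delta|^p\,dz$ and $\hat\nu(dz)=\delta^{-p/\hat p}|f|^{\bar p'}\,dz$. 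With that weighting the stopping-time property automatically yields $\fint_{Q_4}|\tilde\F|^p<\delta^p$ and $\big(\fint_{Q_4}|\tilde f|^{\bar p'}\big)^{\hat p}<\delta^p$ on every stopping cylinder, so the dichotomy disappears, every cylinder is ``good,'' and the factor $\delta^{-pq}$ is simply absorbed into the final constant $C$.
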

\begin{proof}
Let  $\e>0$ be determined later, and let  $\delta =\delta(\e,p,n, M_0,\Lambda,\K)>0$ be the corresponding constant given by Definition~\ref{locallip-property}.
Let $0<R_1<R_2\leq 6$,
\[
  \lambda_0^{\frac{p}{d}} := \fint_{Q_6}\big(|D u|^p + \frac{1}{\delta^p} |\F|^p+1\big)\, dz
  +\frac{1}{\delta^p}\frac{1}{|Q_6|} \Big(\int_{Q_6} |f|^{\bar p'}\, dz\Big)^{\hat p},\quad \mbox{and}\quad \bar B^{\frac{p}{d}}:=\Big(\frac{10 R_2}{R_2 - R_1}\Big)^{n+2}.
 \]
Also,  let us denote
 $ E(Q_{R_1}, \lambda) := \Big\{z\in Q_{R_1}:\, z\mbox{ is a Lebesgue point of $|Du|$ and } |Du(z)| >\lambda \Big\}$.
Then for any  $\lambda\geq \bar B \lambda_0$, we can apply Lemma~\ref{lm:covering} for  
$g\rightsquigarrow |Du|$, $h_1\rightsquigarrow \delta^{-p} |\F|^p$, and $h_2\rightsquigarrow \delta^{-\frac{p}{\hat p}} |f|^{\bar p'}$
to  obtain:
 there exists a sequence of disjoint {\it intrinsic cylinders}  $\{Q_{r_i}^\lambda(z_i)\}$ with 
$z_i=(x_i, t_i)\in Q_{R_1}$ and $r_i\in (0, \frac{R_2 - R_1}{10}]$
that satisfies the following properties
\begin{enumerate}
 \item[1)] 
$ E(Q_{R_1}, \lambda) \subset \bigcup_{i=1}^\infty Q_{5 r_i}^\lambda(z_i)$.
\item[2)] $\fint_{Q_{r_i}^\lambda(z_i)}\Big(|Du|^p +  
\frac{1}{\delta^p} |\F|^p\Big)\, dz
+\frac{1}{\delta^p}\frac{1}{|Q_{r_i}^\lambda(z_i)|} \Big(\int_{Q_{r_i}^\lambda(z_i)} |f|^{\bar p'}\, dz\Big)^{\hat p}= \lambda^p$ for each $i$.
\item[3)] $\fint_{Q_{r}^\lambda(z_i)}\Big(|Du|^p+  \frac{1}{\delta^p} |\F|^p\Big)\, dz
+\frac{1}{\delta^p}\frac{1}{|Q_{r}^\lambda(z_i)|} \Big(\int_{Q_{r}^\lambda(z_i)} |f|^{\bar p'}\, dz\Big)^{\hat p}<\lambda^p$ for every 
$r\in (r_i, R_2 - R_1]$.
\end{enumerate}

Now let us fix $i$, and note that $Q_{10 r_i}^\lambda(z_i)\subset Q_{R_2}\subset Q_6$ as $z_i\in Q_{R_1}$ and $10 r_i\leq R_2 - R_1$. 
Let $\tilde u$,  $\tilde \F$, and $\tilde \A$ be defined as in 
Lemma~\ref{lm:scaling} with  $\bar x=x_i,\, \bar t=t_i$,  and $\theta = 5 r_i/2$. Then by Lemma~\ref{lm:scaling}, we see that
  $\tilde u$ is a weak solution to the equation
\begin{equation*}
\tilde u_t  = \div \tilde \A(z,\theta \hat\lambda \tilde u,D  \tilde u)  
+\div(|\tilde \F|^{p-2} \tilde \F) +\tilde f \quad 
\text{in}\quad Q_4.
\end{equation*}
Observe that $\|\tilde u\|_{L^\infty(Q_4)}\leq M_0/\theta\hat\lambda$. Using $D \tilde u =\frac{1}{\lambda} D u ()$ and the definitions of $\tilde \F$ and $ \tilde f$, we deduce from property 3)   that
\begin{align*} 
 &\fint_{Q_4} |D \tilde u(x,t)|^p\, dx dt =\frac{1}{\lambda^p} \fint_{Q_{10 r_i}^\lambda(z_i)} |D u(z)|^p\, dz <1,\\
  &\fint_{Q_4} | \tilde \F(x,t)|^p\, dx dt =\frac{1}{\lambda^p} \fint_{Q_{10 r_i}^\lambda(z_i)} |\F(z)|^p\, dz <\frac{1}{\lambda^p} \delta^p \lambda^p = \delta^p,
\end{align*}
and 
\begin{align*}
\fint_{Q_4} | \tilde f(x,t)|^{\bar p'}\, dx dt &=\frac{1}{\lambda^p}\big[2^{-1}|Q_{\frac{5 r_i}{2}}^\lambda(z_i)|\lambda^p\big]^{\frac{\bar p'}{n+2}} \fint_{Q_{10 r_i}^\lambda(z_i)} |f(z)|^{\bar p'}\, dz \\
&\leq  \frac{\lambda^{p(\frac{\bar p'}{n+2}-1)}}{|Q_{10 r_i}^\lambda(z_i)|^{1-\frac{\bar p'}{n+2}}}\int_{Q_{10 r_i}^\lambda(z_i)} |f(z)|^{\bar p'}\, dz
=\frac{\lambda^{-\frac{p}{\hat p}}}{|Q_{10 r_i}^\lambda(z_i)|^{\frac{1}{\hat p}}}\int_{Q_{10 r_i}^\lambda(z_i)} |f(z)|^{\bar p'}\, dz
< \delta^{\frac{p}{\hat p}},
\end{align*}
Moreover, as $\lambda\geq 1$ it is clear that
\begin{align*}
  &\fint_{Q_4} \Big[
\sup_{u\in \overline \K}\sup_{ \xi\in \R^n}\frac{|\tilde \A(x,t,u,\xi) - \tilde \A_{B_4}(t,u,\xi)|}{1+ |\xi|^{p-1}}
\Big] \, dx dt\\
&= \fint_{Q_{10 r_i}^\lambda(z_i)} \Big[
\sup_{u\in \overline \K}\sup_{ \eta\in\R^n}\frac{| \A(y,s, u,\eta) -  \A_{B}(s, u,\eta)|}{\lambda^{p-1} + |\eta|^{p-1}}
\Big] \, dy ds\leq \delta^p
\end{align*}
thanks to condition \eqref{BMO-1}, where the ball $B$ is the projection of $Q_{10 r_i}^\lambda(z_i)\subset\R^n\times \R$ onto $\R^n$.
Since $\A$ satisfies the local Lipschitz approximation property, we conclude that
there exists a function $\tilde \Psi_i\in  L^\infty(Q_2;\R^n)$ such that
\[  \|\tilde \Psi_i\|_{L^\infty(Q_2)}\leq N\quad\mbox{and}\quad 
 \fint_{Q_2} |D\tilde u - \tilde \Psi_i|^p\, dz\leq  \e^p
 \]
with $N\geq 1$ depending only on $p$, $n$, $M_0$, $\Lambda$, and $\K$.
Let us rescale back by defining
\begin{equation*}
\Psi_i(y,s) := \left \{
\begin{array}{lcll}
\lambda\, \tilde \Psi_i (\frac{y-x_i}{\theta}, \frac{s - t_i}{\lambda^{2-p} \theta^2}) &\text{if}\quad p\geq 2, \\
 \lambda\, \tilde \Psi_i \Big(\frac{y-x_i}{\theta \lambda^{\frac{p-2}{2}}}, \frac{s - t_i}{ \theta^2}\Big) &\qquad  \, \text{ if}\quad \frac{2n}{n+2}<p<2.
\end{array}\right.
\end{equation*}
Then we obtain
\begin{equation}
\label{eq:goodappro}
 \|\Psi_i\|_{L^\infty(Q_{ 5 r_i}^\lambda(z_i))}\leq N \lambda
\quad \mbox{and}\quad \fint_{Q_{ 5 r_i}^\lambda(z_i)} |Du - \Psi_i|^p\, dz\leq \lambda^p \e^p. 
 \end{equation}
As a consequence, we get
\begin{align}\label{density-1}
 \big| Q_{5 r_i}^\lambda(z_i) \cap E(Q_{R_1}, 2N \lambda)\big|
 &\leq\big|\{z\in Q_{5  r_i}^\lambda(z_i):\,  |(Du -\Psi_i) (z)|>N \lambda \}\big|\nonumber\\
 &\leq \frac{1}{N^p \lambda^p}   \int_{Q_{5 r_i}^\lambda(z_i)} |Du - \Psi_i|^p\, dz  \leq \Big(\frac{\e}{N}\Big)^p  |Q_{5 r_i}^\lambda(z_i)|.
\end{align}
We next estimate $|Q_{5 r_i}^\lambda(z_i)|=5^{n+2} |Q_{r_i}^\lambda(z_i)|$ on the above right hand side. Setting
 $\hat f(z) :=\delta^{-1} |f(z)|^{\frac{\bar p' \hat p}{p}}$. Then from property 2) and since $\hat p>1$, we have
\begin{align*}
&|Q_{r_i}^\lambda(z_i)| =\frac{1}{\lambda^p} \int_{Q_{r_i}^\lambda(z_i)}\big(|Du|^p +  \frac{1}{\delta^p} |\F|^p  \big)\, dz
+\frac{1}{\lambda^p} \Big(\int_{Q_{r_i}^\lambda(z_i)}
\hat f^{\frac{p}{\hat p}}\, dz\Big)^{\hat p}\\
&\leq \frac{1}{\lambda^p}\left[ \int_{\{Q_{r_i}^\lambda(z_i): |Du|> \frac{\lambda}{3}\}} |Du|^p \, dz  +
\frac{1}{\delta^p} \int_{\{Q_{r_i}^\lambda(z_i): |\F|> \frac{\delta \lambda }{3}\}} |\F|^p\, dz +
 \Big(\int_{\{Q_{r_i}^\lambda(z_i): \hat f> \frac{\lambda }{3}\}}
\hat f^{\frac{p}{\hat p}}\, dz\Big)^{\hat p}\right]
+\frac{1}{3^{p-1}} |Q_{r_i}^\lambda(z_i)|.
\end{align*}
It follows that
\begin{equation}\label{eq:Q-i}
|Q_{r_i}^\lambda(z_i)| 
\leq \frac{C_p}{\lambda^p}\left[ \int_{\{Q_{r_i}^\lambda(z_i): |Du|> \frac{\lambda}{3}\}} |Du|^p \, dz  +
\frac{1}{\delta^p} \int_{\{Q_{r_i}^\lambda(z_i): |\F|> \frac{\delta \lambda }{3}\}} |\F|^p\, dz +
 \Big(\int_{\{Q_{r_i}^\lambda(z_i): \hat f > \frac{\lambda }{3}\}}
\hat f^{\frac{p}{\hat p}}\, dz\Big)^{\hat p}\right].
\end{equation}
Using \eqref{eq:goodappro}--\eqref{eq:Q-i}, we deduce that
\begin{align*}
&\int_{ Q_{5 r_i}^\lambda(z_i) \cap E(Q_{R_1}, 2N \lambda)} |Du|^p dz
\leq 2^{p-1} \Big( \int_{ Q_{5 r_i}^\lambda(z_i) \cap  E(Q_{R_1}, 2N \lambda)} |Du - \Psi_i|^p dz + \int_{ Q_{5 r_i}^\lambda(z_i) \cap  E(Q_{R_1}, 2N \lambda)} 
| \Psi_i|^p dz \Big)\\
&\leq 2^{p-1} \Big(\lambda^p \e^p |Q_{5 r_i}^\lambda(z_i)|  + (N\lambda)^p \, | Q_{5 r_i}^\lambda(z_i) \cap E(Q_{R_1}, 2N\lambda)|\Big)
\leq 2^p \lambda^p \e^p   |Q_{5 r_i}^\lambda(z_i)|\\
&\leq C(n,p) \e^p \left[ \int_{\{Q_{r_i}^\lambda(z_i): |Du|> \frac{\lambda}{3}\}} |Du|^p \, dz  +
\frac{1}{\delta^p} \int_{\{Q_{r_i}^\lambda(z_i): |\F|> \frac{\delta \lambda }{3}\}} |\F|^p\, dz +
 \Big(\int_{\{Q_{r_i}^\lambda(z_i): \hat f> \frac{\lambda }{3}\}}
\hat f^{\frac{p}{\hat p}}\, dz\Big)^{\hat p}\right].
\end{align*}
Since $ E(Q_{R_1}, 2N \lambda)  \subset E(Q_{R_1}, \lambda) \subset \bigcup_{i=1}^\infty Q_{5 r_i}^\lambda(z_i)$ and $\{Q_{r_i}^\lambda(z_i)\}$ is 
disjoint, by taking the sum over $i$ we obtain  
\begin{equation*}
 \int_{E(Q_{R_1}, 2N \lambda)} |Du |^p\, dz
 \leq C(n,p) \e^p 
\left[ \int_{\{Q_{R_2}: |Du|> \frac{\lambda}{3}\}} |Du|^p  dz  +
\frac{1}{\delta^p} \int_{\{Q_{R_2}: |\F|> \frac{\delta \lambda }{3}\}} |\F|^p dz +
 \Big(\int_{\{Q_{R_2}: \hat f> \frac{\lambda }{3}\}}
\hat f^{\frac{p}{\hat p}} dz\Big)^{\hat p}\right]
\end{equation*}
for all $ \lambda \geq \bar B \lambda_0$. Therefore, we can  apply Proposition~\ref{L^lestimatewithoutPDE} with $l:=p(q-1)$,
$ \mu(dz) := |Du(z)|^p \, dz$,  $\nu(dz) := |\frac{\F(z)}{\delta}|^p\, dz$, and $\hat\nu(dz) :=\hat f^{\frac{p}{\hat p}} \, dz$ to  conclude that
 \begin{align*}
 \frac{1}{M^l}\int_{Q_3} |Du|^l \, d\mu
&\leq  (c_0 \lambda_0 ) ^l \mu (Q_3)\\
 &+   \Bigg[  (c_0 \lambda_0)^l \mu(Q_6)   
+   \int_{Q_6}\big|\frac{\F}{\delta}\big|^l \, d\nu  +\frac{M^l-1}{(M^{\frac{l}{\hat p}} -1)^{\hat p}} \Big(\int_{Q_6}
\hat f^{\frac{l}{\hat p}} \, d\hat \nu \Big)^{\hat p}\Bigg]\sum_{j=1}^\infty \big[C(n,p)\e^p M^l \big]^j,
 \end{align*}
 where $M:= \max{\{6N, 2^{\frac{d(n+2)}{p}}\}}$ and $c_0 :=3^{-1} 2^{\frac{6d(n+2)}{p}}$.
  Let us now choose $\e>0$ such that
\[
C(n,p) \,  \e^p M^l = \frac{1}{2}.
\]
Then with the corresponding $\delta$, we obtain
\begin{align*}
\frac{1}{M^l}\int_{Q_3} |D u|^{pq} \, dz
&\leq 2 (c_0 \lambda_0 )^l \int_{Q_6} |Du|^p\, dz   
+  \int_{Q_6}\big|\frac{\F}{\delta}\big|^{pq} \, dz
+\frac{(M^l-1)}{(M^{\frac{l}{\hat p}} -1)^{\hat p}\delta^{pq}} \Big(\int_{Q_6}|f|^{\bar p' q} \, dz \Big)^{\hat p}.
\end{align*}
This together with the definition of $\lambda_0$ yields the conclusion of the theorem.
\end{proof}

\section{Approximating gradients of solutions}\label{sec:AppGra}
The purpose of this section is to verify the local Lipschitz approximation property for a large class of vector fields and 
then employ Theorem~\ref{thm:conditionalLq} to obtain $L^q$ estimates for spatial gradients of weak solutions to the corresponding equations. To achieve this and for the first time, the structural condition \eqref{structural-reference-3} shall be used.
 Throughout this section,
 let $\omega: [0,\infty)\to [0,\infty)$ be the bounded function defined  by
\begin{equation*}\label{modified-Lipschitz}
\omega(r)= \left \{
\begin{array}{lcll}
r \Lambda  \qquad  \text{if}\quad 0\leq r \leq 2, \\
2\Lambda  \qquad\text{if}\quad r>2.
\end{array}\right.
\end{equation*}
Notice that if $\A$ satisfies \eqref{structural-reference-2} and \eqref{structural-reference-3}, then we obtain from the definition
of $\omega$ that
\begin{equation}\label{structural-reference-3-reformulation}
 |\A(z,u_1,\xi)-\A(z,u_2,\xi)|  \leq  \omega(|u_1 -u_2|)\,\, \big(1+ |\xi|^{p-1}\big) \qquad\quad\qquad \,\,\,\,\forall u_1, u_2\in \overline\K. 
\end{equation}
For this reason, \eqref{structural-reference-3} and \eqref{structural-reference-3-reformulation} will be used  
interchangeably.
 Our aim is to approximate  $D u$ 
by a good vector function in $L^p$ norm, and the following lemma is the starting point for that purpose. Let us  define
\begin{equation}\label{oscillating-fn}
\d_{\A,\hat\A}(z) := \sup_{u\in \overline\K}\sup_{ \xi\in \R^n}\frac{|\A(z, u,\xi) - \hat\A(z,u,\xi)|}{1+ |\xi|^{p-1}}.
\end{equation}

\begin{lemma}\label{lm:compare-gradient-part1}
Assume that $\alpha>0$ and $\A$, $\hat \A$ satisfy \eqref{structural-reference-1}--\eqref{structural-reference-2}.
 Assume in addition that $\hat \A$ satisfies \eqref{structural-reference-3}.
Suppose  $u$
is a weak solution  of \eqref{eq:uQ4}
with  
\begin{equation}\label{ass:uF}
\fint_{Q_4}{|D u|^p\, dz}\leq C(\Lambda,p,n)
\quad \mbox{ and }\quad \fint_{Q_4}{|\F|^p\, dz}+\fint_{Q_4}  |f|^{\bar p'} \, dz\leq 1,
\end{equation}
and 
 $h$  is a weak solution of 
\begin{equation*}
\left \{
\begin{array}{lcll}
h_t &=&\div  \hat\A(z,\alpha h,D h)  \quad &\text{in}\quad Q_3, \\
h & =& u\quad &\text{on}\quad \partial_p Q_3.
\end{array}\right.
\end{equation*}
Let $m:=u-h$. Then there exist  positive constants $C,\, \e_0$ depending only on $p$, $n$, and $\Lambda$ such that 
\begin{enumerate}
 \item[(i)] If $p\geq 2$, then 
 \begin{align*}
 \sup_{t\in (-4,4)}\int_{B_2} 
m(x,t)^2\, dx
&+\int_{Q_2}{ |D m|^p dz }\leq C\Big(\|m\|_{L^p(Q_{\frac52})}^p + \|m\|_{L^p(Q_{\frac52})} +\|m\|_{L^2(Q_{\frac52})}^2\Big)\\
&+  C \Bigg\{ \Big(\int_{Q_\frac52}{\big[
 \omega
(\alpha |m|) + \d_{\A,\hat\A} \big]  \, dz}\Big)^{\frac{\e_0}{p+\e_0}} 
+\|\F\|_{L^p(Q_{\frac52})}^p +\Big(\int_{Q_\frac52}  |f|^{\bar p'} \, dz\Big)^{\hat p}\Bigg\}.
 \end{align*}
\item[(ii)] If $1<p<2$, then 
\begin{align*}
\sup_{t\in (-4,4)}&\int_{B_2} 
m(x,t)^2\, dx +\sigma \int_{Q_2} |Dm|^p  dz
\leq  C \sigma^{\frac{2}{2-p}} +C\Big(\|m\|_{L^p(Q_{\frac52})}^p + \|m\|_{L^p(Q_{\frac52})} +\|m\|_{L^2(Q_{\frac52})}^2\Big)\\
&+ C \sigma^{\frac{-1}{p-1}} \Bigg\{ \Big(\int_{Q_\frac52}{\big[
 \omega
(\alpha | m|) + \d_{\A,\hat\A}  \big]  \, dz}\Big)^{\frac{\e_0}{p+\e_0}} +\|\F\|_{L^p(Q_{\frac52})}^p
\Bigg\}
+C \sigma^{\frac{-(p+n)}{p(n+1)-n}}\Big(\int_{Q_\frac52}  |f|^{\bar p'} \, dz\Big)^{\hat p} 
\end{align*}
for every $\sigma>0$ small.
\end{enumerate}
\end{lemma}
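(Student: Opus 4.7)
The plan is to write $m = u - h$ as the weak solution (on $Q_3$, vanishing on $\partial_p Q_3$) of
\begin{equation*}
m_t = \div\bigl[\hat\A(z,\alpha u, Du)-\hat\A(z,\alpha u, Dh)\bigr] + \div\bigl[\hat\A(z,\alpha u, Dh)-\hat\A(z,\alpha h, Dh)\bigr] + \div\bigl[\A(z,\alpha u, Du)-\hat\A(z,\alpha u, Du)\bigr] + \div(|\F|^{p-2}\F)+f,
\end{equation*}
test against $m\varphi^p$ for a cutoff $\varphi\in C_0^\infty(Q_{5/2})$ equal to $1$ on $Q_2$ (using the Steklov-average justification of Lemma~\ref{gradient-est-II}), and control each of the resulting error terms by one of three mechanisms: coercivity from \eqref{structural-reference-1}, the standard $\F$/$f$ treatment of Lemma~\ref{energy-estimate}, and a H\"older--higher-integrability argument that produces the $\e_0/(p+\e_0)$-power factor. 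The split into $p\geq 2$ and $1<p<2$ occurs at the coercivity step.

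For the coercivity, when $p\geq 2$ the structural condition \eqref{structural-reference-1} immediately yields the lower bound $\Lambda^{-1}\int|Dm|^p\varphi^p\,dz$. When $1<p<2$ I would apply Lemma~\ref{simple-est} with $(\xi,\eta)=(Du,Dh)$ and $\tau$ chosen so that $c\tau^{(2-p)/p}=2\sigma$, producing a coercive $\sigma\int|Dm|^p\varphi^p\,dz$ at the cost of an absorbable $C\sigma^{2/(2-p)}\int_{Q_{5/2}}(1+|Du|^p)\,dz$, which hypothesis \eqref{ass:uF} bounds by $C\sigma^{2/(2-p)}$.

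The heart of the argument is the treatment of the two \emph{oscillation} errors. By \eqref{structural-reference-3-reformulation} and \eqref{oscillating-fn} one has pointwise
\begin{equation*}
|\hat\A(z,\alpha u,Dh)-\hat\A(z,\alpha h,Dh)|\leq \omega(\alpha|m|)(1+|Dh|^{p-1}),\qquad |\A(z,\alpha u,Du)-\hat\A(z,\alpha u,Du)|\leq \d_{\A,\hat\A}(z)(1+|Du|^{p-1}).
\end{equation*}
Young's inequality transfers the $|Dm|$ factor partly into the coercive side (with weight $1$ when $p\geq 2$ and weight $\sim\sigma^{-1/(p-1)}$ when $1<p<2$), leaving a residual dominated by $C_\sigma \int_{Q_{5/2}}[\omega(\alpha|m|)+\d_{\A,\hat\A}](1+|Du|^p+|Dh|^p)\,dz$. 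H\"older with conjugate exponents $(p+\e_0)/p$ and $(p+\e_0)/\e_0$, combined with the fact that $\omega\leq 2\Lambda$ and $\d_{\A,\hat\A}\leq 2\Lambda$ are bounded (so $X^{(p+\e_0)/\e_0}\leq C\,X$), reduces this to
\begin{equation*}
C_\sigma\Bigl(\int_{Q_{5/2}}(1+|Du|^{p+\e_0}+|Dh|^{p+\e_0})\,dz\Bigr)^{p/(p+\e_0)}\Bigl(\int_{Q_{5/2}}[\omega(\alpha|m|)+\d_{\A,\hat\A}]\,dz\Bigr)^{\e_0/(p+\e_0)}.
\end{equation*}
Theorem~\ref{thm:higherint} applied to $u$ (using \eqref{ass:uF}) and its analogue for the homogeneous equation solved by $h$---combined with Lemma~\ref{gradient-est-II} to control $\fint|Dh|^p$ by $\fint|Du|^p$ and the data---render the first factor bounded by $C(p,n,\Lambda)$, delivering exactly the $\e_0/(p+\e_0)$-power factor in the statement.

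The $\F$- and $f$-contributions, together with the lower-order $|m||D\varphi|\varphi^{p-1}$ pieces coming from the product rule, are handled as in Lemma~\ref{energy-estimate} (in particular \eqref{f-sobolev} for $\int|f||m|\varphi^p$), producing the $\|m\|_{L^p}^p$, $\|m\|_{L^p}$, $\|m\|_{L^2}^2$, $\|\F\|_{L^p}^p$ and $\bigl(\int|f|^{\bar p'}\bigr)^{\hat p}$ terms with the correct $\sigma$-weights. After absorbing the $\sigma$-fraction of $\int|Dm|^p\varphi^p$ and taking $\sup_{s\in(-4,4)}$ on both sides, both stated inequalities follow. The main obstacle is the $\sigma$-bookkeeping in the $1<p<2$ case: Young's inequality must be applied \emph{separately} to the $|Dm|$- and $|m|$-pieces of each oscillation error so that only the advertised $\sigma^{-1/(p-1)}$ ends up in front of the crucial $\e_0/(p+\e_0)$-power, while the $\sigma^{-(p+n)/[p(n+1)-n]}$ weight arises solely from the Sobolev step \eqref{f-sobolev} applied to the $fm\varphi^p$ integral; a secondary check is that the higher-integrability constant for $h$ is $\sigma$-independent, which holds because $h$ solves a homogeneous equation of the class covered by Theorem~\ref{thm:higherint} and its energy is controlled by that of $u$ through Lemma~\ref{gradient-est-II}.
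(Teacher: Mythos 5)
Your overall strategy (test with $m\varphi^p$, extract coercivity, control the two oscillation errors by Young plus H\"older against a higher--integrability bound, absorb the supremum) is the same as the paper's, but your specific three--term decomposition creates a genuine gap. You place the $\A$--versus--$\hat\A$ discrepancy at the gradient $Du$, i.e.\ you estimate $|\A(z,\alpha u,Du)-\hat\A(z,\alpha u,Du)|\leq \d_{\A,\hat\A}(1+|Du|^{p-1})$, so after Young's inequality the residual is $C_\sigma\int \d_{\A,\hat\A}^{p/(p-1)}(1+|Du|^p)\,dz$, and your H\"older step then needs $\int_{Q_{5/2}}|Du|^{p+\e_0}\,dz\leq C(p,n,\Lambda)$. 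That bound is not available under \eqref{ass:uF}: the conclusion \eqref{higher-in} of Theorem~\ref{thm:higherint} applied to $u$ carries the terms $\int|\F|^{p+\e_0}\,dz$ and $\big(\int|f|^{\bar p'(1+\e_0/p)}\,dz\big)^{\hat p}$ on its right-hand side, and the lemma's hypotheses only control $\|\F\|_{L^p}$ and $\|f\|_{L^{\bar p'}}$. So the first H\"older factor in your key display cannot be bounded by a constant, and the crucial $\e_0/(p+\e_0)$-power estimate does not follow.

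The fix is exactly the paper's choice of decomposition: take the coercive term to be $\langle \A(z,\alpha u,Du)-\A(z,\alpha u,Dh),Dm\rangle$ and write the remaining error as $[\hat\A(z,\alpha h,Dh)-\hat\A(z,\alpha u,Dh)]-[\A(z,\alpha u,Dh)-\hat\A(z,\alpha u,Dh)]$, so that \emph{both} oscillation pieces are evaluated at $Dh$ and are bounded by $[\omega(\alpha|m|)+\d_{\A,\hat\A}](1+|Dh|^{p-1})$. Since $h$ solves a homogeneous equation, Theorem~\ref{thm:higherint} with $\F=0$, $f=0$ (after Lemma~\ref{gradient-est-II} bounds $\int_{Q_3}|Dh|^p$) yields $\int_{Q_{5/2}}|Dh|^{p+\e_0}\leq C(\Lambda,n,p)$ unconditionally, and the rest of your argument (including the $\sigma$-bookkeeping for $1<p<2$, the treatment of the $\F$- and $f$-terms via \eqref{f-sobolev}, and the lower-order $|m|\,|D\varphi|$ pieces) goes through as you describe. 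I note that in the paper's two applications of this lemma the problematic combination never arises (in Lemma~\ref{lm:compare-solution-1} one has $\hat\A=\A$ so $\d_{\A,\hat\A}\equiv 0$, and in Lemma~\ref{lm:compare-solution-2} one has $\F=0$, $f=0$), but as a proof of the lemma as stated your version does not close.
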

\begin{proof}
It follows from Lemma~\ref{gradient-est-II} and  assumption   \eqref{ass:uF}  that
\begin{align}\label{eq:Dv}
 \int_{Q_3}  |D  h|^p \, dz\leq 
C \int_{Q_3} \big(1+  | Du|^p +|\F|^p\big) \, dz
+C \Big(\int_{Q_3}  |f|^{\bar p'} \, dz\Big)^{\hat p}\leq C(\Lambda,p,n).
\end{align}
Therefore, we can employ Theorem~\ref{thm:higherint} for $\F=0$ and $f=0$ to conclude that there exist $\e_0>0$ small and $C>0$
depending only on $\Lambda$, $n$, and $p$ such that
\begin{equation}\label{higher-intergrability-v}
\int_{Q_\frac52} |D h|^{p+\e_0}\, dz\leq C.
\end{equation}
Let $\varphi\in C_0^\infty(Q_\frac52)$ be the standard nonnegative cut-off function which is $1$ on $Q_2$.
Let $K_s := B_\frac52 \times (-25/4, s)$. 
Then by using $\varphi^p m$ as a test function in the equations for $u$ and $h$, we have
for each $s\in (-25/4, 25/4)$ that
\begin{align*}
\int_{K_s}  m_t \varphi^p m\, dz
&=\int_{K_s}{ \langle  \hat\A(z,\alpha h,D h) - \A(z, \alpha u, D u) -|\F|^{p-2} \F, D m\rangle \varphi^p \, dz}\\
&\quad + p\int_{K_s}{ \langle \hat\A(z,\alpha h, D h) - \A(z, \alpha u, D u) -|\F|^{p-2} \F, 
D \varphi\rangle m\varphi^{p-1} \, dz} +\int_{K_s} f m \varphi^p \, dz.
\end{align*}
Since 
\[
 \int_{K_s}  m_t \varphi^p m\, dz =\int_{K_s} \Big[\big(\varphi^p \frac{m^2}{2}\big)_t -  \frac{p}{2} \varphi^{p-1}
 \varphi_t m^2\Big]\, dz
=\frac12 \int_{B_\frac52} (\varphi^p m^2)(x,s)\, dx -  \frac{p}{2}\int_{K_s}  \varphi^{p-1}\varphi_t m^2\, dz,
 \]
the above identity gives
\begin{align*}
 &\frac12 \int_{B_\frac52} (\varphi^p m^2)(x,s)\, dx +I_s
  =\int_{K_s}{ \langle \hat\A(z,\alpha h, D h)  
 -\A(z,\alpha u, D h) -|\F|^{p-2} \F, Dm\rangle
 \varphi^p \, dz}\\
  &+p\int_{K_s}{ \langle  \hat\A(z, \alpha h, D h)- \A(z, \alpha u, D u) -|\F|^{p-2} \F, D \varphi\rangle m
 \varphi^{p-1} 
  dz} +\int_{K_s} f m \varphi^p dz + \frac{p}{2}\int_{K_s}  \varphi^{p-1}\varphi_t m^2 dz,
\end{align*}
where
\[I_s :=\int_{K_s}{ \langle \A(z, \alpha u, D u) - \A(z,\alpha u,D h), Dm  \rangle \varphi^p\, dz }.
 \]

As $\hat\A(z, \alpha h, D h)  
 -\A(z,\alpha u, D h)= [\hat\A( z, \alpha h, D h) - \hat\A(z,\alpha u,D h)] 
 -[\A(z,\alpha u, D h) - \hat\A(z, \alpha u,D h)]$, we deduce   from this,  
 conditions \eqref{structural-reference-2}, 
 \eqref{structural-reference-3-reformulation},  and definition \eqref{oscillating-fn}   that
\begin{align*}
\frac12 \int_{B_\frac52} &(\varphi^p m^2)(x,s)\, dx 
+I_s
\leq \int_{K_s}{\Big\{\big[ \omega(\alpha|m|) + \d_{\A,\hat\A}\big] (1+ |D h|^{p-1})
+|\F|^{p-1}\Big\} | D m|  \varphi^p\, dz}\nonumber\\
&+C \int_{K_s}{ \big(1+ |D h|^{p-1} + |D u|^{p-1} +|\F|^{p-1}\big)  
|m| \, dz}+\int_{K_s} |f| |m \varphi^p| dz
+ C\int_{K_s}   m^2\, dz.
\end{align*}
Using H\"older's inequality and \eqref{ass:uF}--\eqref{eq:Dv}, we can bound the above third integral by 
$C\|m\|_{L^p(K_s)}$. As a consequence, we obtain 
\begin{align}\label{mIs}
\frac12 \int_{B_\frac52} (m^2\varphi^p)(x,s)\, dx +I_s
&\leq \int_{K_s}{\left\{\big[\omega(\alpha |m|)  + \d_{\A,\hat\A}\big] (1+ |D h|^{p-1}) +|\F|^{p-1}\right\} 
|D m|\,  \varphi^p \, dz}\nonumber\\
&\quad +\int_{K_s} |f| |m \varphi^p| dz
+C\Big(  \|m\|_{L^p(K_s)} +\|m\|_{L^2(K_s)}^2\Big),
\end{align}
where $C>0$ depends only on $\Lambda$, $p$, and $n$. We next estimate the two integrals on the
right hand side of \eqref{mIs}. 
Using  Young's inequality, \eqref{higher-intergrability-v}, and the boundedness of
$\d_{\A,\hat\A}$ and $\omega$, it follows  for  any $\sigma>0$  that
\begin{align*}
&\int_{K_s}{\left\{\big[\omega(\alpha |m|)  + \d_{\A,\hat\A}\big] (1+ |D h|^{p-1}) +|\F|^{p-1}\right\} 
|D m|\,  \varphi^p \, dz}-\sigma \int_{K_s}{|D m|^p \varphi^p \, dz}\\
&\leq  \frac{C_p}{\sigma^{\frac{1}{p-1}}}
\left\{\int_{K_s}{\big[
  \omega
(\alpha | m|)+ \d_{\A,\hat\A}  \big]^{\frac{p}{p-1}} (1+ |Dh|^p) \, dz} +\int_{K_s} 
|\F|^p\, dz\right\}\nonumber\\
&\leq   \frac{C_p}{\sigma^{\frac{1}{p-1}}}\Bigg\{\Big(\int_{Q_\frac52} 
(1+|D h|^p)^{\frac{p+\e_0}{p}} 
\Big)^{\frac{p}{p+\e_0}}  \Big(\int_{K_s}{\big[
  \omega
(\alpha | m|)+ \d_{\A,\hat\A} \big]^{\frac{p(p+\e_0)}{(p-1)\e_0}}  \, dz}\Big)^{\frac{\e_0}{p+\e_0}} 
+\|\F\|_{L^p(K_s)}^p\Bigg\}
\\
&\leq   \frac{C(\Lambda,p,n)}{ \sigma^{\frac{1}{p-1}}}  
\left\{ \Big(\int_{K_s}{\big[
 \omega
(\alpha | m|) + \d_{\A,\hat\A}  \big]  \, dz}\Big)^{\frac{\e_0}{p+\e_0}} +\|\F\|_{L^p(K_s)}^p\right\}.
\end{align*}
On the other hand, as in \eqref{f-sobolev} and by the properties of $\varphi$ we have
\begin{align*}
 \int_{K_s}   |f|  | m\varphi^p|\, dz
&\leq \sigma \int_{K_s} [ |D  m|^p \varphi^p + |m|^p ]\, dz +\sigma \sup_{t\in (-\frac{25}{4},s)}\int_{B_\frac52} 
(m^2 \varphi^{p})(x,t)\, dx
+ \frac{C(n,p)}{\sigma^{\frac{p+n}{p(n+1)-n}}} \|f\|_{L^{\bar p'}(K_s)}^{\frac{p(n+2)}{p(n+1)-n}}
\end{align*}
for all $ \sigma>0$. 
Therefore, we deduce from  \eqref{mIs}  that
\begin{align}\label{homo-est}
\frac12 \int_{B_\frac52} (m^2\varphi^p)(x,s)\, dx 
&+I_s
\leq \sigma \int_{K_s}{|D m|^p \varphi^p \, dz}
+\frac{\sigma}{2} \sup_{t\in (-\frac{25}{4},s)}\int_{B_\frac52} 
(m^2 \varphi^{p})(x,t)\, dx
\nonumber\\
&+ C \sigma^{\frac{-1}{p-1}} 
\left\{ \Big(\int_{K_s}{\big[
 \omega
(\alpha | m|) + \d_{\A,\hat\A}  \big]  \, dz}\Big)^{\frac{\e_0}{p+\e_0}} +\|\F\|_{L^p(K_s)}^p\right\}
+ C\sigma^{\frac{-(p+n)}{p(n+1)-n}} \|f\|_{L^{\bar p'}(K_s)}^{\frac{p(n+2)}{p(n+1)-n}}\\
&\quad 
+C\Big( \sigma \|m\|_{L^p(K_s)}^p +  \|m\|_{L^p(K_s)} +\|m\|_{L^2(K_s)}^2\Big)\qquad\qquad \forall \sigma>0.\nonumber
\end{align}
Now if $p\geq 2$, then  \eqref{structural-reference-1}  implies that
$\Lambda^{-1}\int_{K_s}{ |D  m|^p \varphi^p dz}\leq I_s$. Hence by combining with
\eqref{homo-est} and choosing $\sigma>0$ sufficiently small, we obtain
\begin{align*}
&\frac12 \int_{B_\frac52} (m^2\varphi^p)(x,s)\, dx 
+\frac{1}{2\Lambda}\int_{K_s}{ |D m|^p\varphi^p dz }
\leq \frac14 \sup_{t\in (-\frac{25}{4},\frac{25}{4})}\int_{B_\frac52} 
(m^2 \varphi^{p})(x,t)\, dx\\
&+ C \Bigg\{ \Big(\int_{Q_\frac52}{\big[
 \omega
(\alpha | m|) + \d_{\A,\hat\A}  \big]  \, dz}\Big)^{\frac{\e_0}{p+\e_0}} 
+\|\F\|_{L^p(Q_\frac52)}^p+\|f\|_{L^{\bar p'}(Q_\frac52)}^{\frac{p(n+2)}{p(n+1)-n}} +\|m\|_{L^p(Q_\frac52)}^p 
+ \|m\|_{L^p(Q_\frac52)}+\|m\|_{L^2(Q_\frac52)}^2\Bigg\}
\end{align*}
for every $s\in (-\frac{25}{4},\frac{25}{4})$. This implies 
  (i) since $\varphi =1$ on $Q_2$.
On the other hand,  if $1<p<2$ then  Lemma~\ref{simple-est} together with  \eqref{ass:uF}  yields
\[
c \tau^{\frac{2-p}{p}} \int_{Q_\frac52} |D  m|^p \varphi^p dz 
- C\, \tau^{\frac{2}{p}}\leq 
c \tau^{\frac{2-p}{p}} \int_{Q_\frac52} |D  m|^p \varphi^p dz 
- 2c \, \tau^{\frac{2}{p}} \int_{Q_\frac52} (1+|D u|^p)\varphi^p  dz
 \leq I_s
\]
for all $\tau>0$ small. By combining this with \eqref{homo-est} and taking $c \tau^{\frac{2-p}{p}}=2\sigma$, 
 we deduce for  $\sigma>0$ small that 
\begin{align}\label{mI-casep<2}
\frac12 \int_{B_\frac52} (m^2\varphi^p)(x,s)\, dx 
&+\sigma \int_{K_s}{|D m|^p \varphi^p \, dz}
\leq   C\sigma^{\frac{2}{2-p}}   +\frac{1}{4} \sup_{t\in (-\frac{25}{4},\frac{25}{4})}\int_{B_\frac52} 
(m^2 \varphi^{p})(x,t)\, dx\nonumber\\
&+ C \sigma^{\frac{-1}{p-1}}   
\Bigg\{ \Big(\int_{Q_\frac52}{\big[
 \omega
(\alpha | m|) + \d_{\A,\hat\A}  \big]  dz}\Big)^{\frac{\e_0}{p+\e_0}} +\|\F\|_{L^p(Q_{\frac52})}^p\Bigg\}
+ C \sigma^{\frac{-(p+n)}{p(n+1)-n}} \|f\|_{L^{\bar p'}(Q_\frac52)}^{\frac{p(n+2)}{p(n+1)-n}}\\
&+C\Big( \|m\|_{L^p(Q_\frac52)}^p +  \|m\|_{L^p(Q_\frac52)} +\|m\|_{L^2(Q_\frac52)}^2\Big)\nonumber
\end{align}
for every $s\in (-\frac{25}{4},\frac{25}{4})$. In particular, we get
\begin{align*}
\frac{1}{4} \sup_{t\in (-\frac{25}{4},\frac{25}{4})}\int_{B_\frac52} 
(m^2 \varphi^{p})(x,t)\, dx
&\leq   C\sigma^{\frac{2}{2-p}} + C \sigma^{\frac{-1}{p-1}} 
\Bigg\{ \Big(\int_{Q_\frac52}{\big[
 \omega
(\alpha | m|) + \d_{\A,\hat\A}  \big]  \, dz}\Big)^{\frac{\e_0}{p+\e_0}} +\|\F\|_{L^p(Q_{\frac52})}^p\Bigg\}\\
&+C \sigma^{\frac{-(p+n)}{p(n+1)-n}} \|f\|_{L^{\bar p'}(Q_\frac52)}^{\frac{p(n+2)}{p(n+1)-n}}
+C\Big(\|m\|_{L^p(Q_\frac52)}^p +  \|m\|_{L^p(Q_\frac52)} +\|m\|_{L^2(Q_\frac52)}^2\Big).
\end{align*}
 This together with \eqref{mI-casep<2}  gives (ii) as desired.
\end{proof}

\subsection{A compactness argument}\label{sub:compactness}
In order to  verify the local Lipschitz approximation property, we compare gradients of solutions of our equation to those of the corresponding frozen equation. To this end,   we employ a compactness
argument in two steps. In the first step, we reduce the problem to the homogeneous case 
(Lemma~\ref{lm:compare-solution-1}). We then handle the  homogeneous equation in the second step 
(Lemma~\ref{lm:compare-solution-2}) by making use of the higher integrability stated in Theorem~\ref{thm:higherint}. It is crucial  that the constants $\delta$ in these two lemmas can be chosen to be independent of the parameter $\alpha$.
\begin{lemma}[reduction to homogeneous equations]\label{lm:compare-solution-1} Assume that $p> 2n/(n+2)$ and $M_0\in (0,\infty)$. Let $\A$ satisfy 
\eqref{structural-reference-1}--\eqref{structural-reference-3}, and $\A(\cdot,\cdot,0)=0$.
For any $\e>0$, there exists $\delta_1>0$ depending only on $\e$, $\Lambda$, $p$,    $n$, $\K$, and $M_0$  such that:  
if $\alpha>0$,
$\,\fint_{Q_4} |\F|^p\, dz +\Big(\fint_{Q_4}  | f|^{\bar p'} \, dz\Big)^{\hat p} \leq \delta_1^p$,
and $u$ is a weak solution of 
\begin{equation*}\label{eq-for-u}
u_t =\div \A(z, \alpha u, D u) + \div(|\F|^{p-2}\F) +f \quad \text{in}\quad Q_4
\end{equation*}
 satisfying
\begin{equation*}
\|u\|_{L^\infty(Q_4)}\leq \frac{M_0}{\alpha} \quad\mbox{ and }\quad 
\fint_{Q_4}{ |D u|^p\, dz} \leq 1,
\end{equation*}
and $w$  is a weak solution of
\begin{equation*}\label{eq-for-w}
\left \{
\begin{array}{lcll}
w_t &=&\div \A( z,\alpha w,D w)  \quad &\text{in}\quad Q_\frac72, \\
w & =& u\quad &\text{on}\quad \partial_p Q_\frac72,
\end{array}\right.
\end{equation*}
 then 
\begin{equation}\label{u-w-close}
\int_{Q_3}{|Du - D w|^p\, dz}\leq \e^p.
\end{equation}
\end{lemma}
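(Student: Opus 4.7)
The plan is a compactness-by-contradiction argument. Suppose the lemma fails for some fixed $\e > 0$: then there exist sequences $\alpha_k > 0$, $\F_k$, $f_k$ satisfying $\fint_{Q_4}|\F_k|^p + \bigl(\fint_{Q_4}|f_k|^{\bar p'}\bigr)^{\hat p} \to 0$, and corresponding weak solutions $u_k$, $w_k$ to the full and homogeneous problems respectively, with all stated hypotheses, but $\int_{Q_3}|Du_k - Dw_k|^p > \e^p$. Setting $m_k := u_k - w_k$, I would first apply Lemma~\ref{lm:compare-gradient-part1} with $\hat\A = \A$ (after a harmless rescaling that moves the comparison from $Q_2$ onto $Q_3$), so that $\d_{\A,\hat\A}\equiv 0$; the resulting estimate reduces the task to proving
\[
\|m_k\|_{L^p} + \|m_k\|_{L^2} + \int \omega(\alpha_k |m_k|)\,dz \longrightarrow 0,
\]
since the contributions of $\F_k,f_k$ are already governed by $\delta_k^p\to 0$. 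A standard truncation/energy argument applied to the homogeneous equation for $w_k$ gives $\|w_k\|_\infty \leq \|u_k\|_\infty \leq M_0/\alpha_k$, hence pointwise $\alpha_k|m_k|\leq 2M_0$ and $\omega(\alpha_k|m_k|)\leq 2\Lambda$; the real challenge is to upgrade these $L^\infty$-bounds to $L^1$-smallness.

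The second step is strong compactness. Lemmas~\ref{energy-estimate} and~\ref{gradient-est-II} give uniform bounds on $\{u_k\},\{w_k\}$ in $L^\infty_t L^2_x \cap L^p_t W^{1,p}_x$, while the PDEs yield uniform control of $\partial_t u_k,\partial_t w_k$ in a suitable negative Sobolev space. Aubin--Lions then extracts a subsequence along which $u_k\to u_\infty$, $w_k\to w_\infty$ strongly in $L^p$ and almost everywhere, together with $\alpha_k\to\alpha_\infty\in[0,\infty]$. When $\alpha_\infty<\infty$, I would pass to the limit in the weak formulations; the strong $L^p$-convergence of $\alpha_k u_k,\alpha_k w_k$ combined with the $u$-Lipschitz bound \eqref{structural-reference-3-reformulation} and the boundedness of $\omega$ lets one identify the limit equations, and a Minty-free Gr\"onwall argument on $\|u_\infty - w_\infty\|_{L^2}^2(t)$ — testing the difference equation against $u_\infty - w_\infty$ and exploiting the monotonicity of $\A$ in the gradient — delivers uniqueness for the limit homogeneous problem and forces $u_\infty = w_\infty$. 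Consequently $m_k\to 0$ in $L^p$ and a.e., so $\alpha_k m_k\to 0$ a.e., and dominated convergence with envelope $|\alpha_k m_k|\leq 2M_0$ yields $\int \omega(\alpha_k m_k)\,dz\to 0$, a contradiction.

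The principal obstacle is the remaining case $\alpha_\infty = \infty$. Then $\|u_k\|_\infty,\|w_k\|_\infty \leq M_0/\alpha_k \to 0$ and $u_\infty = w_\infty = 0$, but $\alpha_k m_k$ is only known to be bounded by $2M_0$ and not small, so dominated convergence on $\omega(\alpha_k m_k)$ requires the non-obvious fact that $\alpha_k m_k \to 0$ in measure. This is precisely where the two-parameter Gr\"onwall technique of Section~\ref{sec:AppGra}, carried out so that every constant is independent of $\alpha_k$, becomes essential. I would work with the rescaled variables $\Phi_k := \alpha_k u_k$ and $\Psi_k := \alpha_k w_k$, uniformly $L^\infty$-bounded by $M_0$, and derive from the subtracted evolution equation an $\alpha_k$-uniform differential inequality for $\|\Phi_k - \Psi_k\|_{L^2(B)}^2(t)$. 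The crucial manoeuvre is to absorb the error term coming from the $u$-dependence of $\A$ into the coercive part using the uniform bound $\omega\leq 2\Lambda$ together with the monotonicity structure, rather than the linear bound $\omega(r)\leq \Lambda r$ which would produce a fatal $\alpha_k$-factor. Since $\Phi_k - \Psi_k$ vanishes on the initial time slice of $Q_{7/2}$, Gr\"onwall then forces $\|\Phi_k - \Psi_k\|_{L^\infty_t L^2_x} \to 0$, i.e.\ $\alpha_k m_k \to 0$ in $L^2$ and hence in measure, closing the contradiction as in the bounded-$\alpha_k$ case.
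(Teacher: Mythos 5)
Your overall architecture --- contradiction, extraction of a subsequence $\alpha_k\to\alpha_\infty\in[0,\infty]$, strong $L^p$/$L^2$ convergence of $m_k=u_k-w_k$ to zero, and then closing via Lemma~\ref{lm:compare-gradient-part1} with $\d_{\A,\hat\A}\equiv 0$ --- is the same as the paper's. But the two steps you flag as the technical core are handled differently in the paper, and as you describe them both contain gaps. First, the case $\alpha_\infty=\infty$ is not the principal obstacle; it is the trivial case. When $\alpha_k\to\infty$ you have $\|u^k\|_{L^\infty},\|w^k\|_{L^\infty}\le M_0/\alpha_k\to 0$, and the energy estimate of Lemma~\ref{energy-estimate} (applied to $u^k$ and to $w^k$ separately) bounds $\int_{Q_3}|Du^k|^p$ and $\int_{Q_3}|Dw^k|^p$ by quantities built from $\|u^k\|_{L^1},\|u^k\|_{L^2}^2,\|u^k\|_{L^p}^p,\|\F_k\|_{L^p}^p$ and $\|f_k\|_{L^{\bar p'}}$, all of which vanish in the limit; hence $\liminf_k\int_{Q_3}|Du^k-Dw^k|^p=0$ directly, contradicting the standing assumption. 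Your alternative --- a Gr\"onwall inequality for $\|\alpha_k m_k\|_{L^2}^2(t)$ obtained by testing the difference equation against $\alpha_k m_k$ --- does not close: the coercive and error terms in that identity carry a factor $\alpha_k^2$ relative to the time-derivative term, and the error $\int\omega(\alpha_k|m_k|)(1+|Dw_k|^{p-1})|Dm_k|$, estimated with $\omega\le 2\Lambda$, is merely bounded (the weight $1+|Dw_k|^p$ is only in $L^1$), so after Young's inequality you are left with a term of size $O(\alpha_k^2)$ that no Gr\"onwall kernel absorbs.

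Second, in the bounded-$\alpha_k$ case your plan to ``pass to the limit in the weak formulations'' and ``identify the limit equations,'' then invoke uniqueness for the limit problem, runs into exactly the Minty--Browder obstruction the paper is built to avoid: $Du^k$ converges only weakly in $L^p$, so the weak limit of $\A^k(z,\alpha_k u^k,Du^k)$ cannot be identified as $\A(z,\alpha u_\infty,Du_\infty)$ from the information you have, and strong convergence of $\alpha_k u_k$ only resolves the $u$-slot, not the gradient slot. The paper never identifies a limit equation. Instead it subtracts the two $k$-th equations, tests with the truncated sign function $h_\e(m^k)$, discards the monotone (nonnegative) term via \eqref{structural-reference-1} (or Lemma~\ref{simple-est} when $p<2$), and bounds what remains by $C\big(\alpha_k^{p/(p-1)}\e^{1/(p-1)}+\e^{-1}\|\F_k\|_{L^p}^p+\|f_k\|_{L^1}\big)$ using the uniform gradient bounds on $w^k$; sending $k\to\infty$ \emph{first} and $\e\to 0^+$ \emph{second} yields $\int_{Q_{7/2}}m_+=0$, and the symmetric argument gives $m=0$. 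If you replace your limit-identification step by this order-of-limits argument at the level of the approximating sequence, and replace your treatment of $\alpha_\infty=\infty$ by the direct energy-estimate argument, your proof matches the paper's.
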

\begin{proof}
We  prove \eqref{u-w-close} by contradiction. Suppose that estimate \eqref{u-w-close} is not true. Then there exist
$\e_0,\, p, \, \Lambda,   \, n,\, \K, \, M_0$, 
a sequence of  positive numbers  $\{\alpha_k\}_{k=1}^\infty$, a sequence 
 $\{\A^k\}_{k=1}^\infty$  satisfying  structural 
 conditions \eqref{structural-reference-1}--\eqref{structural-reference-3} and $\A^k(\cdot,\cdot,0)=0$, 
 and   sequences of functions  $\{\F_k\}_{k=1}^\infty$,  $\{f_k\}_{k=1}^\infty$,  $\{u^k\}_{k=1}^\infty$ 
such that
\begin{equation}\label{F_k-condition}
\fint_{Q_4} |\F_k|^p\, dz +\Big(\fint_{Q_4}  | f_k |^{\bar p'} \, dz\Big)^{\hat p} \leq \frac{1}{k^p}, 
\end{equation}
 $u^k$ is a weak solution of 
\begin{equation*}
u^k_t =\div  \A^k(z,\alpha_k u^k,D u^k) +\div(|\F_k|^{p-2}\F_k) +f_k \quad \text{in}\quad Q_4
\end{equation*}
with 
\begin{equation}\label{gradient-bounded-ass-1}
 \|u^k\|_{L^\infty(Q_4)}\leq \frac{M_0}{\alpha_k} \quad\mbox{ and }\quad 
\fint_{Q_4}{ |D  u^k|^p\, dz} \leq 1,
\end{equation}
\begin{equation}\label{contradiction-conclusion-1}
\int_{Q_3} |D u^k - Dw^k|^p \, dz > \e_0^p  \quad\mbox{for all } k.
\end{equation}
Here  $w^k$  is a weak solution  of
\begin{equation*}
\left \{
\begin{array}{lcll}
w^k_t &=& \div  \A^k(z,\alpha_k w^k,D w^k)  \quad &\text{in}\quad Q_\frac72, \\
w^k & =& u^k\quad &\text{on}\quad \partial_p Q_\frac72.
\end{array}\right.
\end{equation*}
Using Proposition~\ref{pro:compa},
Lemma~\ref{gradient-est-II}, and \eqref{F_k-condition}--\eqref{gradient-bounded-ass-1}, we obtain
\begin{equation}\label{bounds-for-wk}
 \|w^k\|_{L^\infty(Q_\frac72)}\leq \frac{M_0}{\alpha_k} \quad\mbox{ and }\quad 
\fint_{Q_\frac72}{ |D  w^k|^p\, dz} \leq C(\Lambda,p, n).
\end{equation}
If the sequence $\{\alpha_k\}$ has a subsequence converging to $+\infty$, then 
we infer from the fact 
$\|D u^k-D w^k\|_{L^p(Q_3)}
\leq\|D u^k\|_{L^p(Q_3)} +\| D w^k\|_{L^p(Q_3)}$,  Lemma~\ref{energy-estimate},  estimates 
\eqref{F_k-condition}--\eqref{gradient-bounded-ass-1} 
 and \eqref{bounds-for-wk} that
\begin{align*}
\liminf_{k\to \infty}\int_{Q_3} |D u^k-D w^k|^p\, dz=0
\end{align*}
which contradicts \eqref{contradiction-conclusion-1}. 
Thus, we conclude that 
 $\{\alpha_k\}$ is  bounded and hence there exist a subsequence (still labeled  $\{\alpha_k\}$) and  a constant  $\alpha\in [0,\infty)$   such that
 $\alpha_k \to \alpha$. Since $\alpha$ could be zero,  the sequences $\{u^k\}$ and $\{w^k\}$ might be  unbounded in $L^p(Q_\frac72)$.
In spite of that, we claim that: up to a subsequence, there holds
\begin{equation}
\label{claim:u^kw^k->0}
 \lim_{k\to\infty }\Big[\|u^k - w^k\|_{L^p(Q_\frac72)}
 +\|u^k - w^k\|_{L^2(Q_\frac72)}\Big]=0.
\end{equation}
In order to prove \eqref{claim:u^kw^k->0}, we first  note that
by applying  Lemma~\ref{gradient-est-II}  for $u \rightsquigarrow  u^k$, $\F \rightsquigarrow  \F_k$, 
$f \rightsquigarrow  f_k$,
$v 
\rightsquigarrow  w^k$, 
and using \eqref{F_k-condition}--\eqref{gradient-bounded-ass-1}, we get 
\begin{equation*}
 \sup_{t\in (\frac{-49}{4},\frac{49}{4})}\int_{B_\frac72} |u^k - w^k|^2\, dx
+ \int_{Q_\frac72}  |D u^k - D w^k|^p \, dz\leq C. 
\end{equation*}
This  together with  the parabolic embedding  (see \cite[Proposition~3.1, page~7]{D2})   gives 
\begin{align}\label{embedding-1}
\int_{Q_\frac72} |u^k - w^k|^{p\frac{n+2}{n}}\, dz \leq 
C(n,p)\Big(\int_{Q_\frac72}  |D u^k - D w^k|^p \, dz\Big)
\Big(\sup_{t}\int_{B_\frac72} |u^k - w^k|^2\, dx \Big)^{\frac{p}{n}}\leq C.
\end{align}
In particular, $ \|u^k - w^k\|_{L^p(Q_\frac72)} 
\leq  C$.
Thus  there exist  subsequences, still denoted by $\{u^k\}$ and $\{w^k\}$, and   a function $m(z)$ such that $u^k-w^k\to m$ strongly in $L^p(Q_\frac72)$ and $D (u^k -  w^k) \rightharpoonup Dm$ 
weakly in $L^p(Q_\frac72)$. We next show that $m(z)=0$ for a.e. $z\in Q_\frac72$.

Let $m^k := u^k - w^k$. By taking a subsequence if necessary, we can assume that
$m^k(z)\to m(z)$ for a.e. $z\in Q_\frac72$.
For $\e>0$ small, we define the following continuous approximation to the $\sgn^+$  function:
\begin{equation}\label{def:h_e}
h_\e(s) := \left\{\begin{array}{rl}
\!\!& \ 1  \quad \mbox{ for }\quad s\geq \e, \\
\!\!& \ \frac{s}{\e}  \quad \mbox{ for }\quad 0\leq s< \e \\ 
\!\!& \ 0   \quad \mbox{ for }\quad s< 0. 
\end{array}\right.
\end{equation}
 By using $h_\e(m^k)$ as a test function in the equations for $u^k$ and $w^k$ and subtracting the resulting expressions, we obtain:
\begin{align*}
&\int_{B_\frac72}\Big(\int_0^{
m^k_+(x,t)}  h_\e(s)\, ds \Big)dx 
+\int_0^t\int_{B_\frac72}\langle \A^k(z,\alpha_k u^k,D u^k)-\A^k(z,\alpha_k u^k,D w^k) , Dm^k\rangle h_\e'(m^k)\, dz\nonumber\\
&= \int_0^t\int_{B_\frac72} f_k h_\e(m^k)  dz+  \int_0^t\int_{B_\frac72} \langle 
\A^k(z,\alpha_k w^k,D w^k)-\A^k(z,\alpha_k u^k,D w^k) -|\F_k|^{p-2}\F_k, Dm^k\rangle h_\e'(m^k)
dz
\end{align*}
for all $t\in (-49/4,49/4)$, where $m^k_+(x,t) := \max{\{m^k(x,t), 0\}}$. Hence, it follows from \eqref{structural-reference-3}  that
\begin{align}\label{eq:uw-integral-est}
\int_{B_\frac72}\Big(\int_0^{
m^k_+(x,t)}  h_\e(s)\, ds \Big)dx 
&+\int_0^t\int_{B_\frac72}\langle \A^k(z,\alpha_k u^k,D u^k)-\A^k(z,\alpha_k u^k,D w^k) , Dm^k\rangle h_\e'(m^k)\, dz\nonumber\\
&\leq \int_{Q_\frac72} |f_k|\, dz
+ \int_0^t\int_{B_\frac72}\Big[ 
\Lambda \alpha_k m^k(1+|Dw^k|^{p-1})+ |\F_k|^{p-1}\Big] |Dm^k|h_\e'(m^k) \, dz.
\end{align}
Let us consider the following two cases.

{\bf Case 1: $\mathbf{ p\geq 2}$.} Then by applying  structural condition \eqref{structural-reference-1} to the second integral in \eqref{eq:uw-integral-est} and Young's inequality to the last integral, we obtain after canceling like terms that
\[\int_{B_\frac72}\Big(\int_0^{m^k_+(x,t)}  h_\e(s) ds \Big)dx 
\leq \int_{Q_\frac72} |f_k| dz +  C(\Lambda, p)\int_0^t\int_{B_\frac72}\Big [ 
\alpha_k^{\frac{p}{p-1}} (m^k)^{\frac{p}{p-1}} (1+|Dw^k|^p)+ |\F_k|^p\Big] h_\e'(m^k) dz.
 \]
Thanks to the definition of  $h_\e$ in \eqref{def:h_e} and owing to \eqref{bounds-for-wk}, we infer that
\begin{align*}
\int_{B_\frac72}\Big(\int_0^{m^k_+(x,t)}  h_\e(s)\, ds \Big)dx 
&\leq  \int_{Q_\frac72} |f_k|\, dz +  C(\Lambda, p,n)\Bigg[\alpha_k^{\frac{p}{p-1}} \e^{\frac{1}{p-1}} 
+\frac{1}{\e } \int_{Q_\frac72} |\F_k|^p\, dz\Bigg].
\end{align*}
Hence, by letting $k\to\infty$ and using \eqref{F_k-condition} we  obtain 
\[
 \int_{B_\frac72}\Big(\int_0^{m_+(x,t)}  h_\e(s)\, ds \Big)dx\leq  C(\Lambda, p,n)\alpha^{\frac{p}{p-1}} \e^{\frac{1}{p-1}} .
\]
We next let $\e \to 0^+$ to get
$\int_{B_\frac72} m_+(x,t)\,dx \leq 0$ for every $t\in (-49/4,49/4)$.
We then conclude that 
\[
\int_{Q_\frac72} m_+(x,t)\,dx dt=0,
\]
and hence $m(z)\leq 0$ for a.e. $z\in Q_\frac72$.

{\bf Case 2: $\mathbf{\frac{2n}{n+2}< p< 2}$.} Then by applying Lemma~\ref{simple-est} to the second integral in \eqref{eq:uw-integral-est} and Young's inequality to the last integral, we get for any $\tau\in (0,1/2)$ and any $\sigma>0$ that
\begin{align*}
&\int_{B_\frac72}\Big(\int_0^{m^k_+(x,t)}  h_\e(s)\, ds \Big)dx 
+c \tau^{\frac{2-p}{p}}\int_0^t\int_{B_\frac72} |Dm^k|^p h_\e'(m^k) \, dz
-2c \tau^{\frac2p}\int_0^t\int_{B_\frac72} (1+|Du^k|^p) h_\e'(m^k) \, dz\\
&\leq  \sigma \int_0^t\int_{B_\frac72} |Dm^k|^p h_\e'(m^k) \, dz
+\int_{Q_\frac72} |f_k|\, dz\\
&\quad + C(\Lambda,p) \sigma^{\frac{-1}{p-1}} \int_0^t\int_{B_\frac72}\Big [ 
\alpha_k^{\frac{p}{p-1}} (m^k)^{\frac{p}{p-1}} (1+|Dw^k|^p)+ |\F_k|^p\Big] h_\e'(m^k) \, dz.
 \end{align*}
We now take  $\tau>0$  such that 
$c \tau^{\frac{2-p}{p}}=\sigma$, use \eqref{gradient-bounded-ass-1} to bound the above third integral and
use  \eqref{bounds-for-wk} to bound the last integral.  As a consequence, we obtain
\begin{align*}
\int_{B_\frac72}\Big(\int_0^{m^k_+(x,t)}  h_\e(s)\, ds \Big)dx 
&\leq  C(\Lambda,p)
\frac{1}{\e} \sigma^{\frac{2}{2-p}}+\int_{Q_\frac72} |f_k|\, dz\\
&+ 
C(\Lambda,p,n)\sigma^{\frac{-1}{p-1}} \Big[\alpha_k^{\frac{p}{p-1}} \e^{\frac{1}{p-1}}  +\frac{1}{\e}
\int_{Q_\frac72} |\F_k|^p\, dz\Big]\quad\forall \sigma>0 \quad\mbox{small}.
 \end{align*}
Therefore, letting $k\to\infty$ and using \eqref{F_k-condition} yield
\begin{align*}
\int_{B_\frac72}\Big(
\int_0^{m_+(x,t)}  h_\e(s)\, ds \Big)dx 
\leq  C\Big[
\frac{1}{\e} \sigma^{\frac{2}{2-p}}+ \sigma^{\frac{-1}{p-1}} \alpha^{\frac{p}{p-1}} \e^{\frac{1}{p-1}}\Big]\quad\forall \sigma>0 \quad\mbox{small}.
 \end{align*}
Let us minimize the right hand side by choosing $\sigma=\e^{2-p}$ to obtain
\begin{align*}
\int_{B_\frac72}\Big(
\int_0^{m_+(x,t)}  h_\e(s)\, ds \Big)dx 
\leq  C\big(
1 +  \alpha^{\frac{p}{p-1}} \big)\, \e\quad\forall \e>0 \quad\mbox{small}.
 \end{align*}
 We next let $\e \to 0^+$ to get as in {\bf Case 1} that
$\int_{Q_\frac72} m_+(x,t)\,dx dt=0$,
implying $m(z)\leq 0$  a.e. in $Q_\frac72$.

Thus we have shown in both cases that $m(z)\leq 0$  a.e. in $ Q_\frac72$. By interchanging the role of $u^k$ and $w^k$, we also have $m(z)\geq 0$  a.e. in
$ Q_\frac72$. Thus, $m= 0$ in $Q_\frac72$ and so $u^k-w^k\to 0$ in $L^p(Q_\frac72)$. This implies  claim  \eqref{claim:u^kw^k->0} in the case $p\geq 2$. For the case $2n/(n+2)<p<2$, by taking 
$\theta := \frac{n+2}{n} -\frac{n}{p}$ we have 
$\theta\in (0,1)$ and 
\[2= \theta p + (1-\theta) p\frac{n+2}{n}.
 \]
 Therefore, by interpolation and using  estimate \eqref{embedding-1} we obtain
\begin{align*}
\int_{Q_\frac72} |u^k - w^k|^2 \, dz \leq \Big(\int_{Q_\frac72} |u^k - w^k|^p \, dz\Big)^\theta \Big(\int_{Q_\frac72} |u^k - w^k|^{p\frac{n+2}{n}}\, dz\Big)^{1-\theta}\leq C \Big(\int_{Q_\frac72} |u^k - w^k|^p \, dz\Big)^\theta.
\end{align*}
Thus $\|u^k - w^k\|_{L^2(Q_\frac72)}\to 0$ and claim \eqref{claim:u^kw^k->0} follows in this case as well.

 We now use \eqref{claim:u^kw^k->0} to derive a contradiction. 
By applying   Lemma~\ref{lm:compare-gradient-part1} for $\alpha \rightsquigarrow  \alpha_k$, $u \rightsquigarrow  
u^k$, $h \rightsquigarrow  w^k$, $\F \rightsquigarrow  \F_k$,  $f \rightsquigarrow  f_k$ and using \eqref{F_k-condition} together with the 
facts $\omega(r)\leq \Lambda r$ and   $\{\alpha_k\}$ is bounded, we obtain
\begin{align*}
\int_{Q_3}{ |D m^k|^p dz }
&\leq C\Big(\|m^k\|_{L^p(Q_{\frac72})}^p +
\|m^k\|_{L^p(Q_{\frac72})} + \|m^k\|_{L^2(Q_{\frac72})}^2\Big)\\
&\quad +C \Bigg[ \| m^k\|_{L^1(Q_\frac72)}^{\frac{\e_0}{p+\e_0}} 
+\|\F_k\|_{L^p(Q_{\frac72})}^p+\Big(\int_{Q_\frac72}  | f_k|^{\bar p'} \, dz\Big)^{\hat p}\Bigg]
 \end{align*}
for the case $p\geq 2$. Letting $k\to \infty$ and making use of \eqref{F_k-condition} and \eqref{claim:u^kw^k->0}, we conclude that \begin{equation}\label{Du-Dw->0}
\lim_{k\to\infty}\int_{Q_3}{ |D u^k-D w^k|^p dz }= \lim_{k\to\infty}\int_{Q_3}{ |D m^k|^p dz }=0.
\end{equation}
On the other hand, for the case $p<2$ we get from Lemma~\ref{lm:compare-gradient-part1} that
\begin{align*}
\int_{Q_3} |Dm^k|^p  dz
&\leq  C \sigma^{\frac{p}{2-p}} +C\sigma^{-1} \Big(\|m^k\|_{L^p(Q_{\frac72})}^p + \|m^k\|_{L^p(Q_{\frac72})}
+\|m^k\|_{L^2(Q_{\frac72})}^2\Big)\\
&\quad + C \sigma^{\frac{-p}{p-1}} \Big( \| m^k\|_{L^1(Q_\frac72)}^{\frac{\e_0}{p+\e_0}} +
\|\F_k\|_{L^p(Q_{\frac72})}^p\Big)
+C \sigma^{\frac{-p(n+2)}{p(n+1)-n}}\Big(\int_{Q_\frac72}  |f_k|^{\bar p'} \, dz\Big)^{\hat p}
\end{align*}
for all $\sigma>0$ small. By first taking $k\to\infty$ and then taking $\sigma\to 0^+$, we still arrive at \eqref{Du-Dw->0}.  

As \eqref{Du-Dw->0} contradicts  \eqref{contradiction-conclusion-1}, we have produced a contradiction and the lemma is proved.
\end{proof}

In the next result, we  only deal with  the case $\F=0$ and $f=0$, that is, weak solutions of homogeneous equations. 
The spatial gradients of these 
weak solutions enjoy the self improving property (Theorem~\ref{thm:higherint}) which plays an important role in our proof.
\begin{lemma}[gradient approximation for homogeneous equations]\label{lm:compare-solution-2} Assume that $p> 2n/(n+2)$ and $M_0\in (0,\infty)$. Let $\A$ satisfy 
\eqref{structural-reference-1}--\eqref{structural-reference-3}, and $\A(\cdot,\cdot,0)=0$.
For any $\e>0$, there exists $\delta_2>0$ depending only on $\e$, $\Lambda$, $p$,    $n$, $\K$, and $M_0$  such that:  if $\alpha>0$,
\begin{equation*}
\fint_{Q_4} \Big[
\sup_{u\in \overline\K}\sup_{ \xi\in\R^n}\frac{| \A(x,t, u,\xi) -  \A_{B_4}(t,u,\xi)|}{1+|\xi|^{p-1}}
\Big] \, dx dt  \leq \delta_2^p,
\end{equation*}
and $w$ is a weak solution of 
$w_t =\div \A(z, \alpha w, D w)$ in  $ Q_\frac72$
 satisfying
\begin{equation*}
\|w\|_{L^\infty(Q_\frac72)}\leq \frac{M_0}{\alpha} \quad\mbox{ and }\quad 
\fint_{Q_\frac72}{ |D w|^p\, dz} \leq C(\Lambda, p, n),
\end{equation*}
and $v$  is a weak solution of
\begin{equation*}\label{eq-for-v}
\left \{
\begin{array}{lcll}
v_t &=&\div \A_{B_4}(t, \alpha v,D v)  \quad &\text{in}\quad Q_3, \\
v & =& w\quad &\text{on}\quad \partial_p Q_3,
\end{array}\right.
\end{equation*}
 then 
\begin{equation}\label{u-v-close}
\int_{Q_2}{|Dw - D v|^p\, dz}\leq \e^p.
\end{equation}
\end{lemma}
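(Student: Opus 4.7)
The plan is to argue by contradiction, paralleling the compactness scheme of Lemma~\ref{lm:compare-solution-1}. Suppose the conclusion fails: there exist $\e_0>0$, positive numbers $\alpha_k$, vector fields $\A^k$ satisfying \eqref{structural-reference-1}--\eqref{structural-reference-3} with $\A^k(\cdot,\cdot,0)=0$ and $\fint_{Q_4}\d_{\A^k,\A^k_{B_4}}(z)\,dz\le k^{-p}$, weak solutions $w^k$ of $w^k_t=\div\A^k(z,\alpha_k w^k,Dw^k)$ in $Q_{7/2}$ with $\|w^k\|_{L^\infty}\le M_0/\alpha_k$ and $\fint_{Q_{7/2}}|Dw^k|^p\le C(\Lambda,p,n)$, and weak solutions $v^k$ of the corresponding frozen equation on $Q_3$ with $v^k=w^k$ on $\partial_p Q_3$, such that $\int_{Q_2}|Dw^k-Dv^k|^p\,dz>\e_0^p$ for every $k$. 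Set $m^k:=w^k-v^k$.

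The subcase $\alpha_k\to\infty$ along a subsequence is disposed of as in Lemma~\ref{lm:compare-solution-1}: a comparison with the constants $\pm\|w^k\|_{L^\infty(\partial_p Q_3)}$ (which are sub/supersolutions of the frozen homogeneous equation thanks to $\A^k_{B_4}(t,\cdot,0)=0$) yields $\|v^k\|_{L^\infty(Q_3)}\le M_0/\alpha_k\to 0$. Lemma~\ref{energy-estimate} with $\F=f=0$ applied to both $w^k$ and $v^k$ then forces $\int|Dw^k|^p,\int|Dv^k|^p\to 0$, contradicting the assumed lower bound. We may therefore assume $\alpha_k\to\alpha\in[0,\infty)$.

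In the main subcase, Lemma~\ref{gradient-est-II} (with $\F=f=0$) together with parabolic embedding produces the uniform bounds $\sup_t\int(m^k)^2+\int|Dm^k|^p\le C$ and $\|m^k\|_{L^{p(n+2)/n}(Q_3)}\le C$. Hence, along a subsequence, $m^k\to m$ strongly in $L^p(Q_3)\cap L^2(Q_3)$. The crucial claim is $m\equiv 0$, proved by the same $h_\e(m^k_\pm)$ test-function calculation as in Lemma~\ref{lm:compare-solution-1}, after decomposing
\[
\A^k(z,\alpha_k w^k,Dw^k)-\A^k_{B_4}(t,\alpha_k v^k,Dv^k) = M^k+L^k+O^k,
\]
where $M^k=\A^k_{B_4}(t,\alpha_k v^k,Dw^k)-\A^k_{B_4}(t,\alpha_k v^k,Dv^k)$ (monotone in $Dm^k$), $L^k=\A^k_{B_4}(t,\alpha_k w^k,Dw^k)-\A^k_{B_4}(t,\alpha_k v^k,Dw^k)$ (bounded via \eqref{structural-reference-3-reformulation} by $\omega(\alpha_k|m^k|)(1+|Dw^k|^{p-1})$), and $O^k=\A^k(z,\alpha_k w^k,Dw^k)-\A^k_{B_4}(t,\alpha_k w^k,Dw^k)$ (bounded by $\d_{\A^k,\A^k_{B_4}}(1+|Dw^k|^{p-1})$). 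The $M^k$ and $L^k$ pieces are handled verbatim as in Lemma~\ref{lm:compare-solution-1}; the new piece $O^k$ is controlled by Young's inequality together with H\"older's inequality and the uniform higher integrability $Dw^k\in L^{p+\e_0}(Q_{7/2})$ supplied by Theorem~\ref{thm:higherint}, using both the boundedness and the $L^1$-smallness of $\d_{\A^k,\A^k_{B_4}}$. Passing $k\to\infty$ and then $\e\to 0^+$ yields $\int m_\pm=0$, so $m\equiv 0$.

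Finally, apply Lemma~\ref{lm:compare-gradient-part1} with $u\rightsquigarrow w^k$, $h\rightsquigarrow v^k$, $\hat\A\rightsquigarrow\A^k_{B_4}$, $\F=f=0$ to obtain (for $p\ge 2$)
\[
\int_{Q_2}|Dm^k|^p\,dz \le C\Bigl(\int_{Q_{5/2}}\bigl[\omega(\alpha_k|m^k|)+\d_{\A^k,\A^k_{B_4}}\bigr]dz\Bigr)^{\frac{\e_0}{p+\e_0}} + C\bigl(\|m^k\|_{L^p}^p+\|m^k\|_{L^p}+\|m^k\|_{L^2}^2\bigr),
\]
together with the analogous $\sigma$-dependent inequality for $2n/(n+2)<p<2$ (letting $k\to\infty$ before optimizing $\sigma\to 0^+$). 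Each right-hand side term tends to zero: the oscillation integral by hypothesis; the Lebesgue norms of $m^k$ by the strong convergence $m^k\to 0$; and $\int\omega(\alpha_k|m^k|)\to 0$ by dominated convergence since $\omega$ is bounded with $\omega(0)=0$ and $\alpha_k m^k\to\alpha m=0$ a.e.\ along a further subsequence. This contradicts $\int_{Q_2}|Dm^k|^p>\e_0^p$. The main obstacle is controlling the new oscillation piece $O^k$ uniformly in the parameter $\alpha_k$: this is the only genuinely new ingredient beyond Lemma~\ref{lm:compare-solution-1} and requires combining the hypothesis $\fint_{Q_4}\d_{\A^k,\A^k_{B_4}}\le k^{-p}$ with the higher integrability of $Dw^k$ in order to extract constants independent of $\alpha_k$.
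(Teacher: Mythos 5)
Your proposal is correct and follows essentially the same route as the paper's proof: contradiction/compactness, separate disposal of the subcase $\alpha_k\to\infty$ via the comparison principle and energy estimates, the claim $m^k\to 0$ proved with the $h_\e$ test function after splitting off the monotone piece, control of the oscillation term through Young's and H\"older's inequalities combined with the higher integrability of $Dw^k$ from Theorem~\ref{thm:higherint}, and a final application of Lemma~\ref{lm:compare-gradient-part1}. The only (immaterial) difference is the choice of intermediate term in the three-way decomposition — the paper routes through $\A^k(z,\alpha_k v^k,Dw^k)$ rather than $\A^k_{B_4}(t,\alpha_k w^k,Dw^k)$ — both of which are handled identically by \eqref{structural-reference-3} and the definition of $\d_{\A^k,\A^k_{B_4}}$.
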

\begin{proof}
 Suppose by contradiction that estimate \eqref{u-v-close} is not true. Then there exist
$\e_0,\, p, \, \Lambda,   \, n,\, \K, \, M_0$, 
 a sequence of  positive numbers  $\{\alpha_k\}_{k=1}^\infty$, a sequence 
 $\{\A^k\}_{k=1}^\infty$  satisfying structural 
 conditions \eqref{structural-reference-1}--\eqref{structural-reference-3} and $\A^k(\cdot,\cdot,0)=0$, 
 and   a sequence of functions   $\{w^k\}_{k=1}^\infty$ 
such that
\begin{equation}\label{Theta_k-condition}
\fint_{Q_4}\Theta_k(z) \, dz \leq \frac{1}{k^p} \qquad \mbox{ with }\quad 
\Theta_k(x,t) 
:= \d_{\A^k, \A^k_{B_4}}(x,t), 
\end{equation}
 $w^k$ is a weak solution of 
$w^k_t =\div \A^k(z,\alpha_k w^k,D w^k)$ in  $Q_\frac72$
with 
\begin{equation}\label{gradient-bounded-ass}
 \|w^k\|_{L^\infty(Q_\frac72)}\leq \frac{M_0}{\alpha_k} \quad\mbox{ and }\quad 
\fint_{Q_\frac72}{ |D  w^k|^p\, dz} \leq C(\Lambda,p, n),
\end{equation}
\begin{equation}\label{contradiction-conclusion}
\int_{Q_2} |D w^k - Dv^k|^p \, dz > \e_0^p  \quad\mbox{for all } k.
\end{equation}
Here  $v^k$  is a weak solution  of
\begin{equation*}
\left \{
\begin{array}{lcll}
v^k_t &=& \div  \A^k_{B_4}(t,\alpha_k v^k,D v^k)  \quad &\text{in}\quad Q_3, \\
v^k & =& w^k\quad &\text{on}\quad \partial_p Q_3.
\end{array}\right.
\end{equation*}
We have from  Theorem~\ref{thm:higherint}   and \eqref{gradient-bounded-ass} that
\begin{equation}\label{higher-w^k}
\int_{Q_3} |D w^k|^{p+\e_0}\, dz \leq C(\Lambda, n,p).
\end{equation}
Also, by using Proposition~\ref{pro:compa},
Lemma~\ref{gradient-est-II} for  $\F\rightsquigarrow 0$, $f\rightsquigarrow 0$, $\A \rightsquigarrow  \A^k$, $\hat\A \rightsquigarrow  \A^k_{B_4}$,  and \eqref{gradient-bounded-ass}, we obtain
\begin{equation}\label{bounds-for-vk}
 \|v^k\|_{L^\infty(Q_3)}\leq \frac{M_0}{\alpha_k} \quad\mbox{ and }\quad 
\fint_{Q_3}{ |D  v^k|^p\, dz} \leq C(\Lambda,p, n).
\end{equation}
If the sequence $\{\alpha_k\}$ has a subsequence converging to $+\infty$, then 
we infer from
 Lemma~\ref{energy-estimate}, \eqref{gradient-bounded-ass}, 
 and \eqref{bounds-for-vk} that
\begin{align*}
\liminf_{k\to \infty}\int_{Q_2} |D w^k-D v^k|^p\, dz=0
\end{align*}
which contradicts \eqref{contradiction-conclusion}. 
Thus, we conclude that 
 $\{\alpha_k\}$ is  bounded and hence there exist a subsequence (still labeled  $\{\alpha_k\}$) and  a constant  $\alpha\in [0,\infty)$   such that
 $\alpha_k \to \alpha$. We claim that: up to a subsequence, there holds
\begin{equation}
\label{claim:w^kv^k->0}
 \lim_{k\to\infty }\Big[\|w^k - v^k\|_{L^p(Q_3)}
 +\|w^k - v^k\|_{L^2(Q_3)}\Big]=0.
\end{equation}
In order to prove \eqref{claim:w^kv^k->0}, we first  note that
by applying Lemma~\ref{gradient-est-II}  for $u \rightsquigarrow  w^k$, $\F \rightsquigarrow  0$,
$f \rightsquigarrow  0$, $v \rightsquigarrow  v^k$, 
and using  \eqref{gradient-bounded-ass}, we get 
\[
 \sup_{t\in (-9,9)}\int_{B_3} |w^k -  v^k|^2\, dx
+ \int_{Q_3}  |D w^k - D v^k|^p \, dz\leq C. 
\]
This  together with  the parabolic embedding gives 
\begin{equation}\label{difference-control}
\int_{Q_3} |w^k - v^k|^{p\frac{n+2}{n}} \, dz 
\leq  C.
\end{equation}
Thus  there exist  subsequences, still denoted by $\{w^k\}$ and $\{v^k\}$, and   a function $m(z)$ such that $w^k-v^k\to m$ strongly in $L^p(Q_3)$ and $D (w^k -  v^k) \rightharpoonup Dm$ 
weakly in $L^p(Q_3)$. We next show that $m(z)=0$ in $Q_3$.

Let $m^k := w^k - v^k$. By taking a further subsequence, we can assume that
$m^k(z)\to m(z)$ for a.e. $z\in Q_3$. Let $h_\e(s)$ be given by \eqref{def:h_e}.
By using $h_\e(m^k)$ as a test function in the equations for $w^k$ and $v^k$ and subtracting the
resulting expressions, we obtain:
\begin{align*}
\int_{B_3}\Big(\int_0^{m^k_+(x,t)}  h_\e(s)\, ds \Big)dx 
&+\int_0^t\int_{B_3}\langle \A^k_{B_4}(t,\alpha_k v^k,D w^k)-\A^k_{B_4}(t,\alpha_k v^k,D v^k) , Dm^k\rangle h_\e'(m^k)\, dz\\
&=  \int_0^t\int_{B_3} \langle \A^k_{B_4}(t,\alpha_k v^k,D w^k)-\A^k(z,\alpha_k w^k,D w^k) , Dm^k
\rangle h_\e'(m^k)\,
dz
\end{align*}
for all $t\in (-9,9)$.
Let us first assume that $p\geq 2$.
 Writing $\A^k_{B_4}(t,\alpha_k v^k,D w^k)-\A^k(z,\alpha_k w^k,D w^k)=
[\A^k_{B_4}(t,\alpha_k v^k,D w^k)-\A^k(z,\alpha_k v^k,D w^k)]
+[\A^k(z,\alpha_k v^k,D w^k)-\A^k(z,\alpha_k w^k,D w^k)]$ and
using  structural conditions \eqref{structural-reference-1} and \eqref{structural-reference-3}, we then deduce that
\begin{align*}
\int_{B_3}\Big(\int_0^{m^k_+(x,t)}  h_\e(s)\, ds \Big)dx +
&\Lambda^{-1}\int_0^t\int_{B_3} |Dm^k|^p h_\e'(m^k) \, dz\\
&\leq    \int_0^t\int_{B_3} \big(\Theta_k + 
\Lambda\alpha_k m^k\big)(1+|Dw^k|^{p-1}) |Dm^k|h_\e'(m^k) \, dz
 \nonumber. 
\end{align*}
It follows by applying Young's inequality to the last integral and canceling  like terms that
\begin{align*}
&\int_{B_3}\Big(\int_0^{m^k_+(x,t)}  h_\e(s)\, ds \Big)dx
\leq   C(\Lambda,p)\int_0^t\int_{B_3} \Big[\Theta_k^{\frac{p}{p-1}} + 
\alpha_k^{\frac{p}{p-1}} (m^k)^{\frac{p}{p-1}}\Big](1+|Dw^k|^p)  h_\e'(m^k) \, dz\\
 &\leq  C(\Lambda, p)\Bigg[\frac1\e\int_{Q_3}\Theta_k^{\frac{p}{p-1}} (1+|Dw^k|^p)\, dz
 +\alpha_k^{\frac{p}{p-1}} \e^{\frac{1}{p-1}}\int_{Q_3} (1+|Dw^k|^p)\, dz\Bigg]\\
 &\leq  C(\Lambda, p)\Bigg[\frac1\e
 \Big(\int_{Q_3}\Theta_k^{\frac{p(p+\e_0)}{(p-1)\e_0}}\, dz\Big)^{\frac{\e_0}{p+\e_0}}
\Big(\int_{Q_3} 
 (1+|Dw^k|^{p+\e_0})\, dz\Big)^{\frac{p}{p+\e_0}}
 +\alpha_k^{\frac{p}{p-1}} \e^{\frac{1}{p-1}}\int_{Q_3} (1+|Dw^k|^p)\, dz\Bigg].
\end{align*}
Therefore, we can use  \eqref{higher-w^k}  and the fact $\{\Theta_k\}$ is bounded to deduce that
\begin{align*}
\int_{B_3}\Big(\int_0^{m^k_+(x,t)}  h_\e(s)\, ds \Big)dx 
\leq  C\Big[\frac1\e
 \Big(\int_{Q_3} \Theta_k\, dz\Big)^{\frac{\e_0}{p+\e_0}}
 +\alpha_k^{\frac{p}{p-1}} \e^{\frac{1}{p-1}}\Big].
\end{align*}
Hence, by letting $k\to\infty$ and making use of \eqref{Theta_k-condition} we  obtain
\[
 \int_{B_3}\Big(\int_0^{m_+(x,t)}  h_\e(s)\, ds \Big)dx\leq C \alpha^{\frac{p}{p-1}} \e^{\frac{1}{p-1}}.
\]
We next let $\e \to 0^+$ to get
$\int_{B_3} m_+(x,t)\,dx \leq 0$ for every $ t\in (-9,9)$.
We then conclude that 
\[
\int_{Q_3} m_+(x,t)\,dx dt=0,
\]
and hence $m(z)\leq 0$ for a.e. $z\in Q_3$. The above arguments can be modified as done in the proof of Lemma~\ref{lm:compare-solution-1}
to get the same conclusion for the case $2n/(n+2)<p<2$ as well. Now by interchanging the role of $w^k$ and $v^k$, we also have $m(z)\geq 0$ for a.e.
$z\in Q_3$. Thus, $m= 0$ in $Q_3$ and so $w^k-v^k\to 0$ in $L^p(Q_3)$. In the case $p<2$, we can interpolate  
this $L^p$ convergence  with \eqref{difference-control}  
 as in the proof of Lemma~\ref{lm:compare-solution-1} 
 to infer further 
that $w^k-v^k\to 0$ in $L^2(Q_3)$. Thus,  claim \eqref{claim:w^kv^k->0} is proved.

We now use \eqref{claim:w^kv^k->0} to derive a contradiction. 
By applying   Lemma~\ref{lm:compare-gradient-part1} for $\alpha \rightsquigarrow 
\alpha_k$, $u \rightsquigarrow  w^k$, $h \rightsquigarrow  v^k$, $\F \rightsquigarrow  0$, $f \rightsquigarrow  0$, $\A \rightsquigarrow  
\A^k$, $\hat\A \rightsquigarrow  \A^k_{B_4}$, and using  the facts $\omega(r)\leq \Lambda r$ and $\{\alpha_k\}$ is bounded, we obtain
\begin{align*}
\int_{Q_2}{ |D m^k|^p dz }\leq C \Bigg[ \|m^k\|_{L^p(Q_3)}^p+ \|m^k\|_{L^p(Q_3)} + \|m^k\|_{L^2(Q_3)}^2
+ \Big(\| m^k\|_{L^1(Q_3)}+ \| \Theta_k\|_{L^1(Q_3)}\Big)^{\frac{\e_0}{p+\e_0}} 
\Bigg]
 \end{align*}
for the case $p\geq 2$. Letting $k\to \infty$ and making use of \eqref{Theta_k-condition} and \eqref{claim:w^kv^k->0}, we conclude that \begin{equation}\label{Dw-Dv->0}
\lim_{k\to\infty}\int_{Q_2}{ |D w^k-D v^k|^p dz }= \lim_{k\to\infty}\int_{Q_2}{ |D m^k|^p dz }=0.
\end{equation}
On the other hand, for the case $p<2$ we get
\begin{align*}
\int_{Q_2} |Dm^k|^p  dz
\leq  C \sigma^{\frac{p}{2-p}}
  &+C\sigma^{-1}\Big( \|m^k\|_{L^p(Q_3)}^p +  \|m^k\|_{L^p(Q_3)} +\|m^k\|_{L^2(Q_3)}^2\Big)\\
  &+ C \sigma^{\frac{-p}{p-1}} \Big(\| m^k\|_{L^1(Q_3)}+ \| \Theta_k\|_{L^1(Q_3)}\Big)^{\frac{\e_0}{p+\e_0}}
  \end{align*}
  for all $\sigma>0$ small. By first taking $k\to\infty$ and then taking $\sigma\to 0^+$, we still arrive at \eqref{Dw-Dv->0}.  
  
As \eqref{Dw-Dv->0} contradicts  \eqref{contradiction-conclusion}, we have produced a contradiction and the lemma is proved.
\end{proof}

\subsection{Proof of the main gradient estimate}
We need  a key $L^\infty$ gradient estimate from \cite{KuM1} to prove Theorem~~\ref{thm:main2}. This estimate is a generalization of the fundamental gradient estimate by
DiBenedetto and Friedman \cite{DF1} for the parabolic $p$-Laplace system (see also \cite[Chapter~8]{D2}). The statement below  can be deduced from 
\cite[Theorem~1.1]{KuM1} and the discussion therein.
\begin{theorem}[interior Lipschitz estimate, \cite{KuM1}]\label{thm:Lip-est}
Assume that $p>\frac{2 n}{n+2}$. Let $h$ be a weak solution of 
 \[
 h_t -\div \, \ba(x,t,Dh) =0 \quad \mbox{in}\quad  Q_3,
 \]
 with the vector field $\ba: Q_3\times \R^n\to \R^n$ satisfying the assumptions
\begin{equation*}
\left \{
\begin{array}{lcll}
   \langle \partial_\xi \ba(x,t,\xi) \eta,\eta\rangle & \geq & \Lambda^{-1} (s^2+ |\xi|^2)^{\frac{p-2}{2}}|\eta|^2,\\
|\ba(x,t,\xi)| + |\partial_\xi \ba(x,t,\xi)| \, (s^2+ |\xi|^2)^{\frac12}  &\leq &   \Lambda (s^2+ |\xi|^2)^{\frac{p-1}{2}}, \\
 |\ba(x,t,\xi)-\ba(\bar x,t,\xi)| &\leq &  \Lambda \hat\omega(|x - \bar x|) (s^2+ |\xi|^2)^{\frac{p-1}{2}}
\end{array}\right.
\end{equation*}
whenever $\xi,\, \eta\in \R^n$ and $(x,t),\, (\bar x,t)\in Q_3$. Here $s\in [0,1]$ and $\hat\omega:[0,\infty)\to [0,1]$ is a  nondecreasing function satisfying the Dini condition
\[
 \int_0 \hat\omega(\rho)\frac{d\rho}{\rho} <\infty.
\]
Then $Dh\in L^\infty_{loc}(Q_3)$ and  there exists a constant $c$ depending only on $n$, $p$, $\Lambda$, and  $\hat\omega(\cdot)$ such that 
\[
 |Dh(x_0,t_0)| \leq c \Big[\fint_{Q_r(x_0,t_0)}  (|Dh|^p +1)\, dz\Big]^{\frac{d}{p}}
\]
holds whenever $Q_r(x_0,t_0)\subset Q_3$ with $(x_0,t_0)$ is a Lebesgue point for $Dh$. The constant $d\geq 1$ is the  number given by \eqref{de:d}.
\end{theorem}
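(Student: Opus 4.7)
The plan is to follow the Kuusi--Mingione strategy from \cite{KuM1}, which combines DiBenedetto's intrinsic geometry with a Campanato-type comparison scheme tailored to the Dini modulus $\hat\omega$. The core input is the classical interior $L^\infty$ gradient estimate of DiBenedetto--Friedman \cite{DF1} for the model equation $v_t=\div(|Dv|^{p-2}Dv)$ on intrinsic cylinders; on top of that one must control the oscillation of $Dh$ generated by the $x$-dependence of $\ba$ via a summable excess-decay, which is precisely where $\int_0\hat\omega(\rho)\frac{d\rho}{\rho}<\infty$ enters.

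First I would fix a Lebesgue point $(x_0,t_0)$ for $Dh$ and, for a candidate cylinder $Q_r(x_0,t_0)\subset Q_3$, introduce the intrinsic parameter
\[
\lambda^{p}\;\approx\;\fint_{Q_r^\lambda(x_0,t_0)}(|Dh|^{p}+1)\,dz,
\]
solvable by continuity and monotonicity of the right-hand side in $\lambda$; by the very definition of $d$ in \eqref{de:d}, matching this with the standard cylinder $Q_r(x_0,t_0)$ gives $\lambda\le c\big[\fint_{Q_r}(|Dh|^p+1)\,dz\big]^{d/p}$, which is exactly the quantity on the right-hand side of the conclusion. All further analysis is done on the intrinsic cylinder $Q_{\varrho}^\lambda(x_0,t_0)$ for $\varrho\le r$, on which the equation looks non-degenerate at scale $\lambda$.

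Next, on a dyadic sequence of intrinsic cylinders $Q_{\varrho_j}^\lambda\subset Q_{\varrho_0}^\lambda$ with $\varrho_{j+1}=\sigma\varrho_j$, I would compare $h$ to the solution $v_j$ of the frozen homogeneous problem
\[
(v_j)_t-\div\,\ba(x_0,t,Dv_j)=0\quad\text{in }Q_{\varrho_j}^\lambda,\qquad v_j=h\quad\text{on }\partial_p Q_{\varrho_j}^\lambda.
\]
The DiBenedetto--Friedman $L^\infty$ gradient estimate gives $\|Dv_j\|_{L^\infty(Q_{\varrho_j/2}^\lambda)}\lesssim \lambda$, together with an excess-decay of the form
\[
\fint_{Q_{\sigma\varrho_j}^\lambda}|Dv_j-(Dv_j)_{Q_{\sigma\varrho_j}^\lambda}|^{p}\,dz
\;\le\;c\,\sigma^{\beta p}\fint_{Q_{\varrho_j}^\lambda}|Dv_j-(Dv_j)_{Q_{\varrho_j}^\lambda}|^{p}\,dz
\]
for some $\beta>0$. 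A comparison estimate (the parabolic analogue of the ones in Section~\ref{sec:AppGra} and Lemma~\ref{gradient-est-II}, but sharpened to be linear in the modulus of continuity) bounds $\fint_{Q_{\varrho_j}^\lambda}|Dh-Dv_j|^p\,dz$ by $\hat\omega(\varrho_j)^{\alpha}\lambda^{p}$ for a suitable exponent $\alpha$ coming from Young's inequality and the growth condition on $\ba$. Combining the two, the excess functional $E_j:=\fint_{Q_{\varrho_j}^\lambda}|Dh-(Dh)_{Q_{\varrho_j}^\lambda}|^{p}\,dz$ satisfies
\[
E_{j+1}\;\le\;c\,\sigma^{\beta p}E_j+c\,\sigma^{-(n+2)}\hat\omega(\varrho_j)^{\alpha}\lambda^{p}.
\]

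Choosing $\sigma$ small to absorb the first term and iterating yields $E_j\le c\lambda^{p}\big(\sigma^{\beta pj}+\sum_{i\le j}\sigma^{\beta p(j-i)}\hat\omega(\varrho_i)^{\alpha}\big)$, and the Dini hypothesis $\int_0\hat\omega(\rho)\,d\rho/\rho<\infty$ makes the double sum summable in $j$. Hence the averages $(Dh)_{Q_{\varrho_j}^\lambda}$ form a Cauchy sequence and, by the Lebesgue differentiation theorem at $(x_0,t_0)$, converge to $Dh(x_0,t_0)$, yielding
\[
|Dh(x_0,t_0)|\;\le\;|(Dh)_{Q_{\varrho_0}^\lambda}|+c\,\lambda\;\le\;c\,\lambda,
\]
which is exactly the claimed bound after expressing $\lambda$ in terms of the $L^p$ average on $Q_r(x_0,t_0)$. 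The main obstacle, and where the argument of \cite{KuM1} is genuinely delicate, is proving the comparison estimate $\fint|Dh-Dv_j|^p\lesssim \hat\omega(\varrho_j)^{\alpha}\lambda^p$ uniformly in the intrinsic parameter $\lambda$: one must simultaneously handle the $p$-Laplacian degeneracy/singularity and the fact that the intrinsic cylinder depends on $\lambda$, which requires careful use of energy estimates on $Q_{\varrho_j}^\lambda$ and the monotonicity structure \eqref{structural-reference-1}; the non-degenerate case $p=2$ is classical Campanato, and the general case is the technical heart of \cite{KuM1}.
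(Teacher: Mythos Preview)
The paper does not supply its own proof of this theorem: it is quoted from \cite{KuM1} (as the label and the sentence preceding the statement indicate) and used as a black box in the proof of Theorem~\ref{thm:main2}. So there is nothing in the paper to compare your outline against; your sketch is a reasonable summary of the Kuusi--Mingione strategy from the original source.

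If you intend the sketch to stand on its own, two points deserve tightening. First, the exponent on the modulus in the comparison estimate matters: you write $\hat\omega(\varrho_j)^{\alpha}$ ``for a suitable exponent $\alpha$ coming from Young's inequality,'' but if $\alpha<1$ the Dini hypothesis $\int_0\hat\omega(\rho)\,d\rho/\rho<\infty$ would not make the iterated error summable; in \cite{KuM1} the comparison is arranged so that the error is controlled by $\hat\omega(\varrho_j)$ itself (or a power $\ge 1$, which is harmless since $\hat\omega\le 1$). Second, the excess-decay you invoke is not available directly for the frozen-in-$x$ solution $v_j$ of $(v_j)_t=\div\,\ba(x_0,t,Dv_j)$, because $\ba(x_0,t,\cdot)$ still depends measurably on $t$; in \cite{KuM1} one performs a further comparison to a genuinely $p$-Laplace-type reference solution on which the DiBenedetto--Friedman $C^{1,\beta}$ theory applies, and it is that second solution which carries the excess-decay. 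These are the two places where a reader would ask for more than what your outline currently provides.
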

We are now ready to prove our main result.
\begin{proof}[\bf PROOF OF THEOREM~\ref{thm:main2}]
Thanks to Theorem~\ref{thm:conditionalLq}, it is enough to prove that $\A$ admits the local Lipschitz
 approximation property with constant $M_0$. We first observe that $\A$ satisfies structural conditions
 \eqref{structural-reference-1} and \eqref{structural-reference-2}.
 Indeed, the first condition in \eqref{strengthen-stru}
  implies \eqref{structural-reference-1} (see for example \cite[Lemma~1]{T}) and 
  \eqref{structural-reference-2} follows from the facts
  \[
 |\A(z,u,\xi)|\leq \Lambda  (\mu^2+ | \xi|^2)^{\frac{p-1}{2}}\mbox{ if } p\geq 2 \quad \mbox{and}\quad   |\A(z,u,\xi)|\leq \frac{\Lambda}{p-1}  (\mu^2+ | \xi|^2)^{\frac{p-1}{2}} \mbox{ if } p<2,
\]
which are consequences of the second condition in \eqref{strengthen-stru} and the assumption $\A(\cdot,\cdot,0)=0$.
 
We next verify the Lipschitz
 approximation property for $\A$. Let $\e>0$, and let $\delta_1$ and  $\delta_2$ be the  corresponding 
constants  given by Lemma~\ref{lm:compare-solution-1} and Lemma~\ref{lm:compare-solution-2} respectively.
Let $\delta :=\min{\{\delta_1, \delta_2\}}$.
Now assume that  $\lambda\geq 1$, $0<\theta<2$, $Q_{4\theta}^\lambda(\bar z)
 \subset Q_6$, $\tilde \A$ given by \eqref{eq:tildeA}, $\tilde \F$, and $\tilde f$  satisfy
\[
  \fint_{Q_4} \Big[
\sup_{u\in \overline \K}\sup_{ \xi\in\R^n}\frac{|\tilde \A(x,t, u,\xi) - \tilde \A_{B_4}(t, u,\xi)|}{1+ |\xi|^{p-1}}
\Big] \, dxdt + \fint_{Q_4}  |\tilde \F|^p \, dz
+\Big(\fint_{Q_4}  |\tilde f|^{\bar p'} \, dz\Big)^{\hat p}\leq \delta^p, 
\]
 and $\tilde u$ is a weak solution  to 
\begin{equation*}
 \tilde u_t  =  \div \tilde \A(z,\theta \hat\lambda  \tilde u,D  \tilde u) 
+\div(|\tilde \F|^{p-2} \tilde \F) +\tilde f\quad 
\text{in}\quad Q_4
\end{equation*}
with $\|\tilde u\|_{L^\infty(Q_4)}\leq M_0/\theta\hat\lambda$ and $\fint_{Q_4} |D  \tilde u|^p\, dz\leq 1$.
We want to show that \eqref{eq:appxgoodgradient} holds true. For this, we  note that $\tilde\A(\cdot,\cdot,0)=0$ and 
$\tilde\A$ satisfies conditions
\eqref{structural-reference-1}--\eqref{structural-reference-3} with the same constant $\Lambda$ as $\A$. 
 Now let $\tilde w$  be a weak solution of
\begin{equation*}\label{eq-for-w}
\left \{
\begin{array}{lcll}
\tilde w_t &=&\div \tilde\A( z,\theta\hat\lambda \tilde w,D \tilde w)  \quad &\text{in}\quad Q_\frac72, \\
\tilde w & =& \tilde u\quad &\text{on}\quad \partial_p Q_\frac72
\end{array}\right.
\end{equation*}
and $\tilde v$ be a weak solution of the frozen equation
\begin{equation}\label{tildev-eq}
\left\{
\begin{array}{lcll}
\tilde v_t &=&\div \tilde\A_{B_4}(t, \theta\hat\lambda \tilde v,D \tilde v)  \quad &\text{in}\quad Q_3, \\
\tilde v & =&\tilde  w\quad &\text{on}\quad \partial_p Q_3.
\end{array}\right.
\end{equation}
The existence of these weak solutions   is guaranteed  by Remark~\ref{existence+uniqueness}.
Also, from Proposition~\ref{pro:compa} we have
 \begin{equation}\label{vwu}
 \|\tilde v\|_{L^\infty(Q_3)}\leq \|\tilde w\|_{L^\infty(Q_\frac72)}\leq \|\tilde u\|_{L^\infty(Q_4)}\leq M_0/\theta\hat\lambda.
 \end{equation}
In addition, we  infer from  
  Lemma~\ref{gradient-est-II}  that
 \begin{align}\label{DvDwDu}
 \|D\tilde v\|_{L^p(Q_3)}
 &\leq C(\Lambda,p,n)\Big( 1+ \|D\tilde w\|_{L^p(Q_\frac72)}\Big)\nonumber\\
 &\leq C(\Lambda,p,n)\Bigg[ 1+ \|D\tilde u\|_{L^p(Q_4)} +\|\tilde \F\|_{L^p(Q_4)}
 +\Big(\int_{Q_4}  |\tilde f|^{\bar p'} \, dz\Big)^{\frac{\hat p}{p}}\Bigg]
 \leq C(\Lambda,p,n).
 \end{align}
 Therefore,  by applying Lemma~\ref{lm:compare-solution-1} and Lemma~\ref{lm:compare-solution-2}
 for $\A \rightsquigarrow  \tilde\A$ and $\alpha \rightsquigarrow \theta\hat\lambda$ we obtain
\begin{equation*}
\int_{Q_3}{|D\tilde u - D \tilde w|^p\, dz}\leq \e^p \quad\mbox{ and }\quad \int_{Q_2}{|D\tilde w - D \tilde v|^p\, dz}\leq \e^p.
\end{equation*}
Consequently, if we take $\tilde \Psi := D\tilde v$ then it follows from the triangle inequality that
\[
 \|D\tilde u-\tilde \Psi\|_{L^p(Q_2)}\leq 2\e.
 \]
Thus  it remains to show that there exists  $N>0$ depending only on $p$, $n$, $\Lambda$,  $M_0$, and $\K$ such that
 \begin{equation}\label{Dv-est}
  \|D\tilde v\|_{L^\infty(Q_2)}\leq N.
 \end{equation}
 To this end, we use   the interior H\"older regularity theory (see \cite[Theorem~1.1,~pages 41]{D2} for the case $p\geq 2$ and \cite[Theorem~1.1,~pages 77]{D2}
for the case $p<2$)  to infer that 
 there exist $\bar \alpha\in (0,1)$ and $\gamma>0$ depending only on $n$, $p$,  and $\Lambda$ such that
\begin{align}\label{tildev-0holder}
 |\tilde v(x_1, t)- \tilde v(x_2,t)|
 &\leq 
 \gamma \| \tilde v\|_{L^\infty(Q_3)} |x_1 - x_2|^{\bar \alpha}
 \leq  
 \frac{\gamma \, M_0}{\theta \hat\lambda} |x_1 - x_2|^{\bar \alpha}\quad \mbox{for all}\quad (x_1,t),\, (x_2,t)\in Q_{\frac52}.
\end{align}
Notice that the presence of  $\theta \hat \lambda$ in  \eqref{tildev-eq} does not prevent us from applying the H\"older theory as the equation satisfy all required conditions  in  \cite[Chapter~2,~page~16]{D2} with structural constants independent of   $\theta$ and $\hat \lambda$.

Now let $\ba(x,t,\xi):= \tilde\A_{B_4}(t,  \theta \hat\lambda\tilde v(x,t),\xi)$. Then  \eqref{tildev-eq} implies that $\tilde v$ satisfies
 \begin{equation*}\label{eq:v-Di}
 \tilde v_t =\div \ba(x,t,D  \tilde v)  \quad\text{in}\quad Q_3
\end{equation*}
in the weak sense. 
Let $s:=\mu/\lambda$. Observe that $\tilde \A(\cdot, \cdot,\xi)= \frac{1}{\lambda^{p-1}} \A(\cdot,\cdot,\lambda\xi)$
and $\partial_\xi \tilde \A(\cdot, \cdot,\xi)= \frac{1}{\lambda^{p-2}} (\partial_\xi\A)(\cdot,\cdot,\lambda\xi)$.  Thanks to the third condition in \eqref{strengthen-stru} and H\"older estimate \eqref{tildev-0holder}, the coefficient $\ba$ satisfies 
\begin{align*}
|\ba(x_1, t,\xi) -\ba(x_2,t,\xi)| 
 &\leq \fint_{B_4} | \tilde\A(x,t,  \theta \hat\lambda\tilde v(x_1,t),\xi) -\tilde\A(x,t, \theta \hat\lambda\tilde v(x_2,t),\xi)|\, dx\\
 &\leq \Lambda  \, \theta \hat\lambda |\tilde v(x_1, t)- \tilde v(x_2,t) |\,  \frac{1}{\lambda^{p-1}}(\mu^2+|\lambda\xi|^2)^{\frac{p-1}{2}}\\
 &\leq \Lambda  \gamma M_0 |x_1 - x_2|^{\bar \alpha} \, (s^2+|\xi|^2)^{\frac{p-1}{2}}
\end{align*}
for any $(x_1,t),\, (x_2,t)\in Q_{\frac52}$ and any $\xi\in \R^n$. Moreover, we have 
\begin{align*}
  \langle \partial_\xi \ba(x,t,\xi) \eta,\eta\rangle 
 &=\fint_{B_4} \langle \partial_\xi\tilde\A(y,t,  \theta \hat\lambda\tilde v(x,t),\xi) \eta,\eta\rangle \, dy\\
 &\geq \frac{\Lambda^{-1}}{\lambda^{p-2}}  (\mu^2+ |\lambda \xi|^2)^{\frac{p-2}{2}}|\eta|^2
  =\Lambda^{-1} (s^2+ |\xi|^2)^{\frac{p-2}{2}}|\eta|^2,\\
  |\partial_\xi \ba(x,t,\xi)| 
 &\leq \fint_{B_4} |\partial_\xi\tilde\A(y,t,  \theta \hat\lambda\tilde v(x,t),\xi) | dy 
 \leq   \frac{\Lambda}{\lambda^{p-2}}  
 (\mu^2+ |\lambda \xi|^2)^{\frac{p-2}{2}}
 =\Lambda (s^2+ |\xi|^2)^{\frac{p-2}{2}},\\
 |\ba(x,t,\xi)|
 &\leq \fint_{B_4} |\tilde\A(y,t,  \theta \hat\lambda\tilde v(x,t),\xi) | dy 
 \leq \frac{\Lambda}{\lambda^{p-1}}  
 (\mu^2+ | \lambda \xi|^2)^{\frac{p-1}{2}}=\Lambda (s^2+ |\xi|^2)^{\frac{p-1}{2}}.
\end{align*}
Since  $s=\mu/\lambda\in [0,1]$, we therefore can conclude from Theorem~\ref{thm:Lip-est} and estimate \eqref{DvDwDu}  that 
\[
\|D \tilde v\|_{L^\infty(Q_2)}\leq  C(\Lambda,p,n, M_0),
\]
which gives desired  estimate \eqref{Dv-est}. Thus the proof of Theorem~\ref{thm:main2} is complete.
\end{proof}

\begin{remark}
An alternative way of proving estimate   \eqref{Dv-est} and working directly with $\A$ instead of $\tilde \A$ is to transform equation \eqref{tildev-eq} to its original  setting and then employ the intrinsic gradient bound in \cite{KuM1}. Precisely,  let us rescale $\tilde v$  by defining
\begin{equation*}
 v (x,t) := \left \{
\begin{array}{lcll}
\theta \hat\lambda\,  \tilde v (\frac{x-\bar x}{\theta}, \frac{t - \bar t}{\lambda^{2-p} \theta^2}) &\text{if}\quad p\geq 2, \\
\theta \hat \lambda\, \tilde  v \Big(\frac{x-\bar x}{\theta \lambda^{\frac{p-2}{2}}}, \frac{t - \bar t}{ \theta^2}\Big) &\qquad  \, \text{ if}\quad \frac{2n}{n+2}<p<2.
\end{array}\right.
\end{equation*}
 Then $ v$ is a weak solution of 
 \begin{equation*}
 v_t =\div \A_{B}(t,  v,D  v)  \quad\text{in}\quad Q^\lambda_{3\theta}(\bar z),
\end{equation*}
 where $B$ is the projection of $Q^\lambda_{4\theta}(\bar z)$ onto $\R^n$, i.e. $B= B_{4\theta}(\bar x)$ if $p\geq 2$ and $B= B_{\lambda^{\frac{p-2}{2}}4\theta}(\bar x)$ if $p<2$.
 Moreover, we deduce from \eqref{vwu}--\eqref{DvDwDu} that
\begin{equation}\label{Lp-lambda}
\| v\|_{L^\infty(Q^\lambda_{3\theta}(\bar z))}\leq M_0 \quad \mbox{and}\quad 
\Big( \fint_{Q^\lambda_{3\theta}(\bar z)} |Dv|^p\, dz \Big)^{\frac1p}  \leq C(\Lambda,p,n) \, \lambda.
\end{equation}
Hence  if $p\geq 2$, then it follows from the H\"older regularity theory (see \cite[Theorem~1.1,~pages 41]{D2} )  that
 there exists $\bar \alpha\in (0,1)$ and $\gamma>0$ depending only on $n$, $p$,  and $\Lambda$ such that
\[
 |v(x_1, t)- v(x_2,t)|\leq  \gamma \|v\|_{L^\infty(Q^\lambda_{3\theta}(\bar z))} \, \Big(\frac{|x_1 - x_2|}{\theta }
 \Big)^{\bar \alpha}
 \leq \gamma M_0 \Big(\frac{|x_1 - x_2|}{\theta }
 \Big)^{\bar \alpha}
  \quad \mbox{for all}\quad (x_1,t),\, (x_2,t)\in Q^\lambda_{\frac52 \theta}(\bar z).
\]
In the case $p<2$,  we can use 
 \cite[Theorem~1.1,~pages 77]{D2} to obtain:
 \[
 |v(x_1, t)- v(x_2,t)|\leq  \gamma \|v\|_{L^\infty(Q^\lambda_{3\theta}(\bar z))} \, \Big(\frac{|x_1 - x_2|}{\lambda^{\frac{p-2}{2}}\theta }
 \Big)^{\bar \alpha}
\leq  \gamma M_0  \Big(\frac{|x_1 - x_2|}{\theta \lambda^{\frac{p-2}{2}} }
 \Big)^{\bar \alpha}
  \quad \mbox{for all}\quad (x_1,t),\, (x_2,t)\in Q^\lambda_{\frac52 \theta}(\bar z).
\]
Therefore, if we let $\ba(x,t,\xi):= \A_{B}(t,  v(x,t),\xi)$ then for any $(x_1,t),\, (x_2,t)\in Q_{\frac52\theta}^\lambda(\bar z)$ and any $\xi\in \R^n$ we have
\begin{align*}
|\ba(x_1, t,\xi) -\ba(x_2,t,\xi)| 
 &\leq \fint_{B} | \A(x,t,   v(x_1,t),\xi) -\A(x,t,  v(x_2,t),\xi)|\, dx\\
 &\leq \Lambda  | v(x_1, t)-  v(x_2,t) | \,  (\mu^2+|\xi|^2)^{\frac{p-1}{2}}\leq \Lambda  \hat\omega ( |x_1 - x_2|) \,  (\mu^2+|\xi|^2)^{\frac{p-1}{2}},
\end{align*}
where
\begin{equation*}
 \hat\omega(r)  := \left \{
\begin{array}{lcll}
\gamma M_0 \Big(\frac{r}{\theta}\Big)^{\bar{\alpha}} &\text{if}\quad p\geq 2, \\
\gamma M_0\Big(\frac{r}{\theta \lambda^{\frac{p-2}{2}}}\Big)^{\bar \alpha} &\qquad  \, \text{ if}\quad \frac{2n}{n+2}<p<2.
\end{array}\right.
\end{equation*}
Thus we  conclude from \cite[Theorem~4.1 and (4.34)]{KuM1}
that there exist $C_0=C_0(n,p,\Lambda)>1$ and $\sigma_0=\sigma_0(n,p,\Lambda,  M_0)>0$ such that: if $Q_{ \sigma \theta}^\lambda(z_0)\subset Q_{\frac52 \theta}^\lambda (\bar z)$ with $\sigma\leq \sigma_0$ and $z_0$ is a Lebesgue point of $Dv$, then
\begin{equation*}
|Dv(z_0)|\leq C_0 \Big[\fint_{Q_{\frac{\sigma}{2} \theta}^\lambda(z_0)} (|D v| +1)^p\, dz\Big]^{\frac1p}.
\end{equation*}
By replacing $\sigma_0$ by  $\min{\{\sigma_0, \frac12\}}$ if necessary, we can assume that $\sigma_0\leq \frac12$. Then for any point $z_0\in Q_{2\theta}^\lambda(\bar z)$ which is  a Lebesgue point of $Dv$, we have $Q_{ \sigma_0 \theta}^\lambda(z_0)\subset Q_{\frac52 \theta}^\lambda (\bar z)$ and hence 
\begin{equation*}
|Dv(z_0)|\leq C_0 \Big[\fint_{Q_{\frac{\sigma_0}{2} \theta}^\lambda(z_0)} (|D v| +1)^p\, dz\Big]^{\frac1p}.
\end{equation*}
Using the second estimate in \eqref{Lp-lambda} to estimate the above right hand side, we deduce that
\[
\|D  v\|_{L^\infty(Q_{2\theta}^\lambda(\bar z))}\leq  C(\Lambda,p,n,M_0)\, \lambda. 
\]
By rescaling back, we obtain 
$
\|D \tilde v\|_{L^\infty(Q_2)}\leq  C(\Lambda,p,n,M_0)$
which gives  \eqref{Dv-est}. 
\end{remark}

\section{Higher integrability of gradients}\label{sec:proof-higher} 
In this section we  prove Theorem~\ref{thm:higherint}  about  the higher integrability  of weak solutions to equation \eqref{eq:uQ4}.
The proof of this  will be given in Subsection~\ref{sub:higherint}
and  is based on the arguments  in \cite{KiL,Mi1} (see also \cite[Section~8.2]{DMS} and \cite[Lemma~12]{LMV}). 
The key ingredient is a Caccioppoli type estimate.
\subsection{Caccioppoli type estimates}
Let $\eta\in C_0^\infty(B_2(0))$ be such that $0\leq \eta\leq 1$, $\eta=1$ in $B_1(0)$, and $|D\eta|\leq 2$ in $B_2(0)$. For $\bar x\in \R^n$ and $\rho>0$, set $\eta_{\bar x, \rho}(x) = \eta\big(\frac{2(x-\bar x)}{\rho}\big)$.
We then define 
the weighted mean
\[
u^\eta_{\bar x, \rho}= u^\eta_{\bar x, \rho}(t) =\frac{\int_{B_\rho(\bar x)} \eta_{\bar x, \rho}(x)^p u(x,t)\, dx }{\int_{B_\rho(\bar x)} \eta_{\bar x, \rho}^p\, dx}.
\]
The next lemma implies the absolute continuity of $t\mapsto  u^\eta_{\bar x, \rho}(t)$.
\begin{lemma}\label{lm:AC}
 Let $p>1$ and suppose that $u$ is a weak solution of \eqref{eq:uQ4}. 
 Then for any $t_1, t_2\in (-16,16)$ with $t_1<t_2$, we have
 \[
|u^\eta_{\bar x, \rho}(t_1) -   u^\eta_{\bar x, \rho}(t_2)|\leq C(\Lambda, n, p)\Bigg[ \frac{1}{\rho^{n+1}}\int_{t_1}^{t_2}\int_{B_\rho(\bar x)}
\Big(1+
|D u|^{p-1} + |\F|^{p-1}\Big)
  dx dt + \frac{1}{\rho^n} \int_{t_1}^{t_2}\int_{B_\rho(\bar x)} |f|  dx dt\Bigg].
 \]
\end{lemma}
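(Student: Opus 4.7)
The plan is to derive the estimate from the Steklov-averaged weak formulation \eqref{weak-2} using the time-independent test function
\[
\phi(x) := \frac{\eta_{\bar x,\rho}(x)^p}{\int_{B_\rho(\bar x)} \eta_{\bar x,\rho}^p\, dy},
\]
which is admissible since $\phi\in W_0^{1,p}(B_\rho(\bar x))$. With this choice the left-hand side of \eqref{weak-2} becomes $\partial_t [u^\eta_{\bar x,\rho}]_h(t)$, because $\phi$ does not depend on $t$ and $\int \phi\, dx=1$. Thus for $t\in(-16,16-h)$,
\[
\partial_t [u^\eta_{\bar x,\rho}]_h(t) = -\int_{B_\rho(\bar x)}\bigl\langle [\A(z,\alpha u,Du)]_h + [|\F|^{p-2}\F]_h,\, D\phi\bigr\rangle\, dx + \int_{B_\rho(\bar x)}[f]_h\, \phi\, dx.
\]

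Next I would integrate this identity from $t_1$ to $t_2$ (fixing $t_1<t_2$ in $(-16,16)$ and choosing $h$ small so that $t_2<16-h$), then let $h\to 0^+$. The left-hand side converges to $u^\eta_{\bar x,\rho}(t_2)-u^\eta_{\bar x,\rho}(t_1)$ at Lebesgue points of $t\mapsto u^\eta_{\bar x,\rho}(t)$ (and standard modification gives an absolutely continuous representative). The convergence on the right-hand side follows from $L^1$-convergence of Steklov averages: by \eqref{structural-reference-2} we have $\A(z,\alpha u,Du)\in L^{p/(p-1)}\subset L^1_{\mathrm{loc}}$, $|\F|^{p-2}\F\in L^{p/(p-1)}$, and $f\in L^1$ on the cylinder, so $[\cdot]_h\to \cdot$ in $L^1(B_\rho(\bar x)\times(t_1,t_2))$.

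The key quantitative inputs are the bounds $\int_{B_\rho(\bar x)}\eta_{\bar x,\rho}^p\, dx \ge c(n)\rho^n$ (since $\eta\equiv 1$ on $B_{\rho/2}(\bar x)$) and $|D\eta_{\bar x,\rho}|\le 4/\rho$, which give
\[
\|\phi\|_{L^\infty}\le \frac{C(n)}{\rho^n},\qquad \|D\phi\|_{L^\infty}\le \frac{C(n,p)}{\rho^{n+1}}.
\]
Combining these with the growth bound $|\A(z,\alpha u,Du)|\le \Lambda(1+|Du|^{p-1})$ from \eqref{structural-reference-2}, the passage to the limit yields
\[
|u^\eta_{\bar x,\rho}(t_2)-u^\eta_{\bar x,\rho}(t_1)| \le \frac{C}{\rho^{n+1}}\int_{t_1}^{t_2}\!\!\int_{B_\rho(\bar x)}\!\!\bigl(\Lambda(1+|Du|^{p-1})+|\F|^{p-1}\bigr)\,dx\,dt + \frac{C}{\rho^n}\int_{t_1}^{t_2}\!\!\int_{B_\rho(\bar x)}|f|\,dx\,dt,
\]
which is the stated inequality.

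The only delicate point is the justification of the limit $h\to 0^+$ and the identification of the limit function as an absolutely continuous representative of $u^\eta_{\bar x,\rho}$; this is a routine consequence of the equivalence between \eqref{weak-1} and \eqref{weak-2} recorded right after the definition of the Steklov average, combined with Lebesgue's differentiation theorem applied in $t$. Everything else is a direct size estimate, so I do not anticipate any real obstacle beyond this standard Steklov-averaging argument.
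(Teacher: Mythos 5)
Your proposal is correct and follows essentially the same route as the paper: test the Steklov formulation \eqref{weak-2} with (a normalized multiple of) $\eta_{\bar x,\rho}^p$, integrate in time, let $h\to 0^+$, and use the growth bound \eqref{structural-reference-2} together with $|D\eta_{\bar x,\rho}|\le 4/\rho$ and $\int_{B_\rho(\bar x)}\eta_{\bar x,\rho}^p\,dx\ge c(n)\rho^n$. The only difference is that you normalize the test function up front, whereas the paper divides by $\int\eta_{\bar x,\rho}^p\,dx$ at the end; this is purely cosmetic.
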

\begin{proof} For a function $g(x,t)$ and $h>0$, let $[g]_h (x,t)$  denote its Steklov average
defined as in \eqref{Steklov-ave}.
By using  $\phi=\eta_{\bar x, \rho}(x)^p$ in the Steklov formulation \eqref{weak-2}, we get 
 for  $t_1, t_2\in (-16,16)$ that
 \begin{align*}
 & \int_{B_\rho(\bar x)} [u \eta_{\bar x, \rho}^p]_h(\cdot,t_2) \,dx-\int_{B_\rho(\bar x)} [u \eta_{\bar x, \rho}^p]_h(\cdot,t_1) \,dx
 =\int_{t_1}^{t_2}\int_{B_\rho(\bar x)}\partial_t [u \eta_{\bar x, \rho}^p]_h(\cdot,t) \,dx dt\\
 &=   -p\int_{t_1}^{t_2}\int_{B_\rho(\bar x)} \langle [ \A(\cdot , \alpha u,D u)]_h(\cdot,t) +  [ |\F|^{p-2} \F]_h(\cdot,t), D\eta_{\bar x, \rho}\rangle
  \eta_{\bar x, \rho}^{p-1} dx dt +\int_{t_1}^{t_2}\int_{B_\rho(\bar x)} [f]_h(\cdot,t) \eta_{\bar x, \rho}^p dx dt.
    \end{align*}
 Using the growth condition \eqref{structural-reference-2} for $\A$ and the choice of  the function $\eta(x)$,
we find after passing to the limit $h\to 0^+$ that
\begin{align*}
  \Big| \int_{B_\rho(\bar x)} u(\cdot,t_2) \eta_{\bar x, \rho}^p \,dx
  &-
 \int_{B_\rho(\bar x)} u(\cdot, t_1) \eta_{\bar x, \rho}^p \,dx\Big|\\
 &\leq 
 \frac{C(\Lambda, p)}{\rho}\int_{t_1}^{t_2}\int_{B_\rho(\bar x)} \Big(1+ |D u|^{p-1} + |\F|^{p-1}\Big)
 \, dx dt +\int_{t_1}^{t_2}\int_{B_\rho(\bar x)} |f| \, dx dt
 \end{align*}
for every $t_1, t_2\in (-16,16)$ with $t_1<t_2$. This gives the lemma as desired.
\end{proof}
Lemma~\ref{lm:AC} is only used to prove the following Caccioppoli type estimate.
\begin{lemma}[Caccioppoli type estimate]\label{lm:Caccio}
 Let $p>1$ and suppose that $u$ is  a weak solution of \eqref{eq:uQ4}. Then there exists a constant $C>0$ 
 depending
 only on $\Lambda$, $n$, and $p$ such that
 \begin{align*}\label{eq:Cacci}
 \sup_{\bar t - \tau_1<t<\bar t +\tau_1}\int_{B_\rho(\bar x)} |u(x,t) &- u^\eta_{\bar x, 2\rho}(t)|^2 dx 
 +\int_{Q_{\bar z}(\rho, \tau_1)} |Du|^p dz
 \leq C \Bigg[\frac{1}{\tau_2 -\tau_1}\int_{Q_{\bar z}(2\rho, \tau_2)} |u - u^\eta_{\bar x, 2\rho}(t)|^2 dz\\ 
 &+ \frac{1}{\rho^p} \int_{Q_{\bar z}(2\rho, \tau_2)} |u - u^\eta_{\bar x, 2\rho}(t)|^p dz + 
 \int_{Q_{\bar z}(2\rho, \tau_2)} \big(1 + |\F|^p\big)\, dz +\Big( \int_{Q_{\bar z}(2\rho, \tau_2)}|f|^{\bar p'}dz \Big)^{\hat p}\Bigg]
 \end{align*}
 for all $\rho>0$ and $0<\tau_1<\tau_2$ satisfying $Q_{\bar z}(2\rho, \tau_2)\subset Q_4$.
\end{lemma}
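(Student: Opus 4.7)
The plan is to test the weak formulation of \eqref{eq:uQ4} with
\[
\phi(x,t) = \zeta(t)\,\eta_{\bar x, 2\rho}(x)^p\,w(x,t),\qquad w(x,t) := u(x,t) - u^\eta_{\bar x, 2\rho}(t),
\]
where $\zeta\in C^\infty_0(\bar t-\tau_2,\bar t+\tau_2)$ satisfies $\zeta\equiv 1$ on $(\bar t-\tau_1,\bar t+\tau_1)$, $0\le\zeta\le 1$, and $|\zeta'|\le 2/(\tau_2-\tau_1)$. Because of the modest time regularity of $u$, I would implement this via the Steklov-averaged formulation \eqref{weak-2} and then pass to the limit $h\to 0^+$, invoking Lemma~\ref{lm:AC} to justify the continuity of $t\mapsto u^\eta_{\bar x, 2\rho}(t)$.

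The key algebraic observation is the pointwise-in-$t$ identity
\[
\int_{B_{2\rho}(\bar x)} \eta_{\bar x, 2\rho}^p\,w(\cdot,t)\,dx = 0 \qquad \forall t,
\]
which is immediate from the definition of the weighted mean. This identity kills the contribution of $\partial_t u^\eta_{\bar x, 2\rho}$ and converts the time term into $\int \partial_t u \cdot \eta^p w\,dx = \tfrac12 \tfrac{d}{dt}\int \eta^p w^2\,dx$. Multiplying by $\zeta$ and integrating by parts in $t$, the left side will produce the supremum bound $\tfrac12\sup_t\int \eta^p w^2\,dx$ plus a $\zeta'$-contribution dominated by $\frac{C}{\tau_2-\tau_1}\int_{Q_{\bar z}(2\rho,\tau_2)} w^2\,dz$, which supplies the first term on the right.

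For the spatial part I compute $D\phi = \zeta\eta^p Du + p\zeta\eta^{p-1}(D\eta)w$, using that $u^\eta_{\bar x, 2\rho}$ depends only on $t$. The coercive piece $\int \zeta\eta^p\langle \A(z,\alpha u,Du),Du\rangle\,dz$ is handled as in Lemma~\ref{energy-estimate}: split off $\A(z,\alpha u,0)$, then apply \eqref{structural-reference-1} when $p\ge 2$ or Lemma~\ref{simple-est} when $1<p<2$ to extract a term $\sim\int \zeta\eta^p|Du|^p\,dz$. The lower-order piece $p\int\zeta\eta^{p-1}\langle\A,D\eta\rangle w\,dz$ is bounded by \eqref{structural-reference-2} and $|D\eta|\le C/\rho$ via Young's inequality, producing an absorbable $\varepsilon\int\zeta\eta^p|Du|^p\,dz$ plus the desired $\rho^{-p}\int|w|^p\,dz$. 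Unlike Lemma~\ref{energy-estimate} we do not assume $\A(\cdot,\cdot,0)=0$, but the resulting inhomogeneities are $O(1)$ by \eqref{structural-reference-2} and absorb into the $\int(1+|\F|^p)\,dz$ term, and the $\F$ contribution is treated with standard Young's inequality. For the forcing term I reuse the argument leading to \eqref{f-sobolev}: H\"older with exponents $(\bar p',\bar p)$ together with the parabolic embedding gives
\[
\Big|\int f\phi\,dz\Big| \le \varepsilon\int|D(\zeta\eta^p w)|^p\,dz + \varepsilon\sup_t\int(\zeta\eta^p w)^2\,dx + C(\varepsilon)\Big(\int_{Q_{\bar z}(2\rho,\tau_2)}|f|^{\bar p'}dz\Big)^{\hat p},
\]
the first two of which absorb into the left-hand side. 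Choosing $\varepsilon$ small and using $\zeta\equiv 1$ on $(\bar t-\tau_1,\bar t+\tau_1)$ together with $\eta_{\bar x,2\rho}\equiv 1$ on $B_\rho(\bar x)$ then yields the stated inequality.

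The principal obstacle is the rigorous handling of the time derivative when the reference mean $u^\eta_{\bar x,2\rho}$ itself varies in $t$: both the cancellation identity and the limit passage $h\to 0^+$ rest on Lemma~\ref{lm:AC}, and one must check that the Steklov analogue $[u]_h - \bigl([u]_h\bigr)^\eta_{\bar x,2\rho}$ still has vanishing weighted integral so the essential cancellation survives before $h\to 0^+$. The secondary technical point is the bifurcation $p\ge 2$ vs.\ $1<p<2$ in extracting coercivity without the hypothesis $\A(\cdot,\cdot,0)=0$, which is overcome exactly by the $\tau$-balancing device used in Lemma~\ref{simple-est}.
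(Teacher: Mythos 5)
Your proposal is correct and follows essentially the same route as the paper: the same test function $\zeta\,\eta_{\bar x,2\rho}^p\,(u-u^\eta_{\bar x,2\rho})$, the same cancellation $\int_{B_{2\rho}(\bar x)}\eta_{\bar x,2\rho}^p\,(u-u^\eta_{\bar x,2\rho})\,dx=0$ (combined with Lemma~\ref{lm:AC}) to eliminate the $\partial_t u^\eta_{\bar x,2\rho}$ contribution, the same decomposition of $\A$ through $\A(z,\alpha u,0)$ with \eqref{structural-reference-1} for $p\ge 2$ and Lemma~\ref{simple-est} for $1<p<2$, and the same treatment of the $f$-term via the parabolic embedding as in \eqref{f-sobolev}. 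The Steklov-average technicality you flag is exactly the one the paper defers to Remark~\ref{rm:Steklov}.
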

\begin{proof}
Let $\sigma\in C_0^\infty((\bar t-\tau_2, \bar t + \tau_2))$ be such that $0\leq \sigma\leq 1$, $\sigma=1$ in $[\bar t-\tau_1, \bar t + \tau_1]$, and $|\partial_t \sigma|\leq 2/(\tau_2 -\tau_1)$. By using
$\sigma(t)  \eta_{\bar x, 2\rho}^p (u - u^\eta_{\bar x, 2\rho})$
as a test function in \eqref{eq:uQ4}, we obtain
\begin{align*}
&\frac12 \int_{\bar t -\tau_2}^\tau \sigma(t)  \frac{d}{dt}\Big[\int_{B_{2\rho}(\bar x)} \eta_{\bar x, 2\rho}^p
|u - u^\eta_{\bar x, 2\rho}|^2 dx \Big]dt
+\int_{\bar t -\tau_2}^\tau \int_{B_{2\rho}(\bar x)}\partial_t u^\eta_{\bar x, 2\rho} \sigma(t)  \eta_{\bar x, 2\rho}^p (u - u^\eta_{\bar x, 2\rho})dx dt\\
&=-\int_{\bar t -\tau_2}^\tau \int_{B_{2\rho}(\bar x)} \langle  \A(z, \alpha u,D u) +|\F|^{p-2}\F , 
D  [\eta_{\bar x, 2\rho}^p (u - u^\eta_{\bar x, 2\rho})]\rangle \sigma(t) dz\\
&\quad +\int_{\bar t -\tau_2}^\tau \int_{B_{2\rho}(\bar x)} f \eta_{\bar x, 2\rho}^p (u - u^\eta_{\bar x, 2\rho}) \sigma(t) dz.
\end{align*}
Now since $t\mapsto  u^\eta_{\bar x, 2\rho}(t)$ is absolutely continuous by Lemma~\ref{lm:AC} and thus $\partial_t u^\eta_{\bar x, 2\rho}$ is integrable on 
$(\bar t - \tau_2, \bar t +\tau_2)$, we see that the second term on the left hand side is zero. 
Furthermore, we can 
apply integration by parts for the first term and 
decompose $\A(z, \alpha u,D u)$ as $[\A(z, \alpha u,D u)-\A(z, \alpha u,0)] + \A(z, \alpha u,0)$.
Then, by using \eqref{structural-reference-1}--\eqref{structural-reference-2} and   Lemma~\ref{simple-est} with
$\xi=0$ and $\tau =1/2$ we obtain for every $\tau\in (\bar t-\tau_2, \bar t + \tau_2)$ that
\begin{align*}
&\frac12   \int_{B_{2\rho}(\bar x)} \eta_{\bar x, 2\rho}(x)^p |u(x,\tau) - u^\eta_{\bar x, 2\rho}(\tau)|^2\sigma(\tau) dx +c(\Lambda, p)\int_{K_\tau}  |D  u|^p  \eta_{\bar x, 2\rho}^p\sigma(t) dz\\
&\leq c(\Lambda, p)\int_{K_\tau}    \eta_{\bar x, 2\rho}^p\sigma(t) dz
+  \frac12 \int_{K_\tau} \sigma'(t)   \eta_{\bar x, 2\rho}^p |u - u^\eta_{\bar x, 2\rho}|^2 dz\\
&\quad + p \int_{K_\tau} \Big[ \Lambda (1+|D u|^{p-1}) +|\F|^{p-1}\Big] 
\eta_{\bar x, 2\rho}^{p-1}  | D  \eta_{\bar x, 2\rho}| \sigma(t) |u - u^\eta_{\bar x, 2\rho}| dz\\
&\quad  + \int_{K_\tau} \big(\Lambda + |\F|^{p-1}\big)
  \eta_{\bar x, 2\rho}^p \sigma(t) |D u| dz
   +\int_{K_\tau} |f| \, \big|\eta_{\bar x, 2\rho}^p (u - u^\eta_{\bar x, 2\rho}) \sigma(t) \big | \, dz,
\end{align*}
where $K_\tau := B_{2\rho}(\bar x) \times (\bar t - \tau_2, \tau)$.
Using Young's inequality, it follows that
\begin{align*}
&  \int_{B_{2 \rho}(\bar x)} \eta_{\bar x, 2\rho}(x)^p |u(x,\tau) - u^\eta_{\bar x, 2\rho}(\tau)|^2\sigma(\tau) dx +\int_{K_\tau}  |D  u|^p  \eta_{\bar x, 2\rho}^p\sigma(t) dz
\leq \frac{C}{\tau_2 - \tau_1} \int_{K_\tau} |u - u^\eta_{\bar x, 2\rho}|^2 dz\\
&+\frac{C}{\rho^p}  \int_{K_\tau}   |u - u^\eta_{\bar x, 2\rho}|^p dz
+C\int_{K_\tau}  \big(1 + |\F|^p\big) dz + C \int_{K_\tau} |f| \, \big|\eta_{\bar x, 2\rho}^p (u - u^\eta_{\bar x, 2\rho}) \sigma(t) \big| \, dz.
\end{align*}
But as in \eqref{f-sobolev}, the last integral can be estimated as follows
\begin{align*}
&\int_{K_\tau}   |f| \, \big |\eta_{\bar x, 2\rho}^p (u - u^\eta_{\bar x, 2\rho}) \sigma(t) \big|\, dz
\leq \e \int_{K_\tau}  \big|D [\eta_{\bar x, 2\rho}^p (u - u^\eta_{\bar x, 2\rho})  ]\big|^p  \sigma(t)\, dz\\
& +\e \sup_{t\in (\bar t -\tau_2,\tau)}\int_{B_{2\rho}(\bar x)} \eta_{\bar x, 2\rho}^p |u(x,t) - u^\eta_{\bar x, 2\rho}(t)|^2\sigma(t)\, dx + \frac{C(n,p)}{\e^{\frac{p+n}{p(n+1)-n}}} \|f\|_{L^{\bar p'}(K_\tau)}^{\frac{p(n+2)}{p(n+1)-n}}\quad \forall \e>0.
\end{align*}
Therefore, by choosing $\e$ suitably and using the definition of $\hat p$ in \eqref{de:d}   we  deduce that
\begin{align*}
&  \int_{B_{2 \rho}(\bar x)} \eta_{\bar x, 2\rho}(x)^p |u(x,\tau) - u^\eta_{\bar x, 2\rho}(\tau)|^2\sigma(\tau) dx +\frac12 \int_{K_\tau}  |D  u|^p  \eta_{\bar x, 2\rho}^p\sigma(t) dz\\
&\leq\frac12 \sup_{t\in (\bar t -\tau_2,\bar t +\tau_2)}\int_{B_{2\rho}(\bar x)} \eta_{\bar x, 2\rho}^p |u(x,t) - u^\eta_{\bar x, 2\rho}(t)|^2\sigma(t)\, dx+ \frac{C}{\tau_2 - \tau_1} \int_{Q_{\bar z}(2\rho, \tau_2)} |u - u^\eta_{\bar x, 2\rho}|^2 dz\\
 &\quad +\frac{C}{\rho^p}  \int_{Q_{\bar z}(2\rho, \tau_2)}   |u - u^\eta_{\bar x, 2\rho}|^p dz
+C\int_{Q_{\bar z}(2\rho, \tau_2)}  \big(1 + |\F|^p\big) dz 
+C \Big( \int_{Q_{\bar z}(2\rho, \tau_2)}|f|^{\bar p'}dz \Big)^{\hat p}.
\end{align*}
for every $\tau\in (\bar t-\tau_2, \bar t + \tau_2)$.
From this we infer the conclusion of the lemma.
\end{proof}
\begin{remark}\label{rm:Steklov}
We note that the  use of the test function in the proof of Lemma~\ref{lm:Caccio}
is justified by making use of the alternate weak formulation  \eqref{weak-2}. For simplicity,  we will 
not, however, display these technicalities.
\end{remark}

\subsection{Proof of the higher integrability}\label{sub:higherint}
We are  ready to prove  the higher integrability of gradients.
\begin{proof}[\bf PROOF OF THEOREM~\ref{thm:higherint}]
Given the Caccioppoli type estimate in Lemma~\ref{lm:Caccio}, the proof of Theorem~\ref{thm:higherint}
can be deduced  from the arguments  in \cite{KiL,Mi1}. Notice that the only use of  equation 
\eqref{eq:uQ4} is  to obtain Lemma~\ref{lm:Caccio}. For this reason and for completeness, 
 we choose to present the proof for only the case $p\geq 2$. Let 
$h(z) := 1 +|\F(z)|\,$ and $\,
 \hat f(z) := |f(z)|^{\frac{\bar p' \hat p}{p}}$.
  For $\lambda>0$, we denote 
 \begin{align*}
&E(\lambda) := \Big\{z\in Q_3:\, z\mbox{ is a Lebesgue point of $|Du|$ and }
|Du(z)| >\lambda \Big\},\\
&E_h(\lambda)  := \big\{z\in Q_3: h >\lambda \big\},\quad \mbox{ and }\quad  
E_{\hat f}(\lambda)  := \big\{z\in Q_3: \hat f >\lambda \big\}.
\end{align*}
Also, define 
\[
  \lambda_0^{\frac{p}{d}} := \fint_{Q_4} (|D u|^p +h^p)\, dz+\frac{1}{|Q_4|}\Big( \int_{Q_4}|f|^{\bar p'}dz \Big)^{\hat p}
  \quad \mbox{and}\quad \bar B^{\frac{p}{d}} := 40^{n+2}.  
 \]
Then for any  $\lambda\geq \bar B \lambda_0$, by applying a modification of Lemma~\ref{lm:covering} 
 we obtain:
 there exists a sequence of  {\it intrinsic cylinders}  $\{Q_{r_i}^\lambda(z_i)\}$ with 
$z_i\in Q_3$ and $r_i\in (0, \frac{1}{20}]$
that satisfies the following properties
\begin{enumerate}
 \item[a)] $\{Q_{4 r_i}^\lambda(z_i)\}$ is disjoint and
$ E(\lambda) \subset \bigcup_{i=1}^\infty Q_{20 r_i}^\lambda(z_i)$.
\item[b)] $\fint_{Q_{r_i}^\lambda(z_i)}\big(|Du|^p + h^p\big)\, dz +\frac{1}{|Q_{r_i}^\lambda(z_i)|}\Big( \int_{Q_{r_i}^\lambda(z_i)}|f|^{\bar p'}dz \Big)^{\hat p}= \lambda^p\,$ for each $i$.
\item[c)] $\fint_{Q_{r}^\lambda(z_i)}\big(|Du|^p + h^p\big)\, dz +\frac{1}{|Q_r^\lambda(z_i)|}
\Big( \int_{Q_r^\lambda(z_i)}|f|^{\bar p'}dz\Big)^{\hat p}  <\lambda^p\,$ for every $r\in (r_i,
 1]$.
\end{enumerate}
 Note that $Q_{20 r_i}^\lambda(z_i)\subset Q_4$ as $z_i\in Q_3$ and $20  r_i\leq 1$.\\
{\bf Claim:}
There exists  $C>0$ depending only on $\Lambda$, $n$, and $p$ such that: for each $i$, we have
 \begin{equation}\label{eq:claim}
  \fint_{Q_{20 r_i}^\lambda(z_i)} |D u|^p\, dz \leq C \Bigg[\Big(\fint_{Q_{4 r_i}^\lambda(z_i)}|Du|^{\frac{np}{n+2}} \, 
  dz\Big)^{\frac{n+2}{n}} + \fint_{Q_{4 r_i}^\lambda(z_i)} h^p\, dz
  +\frac{1}{|Q_{4 r_i}^\lambda(z_i)|}\Big( \int_{Q_{4 r_i}^\lambda(z_i)}|f|^{\bar p'}dz \Big)^{\hat p}\Bigg].
 \end{equation}
Assume the claim for the moment.  Set $q=np/(n+2)$.  Since it follows from property b) that
$\lambda^p \leq C_n \Big[
\fint_{Q_{20 r_i}^\lambda(z_i)} |D u|^p\, dz +\fint_{Q_{4 r_i}^\lambda(z_i)} h^p\, dz+\frac{1}{|Q_{4 r_i}^\lambda(z_i)|}\Big( \int_{Q_{4 r_i}^\lambda(z_i)}|f|^{\bar p'}dz \Big)^{\hat p}
\Big]$, we infer from \eqref{eq:claim} that
\begin{align*}
  \fint_{Q_{20 r_i}^\lambda(z_i)} |D u|^p\, dz +\lambda^p
  &\leq C \Bigg[\Big(\fint_{Q_{4 r_i}^\lambda(z_i)}|Du|^q \, dz\Big)^{\frac{p}{q}} +
  \fint_{Q_{4 r_i}^\lambda(z_i)} h^p\, dz+\frac{1}{|Q_{4 r_i}^\lambda(z_i)|}\Big( \int_{Q_{4 r_i}^\lambda(z_i)}\hat f^{\frac{p}{\hat p}}dz \Big)^{\hat p}\Bigg]\\
  &\leq 3C\eta^p \lambda^p + 
C\Bigg[ \Big(\frac{1}{|Q_{4 r_i}^\lambda(z_i)|}\int_{Q_{4 r_i}^\lambda(z_i) \cap E(\eta \lambda) }|Du|^q \, dz\Big)^{\frac{p}{q}}\\
 &\qquad \qquad + \frac{1}{|Q_{4 r_i}^\lambda(z_i)|}\int_{Q_{4 r_i}^\lambda(z_i) \cap E_h(\eta \lambda) }h^p \, dz +\frac{1}{|Q_{4 r_i}^\lambda(z_i)|}\Big( \int_{Q_{4 r_i}^\lambda(z_i) \cap E_{\hat f}(\eta \lambda)}\hat f^{\frac{p}{\hat p}}dz \Big)^{\hat p}\Bigg]
  \end{align*}
  for any $\eta>0$.  By choosing $\eta>0$ small and applying  H\"older inequality to the first integral on the right hand side, we then deduce that
  \begin{align*}
  \int_{Q_{20 r_i}^\lambda(z_i)} |D u|^p dz 
 &\leq C\Bigg[ 
 \Big(\fint_{Q_{4 r_i}^\lambda(z_i)  }|Du|^p \, dz\Big)^{\frac{p-q}{p}} 
 \int_{Q_{4 r_i}^\lambda(z_i) \cap E(\eta \lambda) }|Du|^q \, dz  + \int_{Q_{4 r_i}^\lambda(z_i) 
 \cap E_h(\eta \lambda) }h^p \, dz\\ &\qquad \qquad\qquad \qquad
 \qquad\qquad \qquad 
 \qquad\qquad \qquad 
 +\Big( \int_{Q_{4 r_i}^\lambda(z_i) \cap E_{\hat f}(\eta \lambda)}\hat f^{\frac{p}{\hat p}}dz \Big)^{\hat p}\Bigg]\\
 &\leq 
 C \Bigg[ \lambda^{p-q}\int_{Q_{4 r_i}^\lambda(z_i) \cap E( \eta \lambda) }|Du|^q \, dz 
  + \int_{Q_{4 r_i}^\lambda(z_i) \cap E_h(\eta \lambda) }h^p \, dz + \Big( \int_{Q_{4 r_i}^\lambda(z_i) \cap E_{\hat f}(\eta \lambda)}\hat f^{\frac{p}{\hat p}}dz \Big)^{\hat p}\Bigg],
\end{align*}
where we have used  property c) to obtain the last inequality.
This together with property a) gives
\begin{align*}
\int_{E( \lambda)} |Du|^p \, dz
&\leq C   \sum_i \Bigg[ 
\lambda^{p-q} \int_{Q_{4 r_i}^\lambda(z_i) \cap E( \eta \lambda) }|Du|^q \, dz +
\int_{Q_{4 r_i}^\lambda(z_i) \cap E_h(\eta \lambda) }h^p \, dz+\Big( \int_{Q_{4 r_i}^\lambda(z_i) \cap E_{\hat f}(\eta \lambda)}\hat f^{\frac{p}{\hat p}}dz \Big)^{\hat p}\Bigg]\\
&\leq C \Bigg[ \lambda^{p-q} \int_{E( \eta \lambda) }|Du|^q\, dz +\int_{ E_h(\eta \lambda) }h^p \, dz
+\Big( \int_{ E_{\hat f}(\eta \lambda)}\hat f^{\frac{p}{\hat p}}dz \Big)^{\hat p}\Bigg]
\qquad\qquad \forall \lambda \geq   \bar B \lambda_0.
\end{align*}
Therefore if we let $\lambda_1 := \eta^{-1} \bar B \lambda_0\geq   \bar B \lambda_0$, $\,\mu(dz) =|Du|^pdz$, $\,\hat \mu(dz) =|Du|^q dz$, $\,\nu(dz) =h^p dz$, and $\sigma(dz) =\hat f^{\frac{p}{\hat p}} dz$, then
\begin{align}\label{f-epsilon}
&\int_{E(\lambda_1)} |Du|^\e \, d\mu 
- \lambda_1^\e \mu\big(E(\lambda_0')\big) 
=\e \int_{\lambda_1}^\infty \lambda^{\e-1} \mu\big(E(\lambda)\big)\, d\lambda
\\
&\leq C\e \int_{\lambda_1}^\infty \lambda^{p-q+\e-1} \hat\mu\big(E(\eta \lambda)\big)\, d\lambda
+ C\e \int_{\lambda_1}^\infty \lambda^{\e-1} \nu\big(E_h(\eta \lambda)\big)\, d\lambda
+ C\e \int_{\lambda_1}^\infty \lambda^{\e-1} \sigma\big(E_{\hat f}(\eta \lambda)\big)^{\hat p}\, d\lambda\nonumber
\\
&= \frac{C\e}{\eta^{p-q+\e}} \int_{\eta \lambda_1}^\infty t^{p-q+\e-1} \hat\mu\big(E(t)\big)\, dt
+ \frac{C}{\eta^\e} \Bigg[\e\int_{\eta \lambda_1}^\infty t^{\e-1} \nu\big(E_h(t)\big)\, dt
+ \e\int_{\eta \lambda_1}^\infty t^{\e-1} 
\sigma\big(E_{\hat f}(t)\big)^{\hat p}\, dt\Bigg].\nonumber
\end{align}
Observe that
\begin{align*}
&(p-q+\e) \int_{\eta\lambda_1}^\infty t^{p-q+\e-1} \hat\mu\big(E(t)\big)\, dt
\leq\int_{E(\eta \lambda_1)} |Du|^{p-q+\e} \, d\hat\mu\\
&\leq \int_{E( \lambda_1)} |Du|^{p-q+\e} \, d\hat\mu 
+ \lambda_1^{\e}\int_{E(\eta\lambda_1)
\setminus E( \lambda_1)}  |Du|^{p-q}\, d\hat\mu
\leq \int_{E( \lambda_1)} |Du|^{p-q+\e} \, d\hat\mu +\lambda_1^{\e}\int_{E(\eta\lambda_1)
}  |Du|^{p-q}\, d\hat\mu
\end{align*}
and as $\lambda_1\geq 2$ we have
\begin{align*}
&\e\int_{\eta \lambda_1}^\infty t^{\e-1} \sigma\big(E_{\hat f}(t)\big)^{\hat p}\, dt
\leq \sum_{k=1}^\infty \e \int_{2^k}^{2^{k+1}} t^{\e-1} \sigma\big(E_{\hat f}(t)\big)^{\hat p}\, dt
\leq (2^\e - 1)\sum_{k=1}^\infty 2^{\e k}\sigma\big(E_{\hat f}(2^k)\big)^{\hat p}\\
&\leq (2^\e - 1) \Big[\sum_{k=1}^\infty 2^{\frac{\e}{\hat p} k}\sigma\big(E_{\hat f}(2^k)\big)\Big]^{\hat p}
\leq  \frac{ (2^\e - 1)2^\e}{(2^{\frac{\e}{\hat p}}-1)^{\hat p}} \Big(\int_{Q_3} \hat f^{\frac{\e}{\hat p}}\, d\sigma\Big)^{\hat p}
\leq  \frac{ 4^\e}{(2^{\frac{\e}{\hat p}}-1)^{\hat p}}\, \Big(\int_{Q_3}  |f|^{\bar p' (1+\frac{\e}{p})}\, dz\Big)^{\hat p},
\end{align*}
where we have used 
Remark~\ref{rm:lower-est-L^p-norm} to obtain the inequality 
right before the last one.
Therefore, it follows from \eqref{f-epsilon} that
\begin{align}\label{key-high}
\int_{E(\lambda_1)} |Du|^{p+\e} \, dz
&\leq \frac{C\e}{(p-q+\e)\eta^{p-q+\e}}  \Big[\int_{E(\lambda_1)} |Du|^{p +\e}\, dz
+\lambda_1^{\e}\int_{E(\eta\lambda_1)
}  |Du|^p\, dz \Big]\nonumber\\
&\quad +C\eta^{-\e} \Bigg[\int_{Q_3} h^{p+\e}\, dz
+\frac{4^\e}{(2^{\frac{\e}{\hat p}}-1)^{\hat p}}\, \Big(\int_{Q_3} 
|f|^{\bar p' (1+\frac{\e}{p})}\, dz\Big)^{\hat p}\Bigg] + 
\lambda_1^\e \int_{E(\lambda_1)} |Du|^p\, dz.
\end{align}
Choosing $\e>0$ small we can absorb the integral involving $|Du|^{p+\e}$ into the left hand side and get:
\begin{equation}\label{ini-higher}
\int_{E(\lambda_1)} |Du|^{p+\e} \, dz
\leq 2\lambda_1^\e \int_{E(\eta\lambda_1)} |Du|^p\, dz + C  \Big[\int_{Q_3} h^{p+\e}\, dz
+ \Big(\int_{Q_3}  |f|^{\bar p' (1+\frac{\e}{p})}\, dz\Big)^{\hat p}\Big].
\end{equation}
Notice that there is a difficulty in moving the term to the left side since it may be infinite. However,
this can be  
handled by using truncation as in the proof of \cite[Proposition~4.1]{KiL} and \cite[page~591--594]{Bo}.
The main observation there is that \eqref{key-high} still holds true if $E(\lambda_1)$ is replaced by
$E^k(\lambda_1)$ and $E(\eta\lambda_1)$ is replaced by
$E^k(\eta\lambda_1)$, where $E^k(\lambda) :=\{z\in Q_3: \, |Du|_k(z)>\lambda\}$ with 
$|Du|_k :=\min{\{|Du|, k\}}$.
As a consequence of \eqref{ini-higher}, we obtain
\begin{align*}
\int_{Q_3} |Du|^{p+\e} \, dz
&\leq \int_{E(\lambda_1)} |Du|^{p+\e} \, dz
+ \lambda_1^\e \int_{Q_3\setminus E(\lambda_1)} |Du|^p  \, dz\\
&\leq 3\lambda_1^\e \int_{Q_3} |Du|^p\, dz + C  \Big[\int_{Q_3} h^{p+\e}\, dz
+\Big(\int_{Q_3}  |f|^{\bar p' (1+\frac{\e}{p})}\, dz\Big)^{\hat p}\Big].
\end{align*}
This together with the definitions of $\lambda_1$ and $h(z)$ gives the desired estimate \eqref{higher-in}.

It remains to prove the claim.
Thanks to properties b) and c), estimate \eqref{eq:claim} will follow if we can show that
\begin{align}\label{eq:reverseHolder}
 \fint_{Q_{r_i}^\lambda(z_i)} |D u|^p\, dz
 \leq 
 C \Bigg[&\Big(\fint_{Q_{4 r_i}^\lambda(z_i)}|Du|^{\frac{n p}{n+2}} \, dz\Big)^{\frac{n+2}{n}}
 +\fint_{Q_{4 r_i}^\lambda(z_i)} h^p\,dz
 + \frac{1}{|Q_{4 r_i}^\lambda(z_i)|}
 \Big( \int_{Q_{4 r_i}^\lambda(z_i)}|f|^{\bar p'}dz \Big)^{\hat p}\Bigg].
 \end{align}
 Let us set $\bar z =z_i$ and $r:=r_i$. Then by applying Lemma~\ref{lm:Caccio} for $\rho=r$, 
 $\tau_1 = \lambda^{2-p} r^2$ and $\tau_2 = 2 \lambda^{2-p} r^2$, we obtain
  \begin{align*}
 \int_{Q_{\bar z}(r,\lambda^{2-p} r^2)} |Du|^p dz 
 &\leq 
 \frac{C}{\lambda^{2-p} r^2} \int_{Q_{\bar z}(2r, 2\lambda^{2-p} r^2)} |u - u^\eta_{\bar x, 2r}(t)|^2 dz + 
 \frac{C}{r^p} \int_{Q_{\bar z}(2r,2\lambda^{2-p} r^2)} |u - u^\eta_{\bar x, 2r}(t)|^p dz\\
 &\quad + C \Bigg[\int_{Q_{\bar z}(2r,2\lambda^{2-p} r^2)} h^p dz
 +  \Big(\int_{Q_{\bar z}(2r,2\lambda^{2-p} r^2)} |f|^{\bar p'} dz\Big)^{\hat p}\Bigg].
 \end{align*}
 Moreover, Young's inequality and property b) give for any $\e>0$ that
 \begin{align*}
 &\frac{C}{\lambda^{2-p} r^2} \int_{Q_{\bar z}(2r, 2\lambda^{2-p} r^2)} |u - u^\eta_{\bar x, 2r}(t)|^2 dz
 \leq  \e \lambda^p |Q_{\bar z}(r, \lambda^{2-p} r^2)| +\e^{\frac{2-p}{2}}\frac{C}{r^p} \int_{Q_{\bar z}(2r,2\lambda^{2-p} r^2)} |u - u^\eta_{\bar x, 2r}(t)|^p dz\\
&=\e\Bigg[
\int_{Q_{\bar z}(r, \lambda^{2-p} r^2)} (|Du|^p + h^p) dz
+\Big(\int_{Q_{\bar z}(r,\lambda^{2-p} r^2)} |f|^{\bar p'} dz\Big)^{\hat p}
\Bigg]
+\e^{\frac{2-p}{2}}\frac{C}{r^p} \int_{Q_{\bar z}(2r,2\lambda^{2-p} r^2)} |u - u^\eta_{\bar x, 2r}(t)|^p dz.
\end{align*}
 Therefore, we deduce that
\begin{align}
\label{eq:estimate-1}
 \int_{Q_{\bar z}(r, \lambda^{2-p} r^2)} |Du|^p dz
 &\leq  \frac{C}{r^p} \int_{Q_{\bar z}(2r,2\lambda^{2-p} r^2)}
 |u - u^\eta_{\bar x, 2r}(t)|^p dz\nonumber\\ 
 &\quad +  C\Bigg[ \int_{Q_{\bar z}(2r,2\lambda^{2-p} r^2)} h^p dz
 + \Big(\int_{Q_{\bar z}(2r,2\lambda^{2-p} r^2)} |f|^{\bar p'} dz\Big)^{\hat p}\Bigg]\nonumber\\
 &\leq  \frac{C}{r^p}
 \Big(\int_{Q_{\bar z}(4 r, 4 \lambda^{2-p} r^2)} |Du|^{\frac{np }{n+2}} dz\Big)
  \left(\sup_{|t-\bar t|<4 \lambda^{2-p} r^2 }\int_{B_{4 r}(\bar x)} 
  |u - u^\eta_{\bar x, 2r}(t)|^2 dx\right)^{\frac{p}{n+2}}\\
  &\quad + C \Bigg[\int_{Q_{\bar z}(2r,2\lambda^{2-p} r^2)} h^p dz 
  + \Big(\int_{Q_{\bar z}(2r,2\lambda^{2-p} r^2)} |f|^{\bar p'} dz\Big)^{\hat p}\Bigg],\nonumber
 \end{align}
where the last inequality follows from \cite[Lemma~3.3]{KiL}. We next estimate the supremum on the  right hand side. For this, we apply Lemma~\ref{lm:Caccio} for $\rho=4r $, $\tau_1 = 4 \lambda^{2-p} r^2$ and $\tau_2 =8 \lambda^{2-p} r^2$ to get
 \begin{align*}
 \sup_{|t-\bar t|<4 \lambda^{2-p} r^2}&\int_{B_{4 r}(\bar x)} |u(x,t) - u^\eta_{\bar x, 4r}(t)|^2 dx
 \leq \frac{C}{ \lambda^{2-p} r^2}
 \int_{Q_{\bar z}(8r, 8 \lambda^{2-p} r^2)} |u - u^\eta_{\bar x, 8r}(t)|^2 dz\\
 &+ \frac{C}{r^p} 
 \int_{Q_{\bar z}(8r, 8 \lambda^{2-p} r^2)} |u - u^\eta_{\bar x, 8 r}(t)|^p dz
 + C \Bigg[\int_{Q_{\bar z}(8r, 8 \lambda^{2-p} r^2)} h^p dz +
 \Big(\int_{Q_{\bar z}(8r, 8 \lambda^{2-p} r^2)} |f|^{\bar p'} dz\Big)^{\hat p}
 \Bigg].
 \end{align*}
 Using Poincar\'e inequality for functions in Sobolev spaces $W^{1,q}(B_{8r}(\bar x))$ ($q=2,p$) together with the fact
 \begin{align*}
 \Big(\int_{B_{4 r}(\bar x)} |u - u^\eta_{\bar x, 2r}(t)|^2 dx&\Big)^{\frac12}
 \leq \Big(\int_{B_{4 r}(\bar x)} |u - u^\eta_{\bar x, 4r}(t)|^2 dx\Big)^{\frac12}
 + |B_{4 r}(\bar x)|^\frac12 |u^\eta_{\bar x, 2r}(t)- u^\eta_{\bar x, 4r}(t)|\\
 &= \Big(\int_{B_{4 r}(\bar x)} |u - u^\eta_{\bar x, 4r}(t)|^2 dx\Big)^{\frac12}
 + 
 \frac{|B_{4 r}(\bar x)|^\frac12
 }{\int_{B_{2 r}(\bar x)} \eta_{\bar x, 2r}^p\, dx} \Big|
 \int_{B_{2 r}(\bar x)}[u(x,t)- u^\eta_{\bar x, 4r}(t)] \eta_{\bar x, 2r}^p dx \Big|\\
   &\leq C  \Big(\int_{B_{4 r}(\bar x)} |u - u^\eta_{\bar x, 4r}(t)|^2 dx\Big)^{\frac12},
 \end{align*}
 we infer that
 \begin{align*}
 &\sup_{|t-\bar t|<4 \lambda^{2-p} r^2}\int_{B_{4 r}(\bar x)} 
 |u(x,t) - u^\eta_{\bar x, 2r}(t)|^2 dx\\
 &\leq  C \lambda^{p-2}  \int_{Q_{\bar z}(8r, 8 \lambda^{2-p} r^2)} |Du |^2 dz 
 + C \Bigg[\int_{Q_{\bar z}(8r, 8 \lambda^{2-p} r^2)} (|Du|^p + h^p) dz
 +
 \Big(\int_{Q_{\bar z}(8r, 8 \lambda^{2-p} r^2)} |f|^{\bar p'} dz\Big)^{\hat p}\Bigg]\\
 &\leq  C \lambda^{p-2} |Q_{8r}^\lambda(\bar z)|^{\frac{p-2}{p}} \Big(\int_{Q_{8r}^\lambda(\bar z)}
 |Du |^p dz\Big)^{\frac{2}{p}} + C \Bigg[\int_{Q_{8r}^\lambda(\bar z)} (|Du|^p + h^p) dz
 +\Big(\int_{Q_{8r}^\lambda(\bar z)} |f|^{\bar p'} dz\Big)^{\hat p}\Bigg]\\
 &\leq C \lambda^p |Q_{8r}^\lambda(\bar z)|= C \lambda^2 r^{n+2}.
 \end{align*}
 This together with  \eqref{eq:estimate-1} yields
 \begin{align*}
 \fint_{Q^\lambda_r (\bar z)} |Du|^p dz 
 &\leq  C \lambda^{\frac{2p}{n+2}} 
\fint_{Q^\lambda_{4 r} (\bar z)} |Du|^{\frac{np }{n+2}} dz + C \Bigg[\fint_{Q^\lambda_{2 r} (\bar z)} 
h^p dz+ \frac{1}{|Q^\lambda_{2 r} (\bar z)|}
 \Big( \int_{Q^\lambda_{2 r} (\bar z)}|f|^{\bar p'}dz \Big)^{\hat p}\Bigg]\\
&\leq \frac12 \lambda^p + C \Big(\fint_{Q^\lambda_{4 r} (\bar z)} 
|Du|^{\frac{np }{n+2}} dz\Big)^{\frac{n+2}{n}}
+ C \Bigg[\fint_{Q^\lambda_{2 r} (\bar z)} 
h^p dz+ \frac{1}{|Q^\lambda_{2 r} (\bar z)|}
 \Big( \int_{Q^\lambda_{2 r} (\bar z)}|f|^{\bar p'}dz \Big)^{\hat p}\Bigg].
   \end{align*}
   It follows by using property b) that
   \begin{align*}
 \fint_{Q^\lambda_r (\bar z)} |Du|^p dz \leq   C
 \Bigg[ \Big(\fint_{Q^\lambda_{4 r} (\bar z)} |Du|^{\frac{np }{n+2}} dz\Big)^{\frac{n+2}{n}}
 + \fint_{Q^\lambda_{4 r} (\bar z)} h^p\, dz + \frac{1}{|Q^\lambda_{4 r} (\bar z)|}
 \Big( \int_{Q^\lambda_{4 r} (\bar z)}|f|^{\bar p'}dz \Big)^{\hat p}\Bigg].
   \end{align*}
Hence \eqref{eq:reverseHolder} is proved 
and  the proof of Theorem~\ref{thm:higherint} is complete.
\end{proof}

\begin{appendix}
  \section{A comparison principle}
 Let $\Omega_T :=\Omega\times (0,T)$ with  $T>0$ and  $\Omega$ being a bounded domain in $\R^n$, $n\geq 2$.
Let  $\K\subset \R$ be an open interval,  $
\A : \Omega_T \times \overline\K\times \R^n \longrightarrow \R^n$,
 $\Phi: \Omega_T \times \overline\K \to \R^n$ and $g,\, b: \Omega_T \times \overline\K \to \R$ be  Carath\'eodory maps.  We assume that 
 there exist constants
$\Lambda>0$ and  $1< p<\infty$ such that the following   conditions are satisfied  for a.e. $z\in \Omega_T$ and all $\xi,\eta\in\R^n$:
\begin{align}
&\big\langle  \A(z,u,\xi) -\A(z,u,\eta), \xi-\eta\big\rangle \geq 0\quad \, \, \forall u\in \overline\K,\label{a-1}\\ 
&u\in \R\mapsto g(z,u) \mbox{ is monotone nondecreasing}.
\label{a-2}
\end{align}
Also, there exist functions $K\in L^{p'}(\Omega_T)$ and $k\in L^1(0,T)$ such that
 \begin{align}
 &|\A(z,u_1,\xi)-\A(z,u_2,\xi)|  
 \leq   |u_1 - u_2| \Big(\Lambda |\xi|^{p-1} + K(z)\Big), \label{a-3}\\
 &|\Phi(z,u_1)-\Phi(z,u_2)|  \leq  |u_1 - u_2| K(z),\quad \mbox{and}
 \quad |b(z,u_1)-b(z,u_2)|  \leq  |u_1 - u_2| k(t)\label{a-4}
\end{align}
for all $u_1, u_2\in \overline\K$  with $|u_1-u_2|$ sufficiently small.

\begin{definition}\label{solution-apen}
 A map
 $
  u\in C((0,T); L^2(\Omega))\cap L^p(0,T; W^{1,p}(\Omega)) $
  is called a weak solution to 
 \begin{equation}
\label{mainPDE2}
u_t  =  \div \A(z, u,D u) +\div \Phi(z,u) -g(z,u) +b(z,u)  \quad \text{in}\quad \Omega_T.
\end{equation} 
 if $u(z)\in \overline{\K}\,\,$ for a.e. $z\in\Omega_T$ and
 \[
  \int_{\Omega_T} u \varphi_t\,dz = \int_{\Omega_T} \langle \A(z, u,D u), D \varphi\rangle\,dz
  +\int_{\Omega_T} \langle \Phi(z, u), D \varphi\rangle\,dz
  + \int_{\Omega_T} [g(z,u)-b(z,u)] \varphi\,dz
  \]
for every $\varphi\in C_0^\infty(\Omega_T)$.
\end{definition}

The following  result shows that  equation \eqref{mainPDE2} admits a comparison principle.

\begin{proposition}[comparison principle]\label{pro:compa}
Assume that $\A$, $\Phi$, $g$ and $b$ satisfy conditions \eqref{a-1}--\eqref{a-4}. 
 Let $u$ and $v$ be weak solutions to \eqref{mainPDE2} such that $u\leq v$ on $\partial_p \Omega_T$. Then
\[
u \leq v \quad \mbox{a.e. in}\quad \Omega_T.
\]
\end{proposition}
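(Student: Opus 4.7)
The plan is to subtract the weak formulations for $u$ and $v$, test the difference against a nonnegative Lipschitz approximation of $\mathrm{sgn}^+(u-v)$, and then pass to the limit to obtain a Gr\"onwall-type inequality for $\int_\Omega (u-v)_+\,dx$. Setting $w := u-v$, the hypothesis $u \leq v$ on $\partial_p \Omega_T$ gives $w_+ = 0$ both on $\partial\Omega \times (0,T)$ and at time $t=0$; this makes $h_\varepsilon(w)$, with $h_\varepsilon$ the approximation defined in \eqref{def:h_e}, a legitimate test function in the PDE satisfied by $w$ (modulo Steklov averaging in time, as in Remark~\ref{rm:Steklov}).

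After performing this test on the time interval $(0,\tau)$, the time-derivative term yields $\int_\Omega H_\varepsilon(w_+(x,\tau))\,dx$ where $H_\varepsilon(s) := \int_0^s h_\varepsilon$. On the right-hand side I would decompose the principal part via $\A(z,u,Du)-\A(z,v,Dv) = [\A(z,v,Du)-\A(z,v,Dv)] + [\A(z,u,Du)-\A(z,v,Du)]$. The first bracket paired with $Dw\,h_\varepsilon'(w)$ is nonnegative by the monotonicity \eqref{a-1}, so it can be dropped to the left; by the monotonicity of $g$ from \eqref{a-2} the term $-[g(z,u)-g(z,v)]h_\varepsilon(w)$ is nonpositive and is also discarded. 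What remains are three error terms: the $u$-Lipschitz piece of $\A$ controlled via \eqref{a-3}, the $\Phi$-term via \eqref{a-4}, and the $b$-term via \eqref{a-4}.

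The key observation for controlling the first two errors is that $h_\varepsilon'(w) = \varepsilon^{-1}\mathbf{1}_{\{0 < w < \varepsilon\}}$, so $|w|\,h_\varepsilon'(w) \leq 1$ on its support. Hence the Lipschitz-in-$u$ estimate \eqref{a-3} gives
\[
\Big|\int_0^\tau\!\!\int_\Omega \langle \A(z,u,Du)-\A(z,v,Du), Dw\rangle h_\varepsilon'(w)\,dz\Big|
\leq \int_0^\tau\!\!\int_{\{0<w<\varepsilon\}}\!\!\big(\Lambda|Du|^{p-1}+K(z)\big)\,|Dw|\,dz,
\]
and similarly $|w|K(z)|Dw|\,h_\varepsilon'(w) \leq K(z)|Dw|\,\mathbf{1}_{\{0<w<\varepsilon\}}$ for the $\Phi$-error. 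Since $K|Dw|,\,|Du|^{p-1}|Dw| \in L^1(\Omega_T)$ and $|\{0<w<\varepsilon\}| \to 0$ as $\varepsilon \to 0^+$, both terms vanish in the limit by dominated convergence. For the $b$-term, combining \eqref{a-4} with $0 \leq h_\varepsilon(w) \leq \mathbf{1}_{\{w>0\}}$ yields the clean bound $\big|\int_0^\tau\!\!\int_\Omega [b(z,u)-b(z,v)]h_\varepsilon(w)\,dz\big| \leq \int_0^\tau k(t)\int_\Omega w_+\,dx\,dt$.

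Letting $\varepsilon \to 0^+$, monotone convergence gives $H_\varepsilon(w_+) \to w_+$, so I arrive at $\int_\Omega w_+(x,\tau)\,dx \leq \int_0^\tau k(t)\int_\Omega w_+(x,t)\,dx\,dt$ for a.e.\ $\tau \in (0,T)$; since $k \in L^1(0,T)$ and $w_+(\cdot,0)=0$, Gr\"onwall's inequality forces $w_+ \equiv 0$, which is the claim. I expect the main technical obstacle to be the rigorous justification of $h_\varepsilon(w)$ as a test function given the low time regularity of weak solutions (handled via Steklov averages), together with ensuring that the gradient-error integrals really do vanish uniformly; both rely on $u,v \in L^p(0,T;W^{1,p}(\Omega))$ and on the local "sufficiently small" stipulation in \eqref{a-3}--\eqref{a-4}, which becomes operative on $\{0<w<\varepsilon\}$ once $\varepsilon$ is chosen small.
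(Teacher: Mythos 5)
Your proposal is correct and follows essentially the same route as the paper's own proof: test with $h_\e(u-v)$, discard the monotone $\A$- and $g$-terms via \eqref{a-1}--\eqref{a-2}, kill the gradient error terms using $|w|h_\e'(w)\le \mathbf{1}_{\{0<w<\e\}}$ together with integrability of $K|Dw|$ and $|D\cdot|^{p-1}|Dw|$, and close with Gr\"onwall. The only (immaterial) difference is that you freeze the $u$-argument at $v$ in the monotone bracket and evaluate the Lipschitz-in-$u$ remainder at $Du$, whereas the paper freezes at $u$ and evaluates the remainder at $Dv$.
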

\begin{remark}
By inspecting the arguments below, ones see that we in fact only need to assume that $u$ is a weak subsolution and $v$ is a weak supersolution.
\end{remark}
\begin{proof}
 For $\e>0$ small, we define $h_\e(s)$ as in  \eqref{def:h_e}.
Let us denote $\Omega_t =\Omega\times (0,t)$. By using $h_\e(u-v)$ as a test function in the equations for $u$ and $v$ and subtracting the resulting expressions, we obtain:
\begin{align*}
&\int_{\Omega}\Big(\int_0^{(u-v)_+(x,t)}  h_\e(s)\, ds \Big)dx 
+\int_{\Omega_t}h_\e'(u-v)\langle \A(z, u,D u)-\A(z, u,D v) , Du -Dv\rangle\, dz\\
&+\int_{\Omega_t}\Big[g(z,u)-g(z,v)\Big] h_\e(u-v)
\, dz\\
&=  \int_{\Omega_t}h_\e'(u-v)\langle \A(z, v,D v)-\A(z, u,D v)  , Du -Dv\rangle\, dz\\ &-\int_{\Omega_t}h_\e'(u-v)\langle \Phi(z, u)-\Phi(z, v) , Du -Dv\rangle\, dz 
+\int_{\Omega_t}\Big[b(z,u)-b(z,v)\Big] h_\e(u-v)
\, dz
\end{align*}
for all $t\in (0,T)$. Since the second and third terms on the left-hand side are nonnegative thanks to \eqref{a-1}--\eqref{a-2}, we deduce that
\begin{align}\label{unique-ineq}
&\int_{\Omega}\Big(\int_0^{(u-v)_+(x,t)}  h_\e(s)\, ds \Big)dx \nonumber\\
&\leq   \frac{1}{\e} \int_0^t \int_{\Omega\cap \{0<u-v<\e\}}(u-v)\Big(\Lambda |Dv|^{p-1} + 2 K\Big) |Du -Dv|\, dz
+\int_{\Omega_t}k(\tau) |u-v| h_\e(u-v)
\, dx d\tau \nonumber\\
&\leq   \int_0^t \int_{\Omega\cap \{0<u-v<\e\}}\Big(\Lambda |Dv|^{p-1} + 2K\Big) |Du -Dv|\, dz
+\int_{\Omega_t}k(\tau) |u-v| h_\e(u-v)
\, dx d\tau. 
\end{align}
As $\e \to 0^+$, we have 
\[
\int_{\Omega}\Big(\int_0^{(u-v)_+(x,t)}  h_\e(s)\, ds \Big)dx
\longrightarrow \int_{\Omega} (u-v)_+(x,t)\,dx.
\]
Moreover, the first term on the right-hand side tends to zero and the last term tends to
\[
\int_{\Omega_t}k(\tau) |u-v| \sgn^+(u-v)
\, dx d\tau =\int_0^t\int_{\Omega} k(\tau) (u-v)_+
\, dx d\tau.
\]
Thus if we denote $m(\tau) :=\int_{\Omega} (u-v)_+(x,\tau)\,dx$, then by letting $\e\to 0^+$ in \eqref{unique-ineq} we obtain
\[
m(t) \leq \int_0^t k(\tau) m(\tau)\, d\tau\quad \forall t\in (0,T).
\]
Therefore, it follows from the Gr\"onwall’s inequality that $m(t)\leq 0$ for every $t\in (0,T)$. We then conclude that 
$\int_{\Omega_T} (u-v)_+(x,t)\,dx dt=0$,
and hence $u\leq v$ for a.e. in $\Omega_T$. The proof is complete.
\end{proof}
\begin{remark}\label{existence+uniqueness} 
We note that
 the comparison principle in Proposition~\ref{pro:compa} 
 together with the standard method for proving existence using Galerkin  approximation  
(see \cite[pages~466--475]{La} and \cite{L,ZL}) ensures  that: for any
$u\in  L^\infty(Q_3)\cap L^p(-9,9; W^{1,p}(B_3)) $ satisfying $u(z)\in \overline
\K\,$  for a.e. $z\in Q_3$, the Dirichlet problem 
\begin{equation*}
\left \{
\begin{array}{lcll}
v_t &=&\div \A(z, v,D v)  \quad &\text{in}\quad Q_3, \\
v & =& u\quad &\text{on}\quad \partial_p Q_3
\end{array}\right.
\end{equation*}
has a  weak solution in the sense of Definition~\ref{solution-apen}.
\end{remark}
\end{appendix}


\begin{thebibliography}{10}

\bibitem{AM} E.~Acerbi and G.~Mingione, {\it Gradient estimates for a class of parabolic systems.}
 Duke Math. J.  136  (2007),  no. 2, 285--320.

 
\bibitem{Bo}  V.~B\"ogelein, {\it Global Calder\'on-Zygmund theory for nonlinear parabolic systems.}
 Calc. Var. Partial Differential Equations  51  (2014),  no. 3-4, 555--596.

 
 \bibitem{BOR} S.-S.~Byun, J.~Ok, and S.~Ryu, {\it Global gradient estimates for general nonlinear parabolic equations in
 nonsmooth domains.}
 J. Differential Equations  254  (2013),  no. 11, 4290--4326.
 
\bibitem{BW1}  S.-S.~Byun and L. Wang, {\it Parabolic equations in Reifenberg domains.}
Arch. Ration. Mech. Anal. 176 (2005), no. 2, 271--301.


\bibitem{CP} L.~Caffarelli and I.~Peral, 
{\it    On $W^{1,p}$ estimates for elliptic equations in divergence  form.}
 Comm. Pure Appl. Math.  51  (1998),  no. 1, 1--21.
 
 
 
 
 
\bibitem{D2} E.~DiBenedetto, {\it Degenerate parabolic equations.} Universitext. Springer-Verlag, New York, 1993.

\bibitem{DF1} E.~DiBenedetto and A.~Friedman, {\it Regularity of solutions of nonlinear degenerate parabolic 
systems.}
 J. Reine Angew. Math.  349  (1984), 83--128.

\bibitem{DF2} E.~DiBenedetto and A.~Friedman, {\it H\"older estimates for nonlinear degenerate parabolic systems.}
 J. Reine Angew. Math.  357  (1985), 1--22.
 
\bibitem{DM} E. DiBenedetto and  J.~Manfredi, {\it On the higher integrability of the gradient of weak solutions of
 certain degenerate elliptic systems.}
 Amer. J. Math.  115  (1993),  no. 5, 1107--1134.
 

\bibitem{DMS}
F.~Duzaar, M.~Mingione, and K.~Steffen,  {\it Parabolic systems with polynomial growth and regularity.}
 Mem. Amer. Math. Soc.  214  (2011),  no. 1005, x+118 pp.
 
 
\bibitem{I} T.~Iwaniec, {\it Projections onto gradient fields and $L^p$-estimates for
 degenerated elliptic operators.}
 Studia Math.  75  (1983),  no. 3, 293--312.
 
 \bibitem{KiL} J.~Kinnunen and J.~Lewis, {\it Higher integrability for parabolic systems of $p$-Laplacian type.}
 Duke Math. J.  102  (2000),  no. 2, 253--271.
 
 \bibitem{KZ} J.~Kinnunen and S.~Zhou, {\it A local estimate for nonlinear equations with discontinuous
 coefficients.}
 Comm. Partial Differential Equations  24  (1999),  no. 11-12, 2043--2068.
 

 \bibitem{KuM1}
T.~Kuusi and G.~Mingione, {\it New perturbation methods for nonlinear parabolic problems.}
 J. Math. Pures Appl. (9)  98  (2012),  no. 4, 390--427.
 
 \bibitem{KuM2}
 T.~Kuusi and G.~Mingione, {\it Gradient regularity for nonlinear parabolic equations.}
 Ann. Sc. Norm. Super. Pisa Cl. Sci. (5)  12  (2013),  no. 4, 755--822.
 
 
 
 \bibitem{La} O.~ Ladyzenskaja, V.~ Solonnikov, and N.~ Ural\'tceva,
{\it Linear and quasilinear equations of parabolic type.}  Translations of Mathematical Monographs, Vol. 23,  
American Mathematical Society, 1968.

\bibitem{LMV}
C.~Leone, M.~Misawa, and A.~Verde, {\it The regularity for nonlinear parabolic systems of p-Laplacian type
 with critical growth.}  J. Differential Equations  256  (2014),  no. 8, 2807--2845.
 
 \bibitem{HNP1} L.~Hoang, T.~Nguyen, and T.~Phan,
 {\it Gradient estimates and global existence of smooth solutions to a cross-diffusion system.}
    SIAM J. Math. Anal.  47  (2015),  no. 3, 2122--2177. 
   


 \bibitem{L} J.-L. Lions.
 {\it  Quelques m\'ethodes de r\'esolution des probl\`emes aux limites non
 lin\'eaires.}
 Dunod,  1969.

 \bibitem{ME}
 N.~Meyers and A.~Elcrat, {\it Some results on regularity for solutions of non-linear elliptic systems
 and quasi-regular functions.}
 Duke Math. J.  42  (1975), 121--136.
 
 \bibitem{Mi1} M.~Misawa, {\it Partial regularity results for evolutional p-Laplacian systems with
 natural growth.}
 Manuscripta Math.  109  (2002),  no. 4, 419--454.

 
\bibitem{Mi2} M.~Misawa, {\it $L^q$ estimates of gradients for evolutional $p$-Laplacian
 systems.}
 J. Differential Equations  219  (2005),  no. 2, 390--420.

 
 \bibitem{NP} T. ~ Nguyen and T. ~Phan, {\it
 Interior gradient estimates for quasilinear elliptic equations.} 
 Calc. Var. (2016) 55: 59. doi:10.1007/s00526-016-0996-5.
 
 
 
 
 \bibitem{T} P.~Tolksdorf, {\it Regularity for a more general class of quasilinear elliptic
 equations.}
 J. Differential Equations  51  (1984),  no. 1, 126--150.
 
 
 


 \bibitem{ZL} W.~Zou and J.~Li, {\it Existence and uniqueness of bounded weak solutions for some nonlinear parabolic problems.}
 Bound. Value Probl.  2015, 2015:69.
 
 \end{thebibliography}
\end{document}